\renewcommand{\ge}{\geqslant}
\renewcommand{\le}{\leqslant}
\renewcommand{\Re}{\text{Re}}
\newcommand*\linenomathpatch[1]{%
  \cspreto{#1}{\linenomath}%
  \cspreto{#1*}{\linenomath}%
  \csappto{end#1}{\endlinenomath}%
  \csappto{end#1*}{\endlinenomath}%
}
\newcommand*\linenomathpatchAMS[1]{%
  \cspreto{#1}{\linenomathAMS}%
  \cspreto{#1*}{\linenomathAMS}%
  \csappto{end#1}{\endlinenomath}%
  \csappto{end#1*}{\endlinenomath}%
}
  \let\linenomathAMS\linenomathWithnumbers
  \patchcmd\linenomathAMS{\advance\postdisplaypenalty\linenopenalty}{}{}{}
  \let\linenomathAMS\linenomathNonumbers
\definecolor{refkey}{rgb}{0,0.8,0}
\def\showlabelsetlabel#1{\raise1.5ex\hbox{{\showlabelfont #1}}}
\renewcommand{\showlabelfont}{\color{green}}
\def\SL@eqntext#1{\rlap{\quad\SL@margintext{#1}}}
\def\SL@eqnlefttext #1{\hbox to 0pt{\kern 60pt 
\llap{\SL@margintext{#1}\quad}\hss}}
\newcommand{\p}{{\mathfrak p}}
\newcommand{\PP}{{\mathcal P}}
\newcommand{\Q}{\mathbb{Q}}
\newcommand{\Z}{\mathbb{Z}}
\newcommand{\CC}{\mathbb{C}}
\newcommand{\ind}{{\mathbf 1}}
\newcommand{\eps}{\varepsilon}
\newcommand{\rarr}{{\rightarrow}}
\newcommand{\mymod}[2]{{ #1 \: (\bmod \: {#2})}}
\newcommand{\myvec}[1]{{ \langle #1 \rangle }}
\newcommand{\biquad}{{\Q(\sqrt{d_1}, \sqrt{d_2})}}
\newtheorem{thm}{Theorem}
\newtheorem{lem}{Lemma}
\newtheorem{prop}{Proposition}
\newtheorem{cor}{Corollary}
\theoremstyle{remark}
\newtheorem{quest}{Question}
\newtheorem{remm}{Remark}
\newtheorem*{ack}{Acknowledgments}
\newcommand\gal{{ \mbox{\rm Gal}}}
\newcommand{\trho}{{\tilde{\rho}}}
\newcommand{\ov}[1]{{\overline{{#1}}}}
\DeclareMathOperator{\sign}{sign}
\DeclareMathOperator{\disc}{disc}
\DeclareMathOperator{\odd}{odd}
\DeclareMathOperator{\even}{even}
\newcommand{\flatsum}{\mathop{\sum{\vphantom{\sum}}^\flat}}
\newcommand{\natsum}{\mathop{\sum{\vphantom{\sum}}^\natural}}
\newcommand{\primesum}{\mathop{\sum{\vphantom{\sum}}'}}
\renewcommand{\vec}[1]{\mathbf{#1}}
\DeclareFontFamily{U}{wncyr}{}
\DeclareFontShape{U}{wncyr}{m}{n}{%
   <5> <6> <7> <8> <9> gen * wncyr
   <10> <10.95> <12> <14.4> <17.28> <20.74>  <24.88>wncyr10}{}
\DeclareFontShape{U}{wncyr}{bx}{n}{%
   <5> <6> <7> <8> <9> gen * wncyb
   <10> <10.95> <12> <14.4> <17.28> <20.74>  <24.88>wncyb10}{}
\DeclareSymbolFont{cyr}{U}{wncyr}{m}{n}
\DeclareMathSymbol{\sha}{\mathalpha}{cyr}{"58}
\begin{document}


\title{Counting biquadratic number fields with
  \\
  quaternionic and dihedral extensions}


\pagestyle{fancy}
\fancyhead[LO]{{\sc Louis M.~Gaudet and Siman Wong. \today. }}
\fancyhead[LE]{{\sc Counting biquadratic fields.  \today. }}
\fancyhead[R]{\thepage}
\fancyfoot[C]{}

\author{Louis M.~Gaudet and Siman Wong \\ \today}



 \begin{abstract}
We establish asymptotic formulae for the number of biquadratic number fields of bounded discriminant that can be embedded into a quaternionic or a dihedral extension.  To prove
these results, we express the solvability of these inverse Galois problems
in terms of Hilbert symbols, and then apply a method of Heath-Brown to bound
sums of linked quadratic characters.
 \end{abstract}






\maketitle


    %
   

\section{Introduction}

In this paper we study how often biquadratic number fields can be
embedded in quaternionic and respectively dihedral extensions.
To explain the reason for focusing on these two particular groups, we
first consider this question in the context of the inverse Galois problem.
We then state our main result, give an outline of the proof, and comment on related results. 
In the appendix, 
we revisit this extension problem from the point of view of Galois
representations and discuss further questions that arise.

\subsection{From inverse Galois problems to counting problems}
Let $K$ be a field, and let $G$ be a finite group.  The \textit{inverse Galois problem} (for $G$
over $K$) asks if there exists a Galois extension of $K$ with Galois
group $G$.  Going further, let $N$ be a normal subgroup of  $G$, and let
$
L/K
$
be a finite Galois extension with Galois group $G/N$.  The
\textit{extension problem}
for the pair $(G, L/K)$ asks if there exists a Galois extension of $K$ with 
Galois group $G$ that contains $L$ as an intermediate subfield. If such an extension exists, we will say that $L/K$ admits a $G$-extension. For general
finite groups, the inverse Galois problem is not resolved even for
$
K=\Q
$;
Shafarevich has shown that the inverse Galois problem has a positive answer for solvable groups over any number
field \cite{shaf1, shaf2}.  

Even when the inverse Galois problem is has a positive answer
for $G$ over $K$, the more restrictive extension problem can still fail for a
given
$L/K$.  For example, letting
$
d\not= 1
$
be a square-free integer, we have by \cite[Theorem 1.2.4]{serre:galois} that
\begin{equation}
  \text{$
    \Q(\sqrt{d})
    $
    is contained in a $C_4$-extension of $\Q$ if and only if
    $
    d=u^2 + v^2
    $
    with $u, v\in\Q$.}
     \label{sq}
\end{equation}
Here $C_m$ denotes the cyclic group of order $m$. So while there are asymptotically $cX$ quadratic fields $K$ (up to isomorphism) with $\disc(K)\le X$ (where $c>0$ is an absolute constant), there are only asymptotically $c'X(\log X)^{-1/2}$ of them that admit a $C_4$-extension (here $c'>0$ is also an absolute constant; this asymptotic formula is originally due to Landau \cite{landau}). In particular, the density of quadratic
number fields that admit a $C_4$-extension is zero.

On the other hand, an exercise in class field theory shows that for any prime $p$, every
quadratic extension of
$\Q$
is contained in infinitely many  Galois extensions with Galois group $D_p$, the dihedral group 
of order $2p$ \cite[Theorem I.2.1]{dihedral}.
So the next non-trivial extension problem over $\Q$ involves the quaternion
group
$Q_8$ and the dihedral group $D_4$:  each group has a $C_2$ center with
$
C_2\times C_2
$
quotient, 
and the extension problem now asks if a given biquadratic extension of $\Q$ can
be extended to a $Q_8$- or a $D_4$-extension.
These extension problems are not always solvable; there are classical
criteria for their solvability
due to Witt and others (see Section \ref{sec:criterion}).  In this paper we
study how often these criteria hold.

\begin{remm}
  Here is another reason why the groups
  $
  Q_8, D_4
  $
  are special for extension problems.
For any prime $p$, there are precisely two non-Abelian groups of order
$p^3$  \cite[Exercise~5.3.6]{robinson}.  Both are central extensions of $C_p$
by
$
C_p\times C_p
$.
Suppose $p$ is odd, and let $G$ be one of these non-Abelian groups of order
$p^3$.
For any number field $K$, a standard  Galois cohomology argument shows that
any
$C_p\times C_p$-extension of $K$ can be extended to a Galois extension over
$K$ with Galois group $G$, see~\cite[Lemma~2.1.5]{serre:galois}.  Thus the
enumerative question in this paper is meaningful for non-Abelian groups of order
$p^3$ only when $p=2$.  And the fact that not every biquadratic field can be extended
to a $Q_8$- or $D_4$-extension  means that the $p>2$ hypothesis
in \cite[Lemma~2.1.5]{serre:galois} is critical.
\end{remm}

\subsection{Main result}
By the conductor-discriminant formula, the discriminant of a biquadratic
number field is the product of the discriminants of its quadratic subfields.
Thus we expect the number of biquadratic fields of discriminant
$
\le X
$
to be
$
\sim c \sqrt{X}\log^2 X
$
for some constant $c>0$.  This was made precise by Baily \cite[Theorem 8]{baily},
but there was a mistake in his calculation of the asymptotic constant.  This
was rectified by M\"aki \cite{maki}; see also Cohen--Diaz~y~Diaz--Olivier
     \cite[p.~582]{cohen-disc} and Rome \cite[Theorem 1]{rome}.
To state this result, let us say that a biquadratic field is of sign~$+$ if it is totally real, otherwise
we will say that it is
of sign~$-$, and for $\sigma\in\{\pm\}$ we define
\[
  B^\sigma(X)\;:=\;\#\big\{\text{biquadratic fields of discriminant $\le X$ and sign $\sigma$}\big\}.
\]
Then we have
\begin{equation}
    B^\sigma(X)
    \;=\;
    \frac{23     c^\sigma}
         {960}
    \sqrt{X} (\log X)^2
    \prod_p
      \Bigl(         
         \Bigl(        
           1 - \frac{1}{p}
         \Bigl)^3
         \Bigl(        
           1 + \frac{3}{p}
         \Bigl)
      \Bigl)
    \;+\;
    O(\sqrt{X} \log X),
                                  \label{px}         
\end{equation}
where
$
c^+ = 1/4
$
and
$
c^- = 3/4
$.

Now for $G$ being either $Q_8$ or $D_4$, we define
\[
  B^\sigma(X;G)\;:=\;\#\bigg\{ \!
    \begin{array}{c}
        \text{biquadratic fields of discriminant $\le X$}
        \\
        \text{and sign $\sigma$ that admit a $G$-extension}
      \end{array}\!
  \bigg\}.
\]
Our main result in this paper is the following.

\begin{thm}
  \label{thm:density}
For
$
\sigma\in\{\pm\}
$
and $G=Q_8$ or $D_4$, we have
  \[
  {B}^\sigma(X;G)
  \;=\;
  \frac{c^\sigma(G)}{\sqrt{2\pi}}\sqrt{X}(\log X)^{1/2}
    \prod_p
      \Bigl(
        \Bigl(
          1 - \frac{1}{p}
        \Bigr)^{3/2}
        \Bigl(
          1 + \frac{3}{2p}
        \Bigr)
      \Bigr)
    \;+\;
    O\bigl(  \sqrt{X} (\log X)^{1/4}\bigr),
  \]
where
\[
   c^+(Q_8) = \frac{25}{168},
   \quad
   c^-(Q_8) = 0,
   \quad
   c^+(D_4) = \frac{33}{56},
   \quad\text{and}\quad
   c^-(D_4) = \frac{33}{28}.
\]
\end{thm}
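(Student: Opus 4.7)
The approach, foreshadowed in the abstract, is to convert the two embedding problems into local Hilbert symbol identities, expand the resulting indicator as a sum of Jacobi symbols, and estimate the oscillating part by a character-sum method of Heath-Brown. By Witt's classical theorem, $\Q(\sqrt{a},\sqrt{b})$ embeds into a $Q_8$-extension of $\Q$ iff $(a,b)_v \cdot (-1,a)_v \cdot (-1,b)_v = 1$ at every place $v$; an analogous (strictly weaker) local Hilbert symbol criterion is classically known for the $D_4$-embedding problem. I would state both criteria uniformly as a condition ``$\prod_v \chi_v(a,b) = 1$'' over an explicit finite set of nontrivial places --- namely $\infty$, $2$, and the primes dividing $ab$ --- so that the subsequent counting argument runs uniformly for both groups.

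Each biquadratic field corresponds uniquely to an unordered pair $\{d_1, d_2\}$ of distinct nontrivial squarefree integers; the third quadratic subfield is then $\Q(\sqrt{d_3})$ with $d_3 = d_1 d_2 / \gcd(d_1,d_2)^2$. Writing $d_1 = gm$, $d_2 = gn$ with $g,m,n$ pairwise coprime and squarefree, bilinearity of the Hilbert symbol together with quadratic reciprocity convert the extension condition into explicit sign/congruence conditions at $2$ and $\infty$ multiplied by a product of Jacobi symbols linking $g$, $m$, and $n$. Writing the resulting $\{0,1\}$-indicator as $\tfrac12(1+\varepsilon)$ splits $B^\sigma(X;G)$ into a main term (the ``$1$'') and an oscillating error (the $\varepsilon$).

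The main term counts squarefree triples $(g,m,n)$ with $d_1 d_2 d_3 \le X$, prescribed archimedean sign, and prescribed residues mod $8$. A Perron/contour argument parallel to the derivation of~(\ref{px}) --- the extra Hilbert symbol conditions act as independent quadratic residue constraints that depress the main power of the logarithm from $(\log X)^2$ down to $(\log X)^{1/2}$ --- yields the stated growth rate with Euler factor $(1-1/p)^{3/2}(1+3/(2p))$. The constants $c^\sigma(G)$ come out of a finite local-density calculation at $2$ and $\infty$. The vanishing $c^-(Q_8) = 0$ reflects the classical fact that no imaginary biquadratic field admits a $Q_8$-extension: complex conjugation, if it acts nontrivially on a $Q_8$-extension of $\Q$, must be the unique element of order $2$ in $Q_8$, whose fixed field is a totally real biquadratic.

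The main obstacle is controlling the $\varepsilon$-contribution, a sum of linked Jacobi symbols of the shape
\[
  \sum_{\substack{g,m,n \\ \text{pairwise coprime, squarefree}}}
  \Bigl(\tfrac{m}{n}\Bigr) \Bigl(\tfrac{g}{mn}\Bigr) \cdot (\text{smooth local weights}),
\]
with the summation constrained by $d_1 d_2 d_3 \le X$. Applying Heath-Brown's quadratic large sieve for sums of Jacobi symbols over squarefree moduli should deliver the required $O\bigl(\sqrt{X}(\log X)^{1/4}\bigr)$ bound, the $(\log X)^{1/4}$ saving coming from orthogonality of quadratic characters. The delicate point is to keep the squarefree and $2$-adic restrictions compatible with the smoothing in Heath-Brown's method, so that neither the cancellation in $\varepsilon$ nor the precise Euler factor in the main term is corrupted in the bookkeeping.
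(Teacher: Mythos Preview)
Your high-level strategy---Hilbert symbol criteria, expansion into Jacobi symbols, Heath-Brown-type character sums---matches the paper exactly. But the structural decomposition you describe has a genuine gap that would prevent the argument from going through as written.

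The indicator function for the embedding is \emph{not} of the form $\tfrac12(1+\varepsilon)$ with a single $\varepsilon$. Witt's criterion demands that the Hilbert symbol condition hold at \emph{every} place, so the indicator is a product $\prod_{p\mid d_1d_2}\tfrac12\bigl(1+\varepsilon_p\bigr)$ times a factor at~$2$ and~$\infty$. Expanding this product gives a sum of $2^{\omega(D_1D_2D_3)}$ terms, each weighted by $2^{-\omega(D_1D_2D_3)}$ and carrying a product of Jacobi symbols indexed by a choice of divisor $D_{i1}\mid D_i$ for each $i$ (so six variables $D_{ij}$, not three). Your proposed ``main term from the $1$'' would count triples with only sign and mod-$8$ constraints, which yields $\sqrt{X}(\log X)^2$---the same order as the full biquadratic count---not $(\log X)^{1/2}$. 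The log power is not depressed by ``independent quadratic residue constraints'' on the variables; it emerges from the $2^{-\omega}$ weight after the Heath-Brown analysis has isolated the surviving configurations.

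Concretely, in the paper's setup the Heath-Brown machinery does not merely bound an error: it is applied to the \emph{entire} expanded sum $\sum_{\vec D}2^{-\omega(\vec D)}g(\vec D)$ and shows that the only non-negligible contribution comes from tuples where three of the six $D_{ij}$ equal~$1$ (namely $D_{11}=D_{21}=D_{31}=1$ or $D_{12}=D_{22}=D_{32}=1$), so that $g(\vec D)=1$. The main term is then a three-variable sum with weight $2^{-\omega}$, and Landau--Selberg--Delange gives the factor $(\log X)^{3/2-1}=(\log X)^{1/2}$ together with the Euler product. The stated $O(\sqrt{X}(\log X)^{1/4})$ error comes not from a large-sieve saving but from the borderline terms in this analysis (few large variables, handled by Siegel--Walfisz and a combinatorial bound). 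Finally, the $D_4$ case is not ``strictly weaker'' in any simple way: a biquadratic field may admit $D_4$-extensions cyclic over different quadratic subfields, so one needs three separate Hilbert-symbol criteria and an inclusion--exclusion over them, which introduces auxiliary terms (involving the condition that certain variables have all prime factors $\equiv 1\pmod 4$) that must be shown to contribute only to the error.
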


In particular, this theorem shows that the density of biquadratic number fields that admit a $G$-extension is zero for both $G=Q_8$ and $D_4$.

\subsection{Outline of the paper}
\label{sec:questions}
In Section \ref{sec:parametrize} we parameterize biquadratic fields in a way (via ``admissible triples'' of squarefree integers) that will be amenable for
subsequent analysis.
In Section \ref{sec:criterion} we first recall the
classical criteria for a given biquadratic field in characteristic $\not=2$
to admit a 
$Q_8$- and respectively $D_4$-extension, and then with $\Q$ as the ground
field, we
rewrite these criteria
in terms of quadratic symbols.  In Section \ref{sec:char} we use these
criteria to develop formulae for characteristic functions that detect when a given field admits a $G$-extension.
In Section \ref{sec:average}, we apply these characteristic functions to the
parametric families of biquadratic fields from Section \ref{sec:parametrize},
turning the counting problem in Theorem \ref{thm:density} into sums of
``linked'' quadratic characters.
Character sums of similar shape appear in the work of Heath-Brown on the average size
of Selmer groups for the congruent number problem \cite{heath}. In the rest of
the paper, we adapt Heath-Brown's techniques to estimate these sums, from which
Theorem \ref{thm:density} follows.  In the appendix, we revisit our extension
problem from the point of view of Galois representations, and we discuss further
questions that arise.


\subsection{Remarks on the literature}
After we completed this paper, we learned of the works of
%
%
Fouvry--Luca--Pappalardi--Shparlinski \cite{fouvry}, 
%
%
Rohrlich \cite{rohrlich-dih-type, rohrlich-quat, rohrlich-quat2, rohrlich-dihedral}, and
Rome \cite{rome},
on
similar enumerative questions about biquadratic fields.
%
%
To understand the connections between this paper and the works of
  Rohrlich we need to first recast our results in the context of Galois
  representations; we will discuss that in the appendix.
  For the rest of this section we will comment on the works of
  Fouvry et al.~and of Rome.  We begin with the
work of Rome.

We say that a number field $K$ \textit{satisfies the Hasse norm principle}
(HNP) if for each rational
$
r\in\Q,
$
$r$ is the norm of an element in $K$ if and only if $r$ is the
norm of an element in every completion of
$K$.
The celebrated Hasse norm theorem \cite[p.~185]{cassels-frohlich} says
that cyclic extensions of $\Q$ satisfy the HNP. On the other hand, there are explicit examples of
biquadratic fields for which the HNP fails \cite[Exercises~5.3, 5.4]{cassels-frohlich}.
Rome \cite{rome} shows that there are
\begin{equation}
   \frac{1}{3\sqrt{2\pi}}
   \sqrt{X}(\log X)^{1/2}
   \prod_p
     \Big(\Bigl(
       1 - \frac{1}{p}
     \Bigr)^{3/2}
     \Bigl(
       1 + \frac{3}{2p}
     \Bigr)\Big)
  \;+\;
  O(\sqrt{X})            \label{rome}
\end{equation}
biquadratic fields of discriminant
$
\le X
$
for which the HNP fails. Recalling (\ref{px}), we see that Rome's result shows that almost all biquadratic
fields satisfy the HNP.

Rome expresses the \textit{failure} of the HNP for
$
\Q(\sqrt{d_1}, \sqrt{d_2})
$
in terms of quadratic symbols in a similar way to our criteria for the
\textit{existence} of $Q_8$- and $D_4$-extensions containing 
$
\Q(\sqrt{d_1}, \sqrt{d_2})
$,
cf.~\cite[Lemma 2.1]{rome} versus our Lemma \ref{lem:q8} and \ref{lem:d4}.
Thus it is not surprising that his estimate (\ref{rome}) is of the same
order of magnitude as ours---indeed the Euler products that appear in the main term are also the same. 

After converting his criteria into a counting function, Rome arrives at a
character sum similar to ours. To evaluate this sum he follows the approach of Friedlander--Iwaniec in \cite{iwaniec}, where they establish asymptotic formulae for the number of pairs $(a,b)$ up to given bounds such that the ternary quadratic form $ax^2+by^2-z^2$ has a nontrivial rational zero. We instead choose to follow the approach of Heath-Brown in \cite{heath}, which is easier to adapt to our particular situation. Any apparent differences in the approaches of Friedlander--Iwaniec and Heath-Brown are indeed superficial---they make different arrangments in how they handle the sums, but the main analytic inputs are the same: the Siegel--Walfisz theorem, estimates for bilinear forms involving the Jacobi symbol, and the Landau--Selberg--Delange method.

While Rome's criteria are similar to ours, there is a key difference:
the failure of the HNP depends only on the prime divisors of the $d_i$, but in our
setting the sign of the
$d_i$
also plays a role. This necessitates a parametrization of the biquadratic fields
that is similar to but more refined than the one in \cite{rome}, see~Section
\ref{sec:parametrize}.
Because of this and other particular complications in our setting (e.g.~Remark \ref{rem:d4-unique-cyclic}) we must work with many character sums, each corresponding to a given
Galois type and signature of the $d_i$. Much of our effort lies in wrangling
these different expressions to a ``canonical form'' so we only have to apply the
Heath-Brown machinery once; see~Propositions \ref{prop:fhq8} and \ref{prop:fhd4}.
In particular, there seems to be no simple way of repurposing Rome's estimates for needs, so we opt for doing our own arrangements and estimates.

Next
we turn to \cite{fouvry}.  Motivated by Malle's conjectures
\cite{malle1, malle2}, Fouvry et al. give asymptotic formulae for the
number of biquadratic extensions of
$\Q$ that can be extended to
$Q_8$-, $D_4$-
and respectively
$
C_4\times C_2
$-extensions; this is exactly the same problem we study in this paper.
However, instead of ordering the biquadratic fields by their
discriminants (as in Malle's conjectures, in Rome's work, and in this paper), in
\cite{fouvry} they work with the sets 
\begin{align*}
  \mathcal F(T)
  &:=
  \{ (a,b):  \text{$a, b$  distinct, square-free integers $1 < a, b \le T$} \},
  \\
  \mathcal F(T; G)
  &:=
  \Bigl\{
    (a,b)\in \mathcal F(T):
       \begin{array}{l}
         \text{$
               \Q(\sqrt{a}, \sqrt{b})
               $
               can be extended to a Galois}
         \\
         \text{extension over $\Q$ with Galois group $G$}
       \end{array}
  \Bigr\},
\end{align*}
and they show that
\begin{align*}
\label{fouvry-results}
   \#\mathcal F(T)
   =
   \frac{36}{\pi^4}T^2 + O(T^{3/2}),
         \qquad
   \#\mathcal F(T; G)
   =
   \Bigl(
     \frac{c(G)}
          {\pi^3}
    \prod_p
      \Bigl(
         1 + \frac{1}{2p(p+1)}
      \Bigr)
      +
      o(1)
   \Bigr)
   \frac{T^2}{\log^2 T},
\end{align*}
where $c(D_4) = 33$ and $c(Q_8) = 7$.  For
$
(a,b)\in\mathcal F(T)
$,
the field discriminant of
$
\Q(\sqrt{a}, \sqrt{b})
$
is
$
2^m (ab/\gcd(a,b))^2
$
for some $0\le m\le 6$ depending on $a,b$ modulo $4$. Thus it would be very difficult (likely impossible) to directly derive our results from the ones in \cite{fouvry} without effectively reproving them from scratch. Moreover, the set
$
\mathcal F(T)
$
does not cover
complex biquadratic fields, which (as we have mentioned above) introduce additional arithmetic complications.
In summary, the results in \cite{fouvry} are not equivalent or convertible
to ours.
We also  note that \cite{fouvry} makes use of the aforementioned methods of
Friedlander--Iwaniec \cite{iwaniec}.

The work of Fouvry et al. contains an additional result not considered in our
paper, namely they
determine an asymptotic formula for
$
\mathcal F(T; C_4\times C_2)
$.
A straightforward adaptation of our methods could be used to prove a $C_4\times C_2$-analogue of Theorem
\ref{thm:density} by using 
(\ref{sq}) in place of our criterion for $Q_8$- and $D_4$-extensions.
But as we will explain in the appendix, our interest in $Q_8$ and $D_4$-extensions of biquadratic fields lies in their connection with
irreducible Galois representations, and 
$
C_4\times C_2
$-extensions do not give rise to such representations. Therefore we do not pursue this result here.

\section{Parameterizing the biquadratic fields}
     \label{sec:parametrize}

Let $X$ be a large parameter going to infinity, and define
\[
\mathcal{B}(X)\;:=\;\{\text{biquadratic fields of discriminant $\le X$}\}.
\]
(Here and throughout we consider fields up to isomorphism; equivalently, we work in a fixed algebraic closure $\overline{\mathbb{Q}}$ of $\mathbb{Q}$.)
Our goal in this section is to parameterize this set of biquadratic fields in a
way that is amenable to the analytic treatment in later sections.

A biquadratic extension $K/\Q$ is determined by its three quadratic subfields, which are themselves determined by three distinct squarefree integers
$
d_1,d_2,d_3\ne 1
$.
Three such quadratic extensions
$
\Q( \sqrt{d_i})
$
come from a biquadratic extension precisely when
\[
d_3\;=\;  d_1d_2 \,/ \gcd(d_1,d_2)^2.
\]
Therefore we put
\[
   \mathcal{S}
   \;=\;
   \{  (d_1,d_2,d_3):
         \text{$d_i \ne 1$ are distinct square-free integers with
           $
           d_3 = d_1 d_2 \, / \gcd(d_1,d_2)^2
           $}
   \},
\]
and we call the elements of
$\mathcal{S}$
\emph{admissible triples}.
The field discriminant
$
\Delta
$
of
$
\Q( \sqrt{d_1}, \sqrt{d_2}, \sqrt{d_3} )
$
is equal to the product of the discriminants of its three quadratic
subfields. Explicitly, by \cite[Exercise~2.42(f)]{marcus} we have
\begin{equation}
\Delta\;=\;
\begin{cases}
  d_1 d_2 d_3
  &
  \text{if every  $d_i\equiv \mymod{1}{4}$;}
  \\
  16 d_1 d_2 d_3
  &
  \text{if one $d_i$ is $\mymod{1}{4}$ and the other two are both 
    $
    \mymod{\text{$2$}}{4}
    $;}
  \\
  16 d_1 d_2 d_3
  &
  \text{if one $d_i$ is $\mymod{1}{4}$ and the other two are both 
    $
    \mymod{\text{$3$}}{4}
    $;}
  \\
  64 d_1 d_2 d_3
  &
  \text{if one $d_i$ is $\mymod{3}{4}$ and the other two are $\mymod{2}{4}$.}
\end{cases}
       \label{disc}
\end{equation}
In particular,
$
\Delta
$
is always positive.  The four cases above cover all possible congruence modulo
$4$
conditions on the $d_i$. For
$
1\le h\le 4
$,
we say that a biquadratic field corresponding to an admissible triple
$
(d_1, d_2, d_3)
$
is of ``type $h$'' if the integers $d_i$ satisfy the congruences in the
$h$-th case in \eqref{disc}, and we define
\[
\mathcal{B}_h(X)\;:=\;\{\text{biquadratic fields of discriminant $\le X$ and of type $h$}\}.
\]
Next we put
\begin{align*}
\mathcal{S}_1\;&:=\;\{(d_1,d_2,d_3)\in \mathcal{S}:d_1\equiv d_2\equiv d_3\equiv\mymod{1}{4}\}, \\
\mathcal{S}_2\;&:=\;\{(d_1,d_2,d_3)\in \mathcal{S}:d_1\equiv d_2\equiv\mymod{2}{4}, d_3\equiv\mymod{1}{4}\}, \\
\mathcal{S}_3\;&:=\;\{(d_1,d_2,d_3)\in \mathcal{S}:d_1\equiv d_2\equiv\mymod{3}{4}, d_3\equiv\mymod{1}{4}\}, \\
\mathcal{S}_4\;&:=\;\{(d_1,d_2,d_3)\in \mathcal{S}:d_1\equiv d_2\equiv\mymod{2}{4}, d_3\equiv\mymod{3}{4}\},
\end{align*}
and we set
\[
   \text{$\tau_1=1, \quad \tau_2=\tau_3=16$, and $\tau_4=64$.}
\]
With $1\le h \le 4$, define
\[
 \mathcal{S}_h(X)
    \;:=\;
    \{
      (d_1, d_2, d_3) \in \mathcal{S}_h:    \tau_hd_1 d_2 d_3 \le X
    \},
\]
and set
\[
      m_1=6,  \quad   m_2=m_3=m_4=2.
\]
The upshot of the above discussion is that we have defined an $m_h$-to-one
correspondence
%
%
%
%
\begin{align}
  \mathcal{S}_h(X)
  &
    \to
  \mathcal{B}_h(X)
                             \label{sx}
  \\
  (d_1,d_2,d_3)
  &
  \mapsto
  \Q(\sqrt{d_1}, \sqrt{d_2}, \sqrt{d_3} )
                              \nonumber
\end{align}
between sets of admissible triples and biquadratic fields. 

Next we wish to distinguish between the totally real and totally complex biquadratic fields. To each biquadratic field $K=\Q(\sqrt{d_1},\sqrt{d_2},\sqrt{d_3})$, we associate a pair of signs $\sigma=(\eps_1,\eps_2)$, where $\eps_i=\sign(d_i)$ for $i=1,2$. Note that $\sign(d_3)=\eps_1\eps_2$ is determined by the other two. Thus $K$ is totally real if $\sigma=(+1,+1)$, and $K$ is totally complex otherwise. Hence we put
\[
\mathcal{S}_h^\sigma(X)\;:=\;\{(d_1,d_2,d_3)\in \mathcal{S}_h(X):(\eps_1,\eps_2)=\sigma\}.
\]
Finally we set up notation for the extension problems. For $G=Q_8$ or $D_4$,
put
\begin{equation}
\label{char-fn}
\ind_G(d_1,d_2,d_3)\;:=\;
\Bigl\{
  \begin{array}{ll}
    1
    &
    \text{if $\Q(\sqrt{d_1}, \sqrt{d_2}, \sqrt{d_3} )$ admits a $G$-extension,}
    \\
    0
    &
    \text{otherwise}.
  \end{array}
\end{equation}
To count fields that admit specified extensions, we define
\begin{equation}
\label{bhxG}
B_h^\sigma(X;G)
\;:=\;
\frac{1}{m_h}\sum_{(d_1,d_2,d_3)\in \mathcal{S}_h^\sigma(X)}\ind_G(d_1,d_2,d_3)
\end{equation}
for $G=Q_8$ or $D_4$, and then we put
\begin{equation}
\label{sum-over-h}
{B}^\sigma(X;G)
\;:=\;
\sum_{1\le h\le 4}  {B}_h^\sigma(X;G).
\end{equation}

\section{Local conditions for $G$-extensions}
       \label{sec:criterion}

Here is the classical criterion for the 
$Q_8$-extension problem \cite[$\S 6.1$]{generic}.

\begin{thm}[Witt \cite{witt}]
    \label{thm:witt}
Let $k$ be a field of characteristic $\not=2$, and let 
$
K = k(\sqrt{a}, \sqrt{b})
$
be a biquadratic extension of $k$ with
$
a, b\in k
$.
Then $K$ is contained in a $Q_8$-extension of $k$ if and only if the quadratic
forms
$
aX^2 + bY^2 + abZ^2
$
and
$
U^2 + V^2 + W^2
$
are equivalent over $k$.

\end{thm}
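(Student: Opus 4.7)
The plan is to recast the extension problem as an embedding problem in Galois cohomology, compute the resulting obstruction class in the $2$-torsion of the Brauer group, and recognise it as the invariant distinguishing the two quadratic forms in the statement. Write $G_k=\gal(k^{\mathrm{sep}}/k)$, and let $\phi=(\chi_a,\chi_b)\colon G_k\twoheadrightarrow \gal(K/k)\cong C_2\times C_2$ be the surjection cut out by $K=k(\sqrt{a},\sqrt{b})$. The quaternion group sits in the non-split central extension
\[
  1\longrightarrow C_2\longrightarrow Q_8\longrightarrow C_2\times C_2\longrightarrow 1,
\]
classified by a class $[\omega]\in H^2(C_2\times C_2,C_2)$. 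The standard theory of embedding problems then says that $K$ admits a $Q_8$-extension of $k$ if and only if the pullback $\phi^*[\omega]\in H^2(G_k,C_2)\cong \mathrm{Br}(k)[2]$ is trivial.

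To compute $\phi^*[\omega]$, I would realise $Q_8=\{\pm 1,\pm i,\pm j,\pm k\}$, take the set-theoretic section $s\colon C_2\times C_2\to Q_8$ with $s(\bar\imath)=i$, $s(\bar\jmath)=j$, $s(\bar\imath\,\bar\jmath)=k$, and expand the resulting $2$-cocycle $\omega(g,h)=s(g)s(h)s(gh)^{-1}$. Pulling back via $\phi$ and invoking the Kummer identification $H^1(G_k,\mu_2)=k^\times/(k^\times)^2$ together with the classical identification $\chi_a\cup\chi_b\leftrightarrow(a,b)_k$ of cup products with quaternion algebra classes in $\mathrm{Br}(k)[2]$, the calculation should yield
\[
  \phi^*[\omega]\;=\;(a,a)_k\,+\,(a,b)_k\,+\,(b,b)_k\;\in\;\mathrm{Br}(k)[2].
\]

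To close the argument, I would match this obstruction with the invariant distinguishing $q_1=\langle a,b,ab\rangle$ from $q_2=\langle 1,1,1\rangle$. Both forms are nondegenerate ternary forms of trivial discriminant in $k^\times/(k^\times)^2$, so by the classical classification of such forms in terms of dimension, discriminant, and Hasse--Witt invariant, it suffices to compare their Hasse--Witt invariants. Bilinearity of the Hilbert symbol gives
\[
  w(q_1)\,w(q_2)^{-1}\;=\;(a,b)_k\,(a,ab)_k\,(b,ab)_k\;=\;(a,a)_k\,(a,b)_k\,(b,b)_k,
\]
which is precisely $\phi^*[\omega]$; hence $q_1\sim q_2$ over $k$ if and only if the obstruction vanishes.

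The main obstacle is the cocycle computation producing $\phi^*[\omega]$: one has to pick a workable representative for the $Q_8$ central extension and track signs carefully through the cup-product pairing to land on exactly the displayed Brauer class (equivalently, to identify $[\omega]$ with $\alpha^2+\alpha\beta+\beta^2$ in the polynomial-ring description of $H^\ast(C_2\times C_2,\F_2)$, where $\alpha,\beta$ are the dual generators of $H^1$). Everything else in the argument is either standard embedding-problem machinery or routine manipulation of Hilbert symbols.
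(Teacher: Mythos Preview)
The paper does not prove this theorem; it is quoted as a classical result of Witt (with a pointer to \cite[\S6.1]{generic}) and then used as a black box in the proof of Lemma~\ref{lem:q8}. So there is no in-paper argument to compare your proposal against.

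Your cohomological route is correct and is essentially the standard modern proof of Witt's criterion. Two points deserve tightening. First, after the obstruction $\phi^*[\omega]$ vanishes you obtain some lift $\tilde\phi\colon G_k\to Q_8$ of $\phi$, but the embedding problem asks for a \emph{surjective} lift; you should note that surjectivity is automatic here because $Q_8$ has a unique involution and hence contains no subgroup isomorphic to $C_2\times C_2$, so the image of any lift of the surjection $\phi$ cannot be a complement of the centre. Second, your appeal to ``the classical classification of such forms in terms of dimension, discriminant, and Hasse--Witt invariant'' is valid for \emph{ternary} forms over any field of characteristic $\neq 2$, but it is worth saying so explicitly, since the analogous statement fails in higher dimensions over general fields. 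A clean way to phrase this step without invoking that classification: by Witt cancellation, $\langle a,b,ab\rangle\cong\langle 1,1,1\rangle$ if and only if the $2$-fold Pfister forms $\langle 1,a,b,ab\rangle=\langle\!\langle -a,-b\rangle\!\rangle$ and $\langle 1,1,1,1\rangle=\langle\!\langle -1,-1\rangle\!\rangle$ are isometric, which over any field is equivalent to $(-a,-b)_k=(-1,-1)_k$ in $\mathrm{Br}(k)[2]$; a short bilinearity computation shows this equals your obstruction $(a,a)_k+(a,b)_k+(b,b)_k$.
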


For the $D_4$-extension problem we have the following
folklore
result \cite[p.~35]{generic}.

\begin{thm}
  \label{thm:dihedral}
Let
$
K=k(\sqrt{a}, \sqrt{b})
$
be a biquadratic extension of $k$.  Then $K/k$ is contained in a $D_4$-extension
$
F/k
$
such that
$
F/k(\sqrt{b})
$
is cyclic if and only if $ab$ is a norm in
$
k(\sqrt{a})/k
$.

\end{thm}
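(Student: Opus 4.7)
The plan is to realize a candidate $F$ as a degree-$2$ Kummer extension of $K := k(\sqrt{a},\sqrt{b})$, say $F = K(\sqrt{\alpha})$ with $\alpha\in K^\times$, and translate the three conditions on $F$ (Galois over $k$, Galois group $D_4$, cyclic over $k(\sqrt{b})$) into explicit norm conditions on $\alpha$. Let $\sigma,\tau \in \gal(K/k)$ be the involutions fixing $k(\sqrt{b})$ and $k(\sqrt{a})$ respectively. Galoisness of $F/k$ amounts to $[\alpha] \in K^\times/K^{\times 2}$ being $\gal(K/k)$-fixed, and the standard Kummer criterion for a cyclic $C_4$-extension above its unique intermediate quadratic subfield says that $F/k(\sqrt{b})$ is cyclic of degree $4$ precisely when $N_{K/k(\sqrt{b})}(\alpha) \in a \cdot k(\sqrt{b})^{\times 2}$, since $a$ is the Kummer element for $K/k(\sqrt{b})$. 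Analogous criteria apply with $k(\sqrt{a})$ or $k(\sqrt{ab})$ in place of $k(\sqrt{b})$.

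For the reverse direction, assume $ab = N_{k(\sqrt{a})/k}(\gamma)$ for some $\gamma \in k(\sqrt{a})^\times$, set $\alpha := \gamma$ and $F := K(\sqrt{\gamma})$, and check in sequence: (i) $\gamma \notin K^{\times 2}$, since otherwise $\gamma$ or $\gamma/b$ would lie in $k(\sqrt{a})^{\times 2}$, forcing $ab$ or $a/b$ to be a square in $k$ and contradicting biquadraticity; (ii) $[\gamma]$ is $\gal(K/k)$-invariant, using $\tau(\gamma)=\gamma$ and the identity $\sigma(\gamma)\,\gamma = N_{k(\sqrt{a})/k}(\gamma) = ab = (\sqrt{a}\sqrt{b})^2$ in $K$ to see $\sigma(\gamma)/\gamma$ is a square; (iii) $F/k(\sqrt{b})$ is cyclic by the criterion above; (iv) $F/k(\sqrt{a})$ is biquadratic since $\gamma$ already lies in $k(\sqrt{a})$, so $F = k(\sqrt{a})(\sqrt{b},\sqrt{\gamma})$; and (v) $F/k(\sqrt{ab})$ is biquadratic since $N_{K/k(\sqrt{ab})}(\gamma) = ab = (\sqrt{ab})^2$ is a square in $k(\sqrt{ab})$. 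The resulting signature of index-$2$ subgroups---one cyclic and two Klein---uniquely identifies $\gal(F/k)$ as $D_4$.

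For the forward direction, assume $F/k$ is a $D_4$-extension with $F/k(\sqrt{b})$ cyclic. Because $F/k(\sqrt{a})$ corresponds to a Klein four-subgroup of $D_4$, it is biquadratic, so $F = k(\sqrt{a})(\sqrt{b},\sqrt{e})$ for some $e\in k(\sqrt{a})^\times$, and we may take $\alpha = e \in k(\sqrt{a})^\times$. Then $N_{K/k(\sqrt{b})}(\alpha) = N_{k(\sqrt{a})/k}(\alpha)$ lies in $k^\times \cap a\cdot k(\sqrt{b})^{\times 2} = a\cdot k^{\times 2} \;\cup\; ab\cdot k^{\times 2}$. The second coset yields $ab$ as a norm immediately. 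The main obstacle is excluding the first coset, and the plan is to show that if $N_{k(\sqrt{a})/k}(\alpha)\in a\cdot k^{\times 2}$, then $N_{K/k(\sqrt{ab})}(\alpha) \in a\cdot k(\sqrt{ab})^{\times 2}$ as well, so the same criterion forces $F/k(\sqrt{ab})$ to be cyclic too. But then $\gal(F/k)$ has two cyclic index-$2$ subgroups (together with a biquadratic one coming from $k(\sqrt{a})$), forcing $\gal(F/k) \cong C_4 \times C_2$ rather than $D_4$, a contradiction. This group-theoretic distinction between $D_4$ and $C_4 \times C_2$ via their index-$2$ subgroup lattices is the crux of the argument and explains why the hypothesis $\gal(F/k) = D_4$ is essential: having a cyclic $C_4$-tower over $k(\sqrt{b})$ alone does not pin down the correct coset.
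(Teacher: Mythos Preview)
The paper does not give its own proof of this theorem: it is stated as a folklore result with a citation to \cite[p.~35]{generic} and then used as a black box in the proof of Lemma~\ref{lem:d4}. So there is no in-paper argument to compare your proof against.

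Your proof is correct. The Kummer-theoretic approach---writing $F = K(\sqrt{\alpha})$ and translating the Galois and cyclicity conditions into norm conditions on $\alpha$---is the standard route to this result, and your execution is clean. All the key ingredients are present: the cyclic-$C_4$ criterion $N_{K/k(\sqrt{b})}(\alpha) \in a \cdot k(\sqrt{b})^{\times 2}$; the observation that one may take $\alpha \in k(\sqrt{a})^\times$ since $F/k(\sqrt{a})$ is biquadratic (this being the other type of index-$2$ subgroup of $D_4$); the identification $N_{K/k(\sqrt{b})}(\alpha) = N_{K/k(\sqrt{ab})}(\alpha) = N_{k(\sqrt{a})/k}(\alpha)$ coming from $\tau(\alpha)=\alpha$; and the elimination of the spurious coset $a \cdot k^{\times 2}$ by showing it would force $F/k(\sqrt{ab})$ to be cyclic as well. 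One minor stylistic point: in that last step you phrase the contradiction as ``forcing $\gal(F/k) \cong C_4 \times C_2$,'' but it is cleaner simply to note that $D_4$ has a \emph{unique} cyclic subgroup of index $2$, so exhibiting a second one is already the contradiction you want.
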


We now specialize to the case
$
k = \Q
$  
and express these criteria in terms of quadratic symbols. First we set some notation. 
For any prime $p$ and any non-zero integer $E$, define
\[
E_p\;:=\;m/p^\alpha\qquad\text{where }p^\alpha\;\|\;E,\; \text{with } \alpha\ge0.
\]
Thus $E_p=E$ if $p\nmid E$, and if $E$ is squarefree and
$
p\mid E
$,
then
$
E_p=E/p
$.
If a variable has a subscript, say $E=d_i$, we will write $E_p=d_{i,p}$.
In the lemma below and for the
rest of the paper, we write $(a/b)$ for the usual Kronecker symbol. We recall for future reference that for odd integers $n$, we have
\[
  \Big(\frac{2}{n}\Big)\;=\;\Big(\frac{2}{-n}\Big)\;=\;(-1)^{(n^2-1)/8}.
\] Lastly, we put
\begin{equation}
  \eta(a,b)
  \;:=\;
  (-1)^{\frac{a-1}{2}\frac{b-1}{2}}\;=\;\Big(\frac{-1}{a}\Big)^{\frac{b-1}{2}}.
\end{equation}
Note that if $a,b$ are odd, then $\eta(a,b+2)=(-1/a)\eta(a,b)$, and $\eta(a,a)=(-1/a)$. If $a,b$ are odd and positive, then by quadratic reciprocity we have
\[
\Big(\frac{a}{b}\Big)\;=\;\eta(a,b)\Big(\frac{b}{a}\Big).
\]
Denote by
$
\PP
$
the set of ordered pairs
$
(d_1, d_2)
$
of distinct, square-free integers $\not=1$.

\begin{lem}
  \label{lem:q8}
Let
$  
(d_1, d_2)\in\PP
$.
Then
        $
        \biquad
        $
is contained in a $Q_8$-extension if and only if $\sign(d_1,d_2)=(+,+)$ and
the following conditions hold for every prime number $p$:
  \[
    \begin{array}{r|rrrrrrrrrrrrrrrrrrrr}
      &
      p>2
      &&
      p=2
\\    \hline
      p\nmid d_1, p\nmid d_2
      &
      \text{no condition}
      &&
      \displaystyle
      \eta(d_1,d_2)\Big(\frac{-1}{d_1d_2}\Big)=1
\\
      p\,|\, d_1, p \nmid d_2
      &
      \displaystyle
      \Bigl( \frac{-d_2}{p} \Bigr) = 1  \rule{0pt}{17pt}
      &&
      \displaystyle
      \eta(d_{1,2},d_2)
      \Bigl( \frac{-1}{d_{1,2}} \Bigr)
      \Bigl( \frac{-2}{d_2} \Bigr)
      = 1      
\\
      p\nmid d_1, p\,|\,d_2
      &
      \displaystyle
      \Bigl( \frac{-d_1}{p} \Bigr) = 1 \rule{0pt}{19pt}
      &&
      \displaystyle
      \eta(d_1,d_{2,2})
      \Bigl( \frac{-2}{d_1} \Bigr)
      \Bigl( \frac{-1}{d_{2, 2}} \Bigr)
      = 1      
\\
      p\,|\,d_1, p\,|\,d_2
      &
      \displaystyle
      \Bigl( \frac{-d_{1,p} d_{2,p}}{p} \Bigr) = 1    \rule{0pt}{19pt}
      \rule{0pt}{15pt}
      &&
      \displaystyle
      \eta(d_{1,2},d_{2,2})
      \Bigl( \frac{-2}{d_{1, 2} d_{2,2}} \Bigr)
      = 1      
    \end{array}
  \]
\end{lem}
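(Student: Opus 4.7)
The plan is to combine Witt's criterion (Theorem~\ref{thm:witt}) with the Hasse--Minkowski local--global principle for rational quadratic forms. By Witt, $\Q(\sqrt{d_1},\sqrt{d_2})$ admits a $Q_8$-extension over $\Q$ if and only if the ternary forms $q_1 := d_1 X^2 + d_2 Y^2 + d_1 d_2 Z^2$ and $q_2 := U^2+V^2+W^2$ are $\Q$-equivalent, and by Hasse--Minkowski this is equivalent to their being equivalent over every completion of $\Q$. So the strategy is to translate each local equivalence into the Kronecker-symbol form tabulated in the lemma.

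At the archimedean place, $q_2$ is positive definite, so $q_1$ must be too; this forces $d_1, d_2 > 0$, giving the sign condition $\sigma = (+,+)$. At a finite prime $p$, two non-degenerate ternary forms over $\Q_p$ are equivalent iff their discriminants agree modulo squares and their Hasse--Witt invariants coincide. The discriminant of $q_1$ is $(d_1 d_2)^2$, a square, and therefore matches that of $q_2$. The Hasse--Witt invariant of $q_2$ is trivial, while that of $q_1$, using $(a,a)_p = (a,-1)_p$ to simplify the product $(d_1,d_2)_p(d_1,d_1 d_2)_p(d_2,d_1 d_2)_p$, collapses to
\[
  \mathcal{H}_p \;:=\; (d_1, d_2)_p\,(d_1, -1)_p\,(d_2, -1)_p,
\]
where $(\cdot,\cdot)_p$ denotes the Hilbert symbol. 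So the remaining task is to convert the condition $\mathcal{H}_p = 1$ into the explicit form stated in the lemma, one row per case depending on which of $d_1, d_2$ is divisible by $p$.

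This last step is a routine but careful case analysis. For $p > 2$, applying the standard formula $(p^\alpha u, p^\beta v)_p = (-1)^{\alpha\beta(p-1)/2}(u/p)^\beta(v/p)^\alpha$ for $p$-adic units $u, v$ produces, in the four rows in order, the ``no condition'' case (all three Hilbert symbols trivial) and the three Kronecker-symbol conditions $(-d_2/p) = 1$, $(-d_1/p) = 1$, and $(-d_{1,p} d_{2,p}/p) = 1$. For $p = 2$ one instead uses
\[
  (2^\alpha u, 2^\beta v)_2 \;=\; (-1)^{\epsilon(u)\epsilon(v) + \alpha\omega(v) + \beta\omega(u)},
\]
with $\epsilon(u) = (u-1)/2 \bmod 2$ and $\omega(u) = (u^2-1)/8 \bmod 2$, together with the identifications $(-1)^{\epsilon(u)} = (-1/u)$ and $(-1)^{\omega(u)} = (2/u)$, to rewrite $\mathcal{H}_2$ in the $\eta$-and-Kronecker-symbol form displayed in the right column.

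The main obstacle is the $p=2$ bookkeeping: tracking the $\omega$-contributions, combining them with the various $(-1/\,\cdot\,)$ terms, and absorbing every stray $(2/\,\cdot\,)$ factor into the compact $(-2/\,\cdot\,)$ symbols appearing in the lemma's table. Once that calculation is done, the two asymmetric rows follow from each other by symmetry in $d_1 \leftrightarrow d_2$, and the stated criterion is established.
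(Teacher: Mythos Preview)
Your proposal is correct and follows essentially the same route as the paper: Witt's criterion plus Hasse--Minkowski reduces the problem to the vanishing of a product of local Hilbert symbols, which the paper writes as $(d_1,d_2)_v(d_1,d_1)_v(d_2,d_2)_v$ and you write equivalently as $(d_1,d_2)_p(d_1,-1)_p(d_2,-1)_p$ via $(a,a)_p=(a,-1)_p$. The paper then carries out exactly the case analysis you outline, tabulating the three Hilbert symbols for $p>2$ and $p=2$ using the same explicit formulas and multiplying them together to obtain the stated conditions.
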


\begin{proof}
Both
  $
  d_i\not=0
  $,
so the two quadratic forms in Theorem \ref{thm:witt} have the same
rank as well as the same discriminant modulo
$
(\Q^\times)^2
$.
It then follows from \cite[Theorem 4.9]{serre:course} that these two forms are
$\Q$-equivalent if and only if they are equivalent at every completion of
$\Q$.
The latter is true, by \cite[Theorem 4.7]{serre:course}, if and only if the
following product of Hilbert symbols
  \begin{align*}
    (d_1, d_2)_v (d_1, d_1 d_2)_v (d_2, d_1 d_2)_v
  \end{align*}
is equal to
$1$
for every place $v$ of $\Q$.
Hilbert symbols are symmetric, multiplicatively bilinear, and take values in
$
\pm 1
$,
so this product simplifies to
  \begin{align}
    (d_1, d_2)_v (d_1, d_1)_v (d_2, d_2)_v.
\label{ab}    
  \end{align}
Since
$
(u, v)_\infty = -1
$
if and only if both $u, v$ are negative, the product (\ref{ab}) is equal to $1$
at
$
v=\infty
$
if and only if both
$
d_1, d_2
$
are positive.

Turning to the finite primes, let us first recall how to relate Hilbert
symbols to quadratic characters.
Let
$
a, b
$
be non-zero integers.  Given a prime number $p$, write
$
a = p^{\alpha_p} A_p, b = p^{\beta_p} B_p
$
with
$
p\nmid A_p B_p
$.
By \cite[Theorem 3.1]{serre:course}, the Hilbert symbol
$
(a, b)_p
$
is equal to
\begin{equation}
\renewcommand{\arraystretch}{2}
    (a, b)_p
    =
    \left\{
      \begin{array}{lllll}
\displaystyle
        \Bigl(
          \frac{-1}{p}
        \Bigr)^{\alpha_p {\beta_p}}
        \Bigl(
          \frac{A_p}{p}
        \Bigr)^{\beta_p}
        \Bigl(
          \frac{B_p}{p}
        \Bigr)^{\alpha_p}
    &&
        \text{if $p>2$;}
    \\
\displaystyle      
        \eta(A_2,B_2)
        \Bigl( \frac{2}{A_2} \Bigr)^{\beta_2}
        \Bigl( \frac{2}{B_2} \Bigr)^{\alpha_2}
    &&
        \text{if $p=2$.}
  \end{array}
  \right.
  \renewcommand{\arraystretch}{1}
\label{localsymbol}
\end{equation}

For an odd prime $p$, we can now readily compute the following Hilbert symbols,
from which the $p>2$ case of  Lemma \ref{lem:q8} follows:
\[
  \begin{array}{r|ccc}
    &
    (d_1, d_2)_p  &  (d_1, d_1)_p   &  (d_2, d_2)_p
    \\    \hline
    p\nmid d_1 d_2
    &
    1 & 1 & 1
       \rule{0pt}{12pt}    
    \\
    p \, | \, d_1, p \nmid  d_2
    &
    \displaystyle
    \Big( \frac{d_2}{p} \Big)
    &
    \displaystyle
    \Big( \frac{-1}{p} \Big)
    &
    1
       \rule{0pt}{12pt}    
    \\
    p \nmid  d_1, p \, | \, d_2
    &
    \displaystyle
    \Big( \frac{d_1}{p} \Big)
    &
    1
    &
    \displaystyle
    \Big( \frac{-1}{p} \Big)
       \rule{0pt}{12pt}    
    \\
    p \, | \, d_1, p \, | \,  d_2
    &
    \displaystyle
    \Big( \frac{-1}{p} \Big)
    \Big( \frac{d_{1, p}}{p} \Big)
    \Big( \frac{d_{2, p}}{p} \Big)
    &
    \displaystyle
    \Big( \frac{-1}{p} \Big)
    &
    \displaystyle
    \Big( \frac{-1}{p} \Big)
       \rule{0pt}{12pt}    
  \end{array}
\]
Finally,  we take $p=2$.  Again we apply (\ref{localsymbol}) to compute the
the following Hilbert symbols:
\[
  \begin{array}{r|ccc}
    &
    (d_1, d_2)_2  &  (d_1, d_1)_2   &  (d_2, d_2)_2
    \\    \hline
    2\nmid d_1 d_2
    &
    \eta(d_1,d_2)
    &
    \displaystyle
    \Big(\frac{-1}{d_1}\Big)
    &
    \displaystyle
    \Big(\frac{-1}{d_2}\Big)
       \rule{0pt}{15pt}    
    \\
    2 \, | \, d_1, 2 \nmid  d_2
    &
    \displaystyle
    \eta(d_{1,2},d_2)
    \Big(\frac{2}{d_2}\Big)
    &
    \displaystyle
    \Big(\frac{-1}{d_{1,2}}\Big)
    &
    \displaystyle
    \Big(\frac{-1}{d_2}\Big)
       \rule{0pt}{12pt}    
    \\
    2 \nmid  d_1, 2 \, | \, d_2
    &
    \displaystyle
    \eta(d_1,d_{2,2})
    \Big(\frac{2}{d_1}\Big)
    &
    \displaystyle
    \Big(\frac{-1}{d_1}\Big)
    &
    \displaystyle
    \Big(\frac{-1}{d_{2,2}}\Big)
       \rule{0pt}{12pt}    
    \\
    2 \, | \, d_1, 2 \, | \,  d_2
    &
    \displaystyle
    \eta(d_{1,2},d_{2,2})
    \Big(\frac{2}{d_{1,2}d_{2,2}}\Big)
    &
    \displaystyle
    \Big(\frac{-1}{d_{1,2}}\Big)
    &
    \displaystyle
    \Big(\frac{-1}{d_{2,2}}\Big)
       \rule{0pt}{12pt}    
  \end{array}
\]
Multiply these symbols together and the $p=2$ case of Lemma \ref{lem:q8}
follows.
\end{proof}

\begin{cor}
  \label{cor:b3q8}
Biquadratic fields in the set $\mathcal{B}_3(X)$ (see Section \ref{sec:parametrize}) do
not admit a
$Q_8$-extension.
\end{cor}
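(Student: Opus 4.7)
The plan is to apply Lemma \ref{lem:q8} to the ordered pair $(d_1,d_2)$ coming from any admissible triple $(d_1,d_2,d_3)\in\mathcal{S}_3$, and to show that the $p=2$ Hilbert-symbol condition alone already rules out a $Q_8$-extension. Since every biquadratic field in $\mathcal{B}_3(X)$ arises from such a triple, the corollary will follow.

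First I would dispose of the sign condition: if either $d_1$ or $d_2$ is negative, then Lemma \ref{lem:q8} fails at the archimedean place and we are done. So assume $d_1, d_2 > 0$ with $d_1 \equiv d_2 \equiv 3 \pmod 4$. In particular both are odd, placing us in the top-right cell of the table, which requires
\[
  \eta(d_1, d_2)\Bigl(\frac{-1}{d_1 d_2}\Bigr) \;=\; 1.
\]
Since $(d_i - 1)/2$ is odd for $i=1,2$, one has $\eta(d_1,d_2) = (-1)^{\text{odd}\cdot\text{odd}} = -1$. Since $d_1 d_2 \equiv 9 \equiv 1 \pmod 4$ and $d_1 d_2 > 0$, one has $\Bigl(\frac{-1}{d_1 d_2}\Bigr) = 1$. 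Hence the product equals $-1\ne 1$, so the $p=2$ condition is violated and no $Q_8$-extension exists.

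I do not foresee any real obstacle; the argument is a direct evaluation at $p=2$ and is insensitive to whether $d_1$ and $d_2$ share odd prime factors. The only point worth a sanity check is that the criterion of Lemma \ref{lem:q8} is a property of the biquadratic field (independent of which generating pair of square-free integers is used); this is automatic because the underlying Witt criterion in Theorem \ref{thm:witt} is phrased intrinsically in terms of $K/\Q$.
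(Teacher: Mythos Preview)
Your proof is correct and follows essentially the same approach as the paper: both arguments show that for $(d_1,d_2)$ with $d_1\equiv d_2\equiv 3\pmod 4$ the product $\eta(d_1,d_2)\bigl(\tfrac{-1}{d_1d_2}\bigr)$ equals $-1$, so the $p=2$ condition in Lemma~\ref{lem:q8} fails. You are slightly more explicit than the paper in first disposing of the case where one of the $d_i$ is negative via the archimedean condition, but this is a cosmetic difference only.
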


\begin{proof}
Given a pair
$
(d_1, d_2)\in\mathcal P
$,
to say that the biquadratic field
$
\Q(\sqrt{d_1}, \sqrt{d_2})
$
belongs to the set $B_3(X)$ is to say that 
$
d_1, d_2
$
are odd, square-free integers $\not=1$  such that
$
d_1 \equiv d_2 \equiv\mymod{3}{4}
$.
Then
$
\eta(d_1, d_2) (-1/d_1d_2) = -1
$,
and so   
$
\Q(\sqrt{d_1}, \sqrt{d_2})
$
fails the $2$-adic condition in Lemma \ref{lem:q8} for admitting a $Q_8$-extension.
\end{proof}

We now turn to the $D_4$-extension problem. Since $D_4$ has a unique $C_4$-subgroup, if $F/k$ is a $D_4$-extension then there is a unique intermediate quadratic subfield $K \subset M \subset F$ such that $F/M$ is cyclic. For $F/\Q$ with $\biquad\subset F$, there are three possibilities for the quadratic field $M/\Q$, which gives us the three cases in the following lemma.

\begin{lem}
  \label{lem:d4}
Let
$
(d_1, d_2)\in\PP
$.
Then
$
\biquad
$
is contained in a $D_4$-extension $F/\Q$ if and only if all conditions in any
one of the following three cases hold:

{\rm(i)}
$F/\Q$ is cyclic over $\Q(\sqrt{d_1})$ if $\sign(d_1,d_2)\ne(+1,-1)$ and the following conditions hold for every prime number $p$:
 \[
    \begin{array}{r|rrrrrrrrrrrrrrrrrrrr}
      &
      p>2
      &&
      p=2
\\    \hline
      p\nmid d_1, p\nmid d_2
      &
      \text{no condition}
      &&
      \displaystyle
      \eta(d_1+2,d_2) = 1
\\
      p\,|\, d_1, p \nmid d_2
      &
      \displaystyle
      \Bigl( \frac{d_2}{p} \Bigr) = 1  \rule{0pt}{17pt}
      &&
      \displaystyle
      \eta(d_{1,2}+2,d_2)
      \Bigl( \frac{2}{d_2} \Bigr)
      = 1      
\\
      p\nmid d_1, p\,|\,d_2
      &
      \displaystyle
      \Bigl( \frac{-d_1}{p} \Bigr) = 1 \rule{0pt}{19pt}
      &&
      \displaystyle
      \eta(d_1+2,d_{2,2})
      \Bigl( \frac{2}{d_1} \Bigr)
       = 1   
\\
      p\,|\,d_1, p\,|\,d_2
      &
      \displaystyle
      \Bigl( \frac{d_{1,p} d_{2,p}}{p} \Bigr) = 1    \rule{0pt}{19pt}
      \rule{0pt}{15pt}
      &&
      \displaystyle
      \eta(d_{1,2}+2,d_{2,2})
      \Bigl( \frac{2}{d_{1,2} d_{2,2} } \Bigr)
       = 1     
    \end{array}
  \]

{\rm(ii)}
  $F/\Q$ is cyclic over $\Q(\sqrt{d_2})$ if $\sign(d_1,d_2)\ne(-1,+1)$ and the
  following conditions hold for every prime number $p$:
 \[
    \begin{array}{r|rrrrrrrrrrrrrrrrrrrr}
      &
      p>2
      &&
      p=2
\\    \hline
      p\nmid d_1, p\nmid d_2
      &
      \text{no condition}
      &&
      \displaystyle
      \eta(d_1,d_2+2) = 1
\\
      p\,|\, d_1, p \nmid d_2
      &
      \displaystyle
      \Bigl( \frac{-d_2}{p} \Bigr) = 1  \rule{0pt}{17pt}
      &&
      \displaystyle
      \eta(d_{1,2},d_2+2)
      \Bigl( \frac{2}{d_2} \Bigr)
      = 1      
\\
      p\nmid d_1, p\,|\,d_2
      &
      \displaystyle
      \Bigl( \frac{d_1}{p} \Bigr) = 1 \rule{0pt}{19pt}
      &&
      \displaystyle
      \eta(d_1,d_{2,2}+2)
      \Bigl( \frac{2}{d_1} \Bigr)
       = 1   
\\
      p\,|\,d_1, p\,|\,d_2
      &
      \displaystyle
      \Bigl( \frac{d_{1,p} d_{2,p}}{p} \Bigr) = 1    \rule{0pt}{19pt}
      \rule{0pt}{15pt}
      &&
      \displaystyle
      \eta(d_{1,2},d_{2,2}+2)
      \Bigl( \frac{2}{d_{1,2} d_{2,2} } \Bigr)
       = 1     
    \end{array}
  \]

{\rm(iii)}
  $F/\Q$ is cyclic over $\Q(\sqrt{d_1d_2})$ if $\sign(d_1,d_2)\ne(-1,-1)$ and the
  following conditions hold for every prime number $p$:
 \[
    \begin{array}{r|rrrrrrrrrrrrrrrrrrrr}
      &
      p>2
      &&
      p=2
\\    \hline
      p\nmid d_1, p\nmid d_2
      &
      \text{no condition}
      &&
      \displaystyle
      \eta(d_1,d_2) = 1
\\
      p\,|\, d_1, p \nmid d_2
      &
      \displaystyle
      \Bigl( \frac{d_2}{p} \Bigr) = 1  \rule{0pt}{17pt}
      &&
      \displaystyle
      \eta(d_{1,2},d_2)
      \Bigl( \frac{2}{d_2} \Bigr)
      = 1      
\\
      p\nmid d_1, p\,|\,d_2
      &
      \displaystyle
      \Bigl( \frac{d_1}{p} \Bigr) = 1 \rule{0pt}{19pt}
      &&
      \displaystyle
      \eta(d_1,d_{2,2})
      \Bigl( \frac{2}{d_1} \Bigr)
       = 1   
\\
      p\,|\,d_1, p\,|\,d_2
      &
      \displaystyle
      \Bigl( \frac{-d_{1,p} d_{2,p}}{p} \Bigr) = 1    \rule{0pt}{19pt}
      \rule{0pt}{15pt}
      &&
      \displaystyle
      \eta(d_{1,2},d_{2,2})
      \Bigl( \frac{2}{d_{1,2} d_{2,2} } \Bigr)
       = 1     
    \end{array}
  \]
\end{lem}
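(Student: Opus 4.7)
The plan is to parallel the proof of Lemma \ref{lem:q8}, replacing Witt's criterion with Theorem \ref{thm:dihedral} and reusing the Hilbert symbol tables already established. Since $D_4$ contains a unique cyclic subgroup of order $4$, any $D_4$-extension $F/\Q$ containing $\Q(\sqrt{d_1},\sqrt{d_2})$ is cyclic over exactly one of the three intermediate quadratic subfields $\Q(\sqrt{d_1})$, $\Q(\sqrt{d_2})$, and $\Q(\sqrt{d_1d_2})$; it therefore suffices to treat the three cases separately and take their disjunction.

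First I would apply Theorem \ref{thm:dihedral} to rephrase each case as a global norm condition. For case (i), taking $b=d_1$ and $a=d_2$ shows that $F/\Q(\sqrt{d_1})$ is cyclic if and only if $d_1 d_2$ is a norm from $\Q(\sqrt{d_2})/\Q$. Swapping the roles of $a$ and $b$ handles case (ii), where the condition becomes that $d_1d_2$ is a norm from $\Q(\sqrt{d_1})/\Q$. For case (iii), using $K=\Q(\sqrt{d_1},\sqrt{d_3})$ with $d_3=d_1d_2/\gcd(d_1,d_2)^2$ and the identity $d_1 d_3 \equiv d_2 \pmod{(\Q^\times)^2}$, Theorem \ref{thm:dihedral} reduces the condition to: $d_2$ is a norm from $\Q(\sqrt{d_1})/\Q$. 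By the Hasse norm theorem for cyclic extensions, each global norm condition is equivalent to the vanishing at every place $v$ of $\Q$ of a product of Hilbert symbols, namely
\[
   (d_1,d_2)_v (d_2,d_2)_v = 1,
   \qquad
   (d_1,d_1)_v (d_1,d_2)_v = 1,
   \qquad
   (d_1,d_2)_v = 1
\]
for cases (i), (ii), and (iii) respectively.

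Next I would evaluate each product place by place. At the archimedean place, $(u,v)_\infty=-1$ if and only if $u,v<0$, so a brief case analysis yields the respective sign conditions $\sign(d_1,d_2)\ne(+1,-1)$, $\ne(-1,+1)$, and $\ne(-1,-1)$. For the finite places, the tables for $(d_1,d_2)_p$, $(d_1,d_1)_p$, and $(d_2,d_2)_p$ computed in the proof of Lemma \ref{lem:q8} apply verbatim; multiplying them as prescribed gives the odd-prime conditions directly. For $p=2$, the shift identity $\eta(a+2,b)=\eta(a,b)(-1/b)$ and its analogue $\eta(a,b+2)=(-1/a)\eta(a,b)$ noted before the lemma lets me absorb each factor $(-1/d_{i,2})$ coming from $(d_i,d_i)_2$ into the corresponding argument of $\eta$ via an increment by $2$. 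This produces the $\eta(d_{1,2}+2,\cdot)$ and $\eta(\cdot,d_{2,2}+2)$ expressions in cases (i) and (ii), and leaves $\eta$ unshifted in case (iii), matching the stated conditions.

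The main obstacle is essentially clerical: carefully handling the $p=2$ computation across the four divisibility sub-cases, tracking the correct factors of $d_{i,2}$ versus $d_i$ and the correct shifts by $+2$ in the appropriate argument of $\eta$. No conceptually new input beyond Lemma \ref{lem:q8} is needed; the work amounts to combining Theorem \ref{thm:dihedral}, the Hasse norm theorem, and the Hilbert symbol tables already established.
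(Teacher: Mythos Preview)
Your proposal is correct and follows essentially the same approach as the paper: apply Theorem~\ref{thm:dihedral} to each of the three cyclic-subfield cases, convert the resulting global norm condition into a local one via the Hasse norm theorem, identify the relevant Hilbert symbol product $(d_2,d_1d_2)_v$, $(d_1,d_1d_2)_v$, or $(d_1,d_2)_v$, and then read off the explicit conditions from the tables in the proof of Lemma~\ref{lem:q8} together with the shift identity for $\eta$. The only minor difference is that the paper, rather than directly citing the standard equivalence ``$(a,b)_v=1$ iff $b$ is a local norm from $\Q_v(\sqrt{a})$'', passes through the explicit norm equation $d_1d_2=U^2-d_1W^2$ and verifies by hand that its local solvability is equivalent to the Hilbert symbol condition (including at places where $d_1$ is a local square); your direct appeal to the standard characterization is cleaner and equally valid.
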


\begin{proof}
By the Hasse norm theorem, a rational integer
$
d_1 d_2
$
is a norm in $\Q(\sqrt{d_1})/\Q$ if and only if it is a norm in the extension of
local fields
$
\Q_v(\sqrt{d_1})/\Q_v
$
for every place $v$ of $\Q$.  Recalling Theorem \ref{thm:dihedral}, we see that
$
\Q(\sqrt{d_1}, \sqrt{d_2})
$
is contained in a $D_4$-extension that is cyclic over
$
\Q(\sqrt{d_2})
$
if and only if
\begin{equation}
  \text{$d_1 d_2 = U^2 - d_1 W^2$
    has
    a solution in every completion $\Q_v$ of $\Q$.}         \label{globald4}
\end{equation}
Since
$
d_i\not=0
$,
necessarily $U, W$ cannot be both zero, and so (\ref{globald4}) implies that
\begin{equation}
  \text{$d_1 x^2 + d_1 d_2 y^2 = z^2$  has a  solution in
    $
    ( \Q_v \smallsetminus \{ 0 \} )^3
    $
    for each place $v$ of $\Q$.}          \label{localone}
\end{equation}
By definition of Hilbert symbols, (\ref{localone}) is equivalent to
\begin{equation}
  \text{$(d_1, d_1 d_2)_v = 1$  for all places $v$ of $\Q$.}
        \label{locald4}
\end{equation}
Note that (\ref{localone}), and hence (\ref{locald4}), \textit{is}
equivalent
to (\ref{globald4}):  For each place $v$ of $\Q$, let
$
(a,b,c)\in (\Q_v \smallsetminus \{0\})^3
$
be a solution to 
$
d_1 x^2 + d_1 d_2 y^2 = z^2
$.
If $b\not=0$ then
$
U = c/b, W = a/b
$
is a $\Q_v$-solution to
$
d_1 d_2 = U^2 - d_1 W^2
$.
And if
$
b=0
$
for a given place $v$, then
$
d_1 = (c/a)^2,
$
in which case
\[
   U =  \frac{d_2 + d_1}{2},
   \quad
   W =  \frac{d_2 - d_1}{2c/a}
\]
is a $\Q_v$-solution to (\ref{globald4}).

To recapitulate,
  $
  \Q(\sqrt{d_1}, \sqrt{d_2})
  $
is contained in a $D_4$-extension that is cyclic over
  $
  \Q(\sqrt{d_2})
  $
if and only if (\ref{locald4}) holds.  We then evaluate the Hilbert symbols in
       (\ref{locald4})
using
the two tables in the proof of Lemma
\ref{lem:q8} and we get the
second table in Lemma \ref{lem:d4}, recalling that for odd integers
  $
  a, b
  $,
we have
\[  
   \eta(a, b) \Bigl( \frac{-1}{a} \Bigr) = \eta(a, b+2).
\]
When
$
\Q(\sqrt{d_1}, \sqrt{d_2})
$
is contained in a $D_4$-extension that is cyclic over
$
\Q(\sqrt{d_2})
$
or
$
\Q(\sqrt{d_1 d_2})
$
instead, the corresponding Hilbert symbols conditions are
  \begin{equation*}
    \begin{array}{rrrlllllllllllll}
      \text{cyclic over
        $
        \Q(\sqrt{d_1})
        $:}
      &&
    (d_2, d_1 d_2)_v = 1
    &
    \text{for all places $v$ of $\Q$;}
    \\
      \text{cyclic over
        $
        \Q(\sqrt{d_1 d_2})
        $:}
      &&
    (d_1, d_2)_v = 1
    &
    \text{for all places $v$ of $\Q$.}
  \end{array}
  \end{equation*}
Then these cases follow similarly to the above.
\end{proof}

\begin{remm}
  \label{rem:d4-unique-cyclic}
If $F/\Q$ is a $D_4$-extension containing
$
\Q(\sqrt{d_1}, \sqrt{d_2})
$,
then $F$ is cyclic over exactly one of its quadratic subfields.  However, if
$
\Q(\sqrt{d_1}, \sqrt{d_2})
$
admits a $D_4$-extension, then it is possible that it can be
extended into different $D_4$-extensions
$
F_1, F_2
$
over $\Q$ such that each $F_i$ is cyclic over different quadratic subfields of
$
\Q(\sqrt{d_1}, \sqrt{d_2})
$.
This introduces some complications in the proof of Theorem \ref{thm:density}; see \eqref{d4-to-mi}, for instance.
\end{remm}

\section{Characteristic functions for $G$-extensions}
    \label{sec:char}

Recall our definition of admissible triples $(d_1,d_2,d_3)$ from Section \ref{sec:parametrize}, and recall our definition of
$
{\boldsymbol 1}_{G}(d_1, d_2, d_3),
$
the characteristic function for when the biquadratic field
$
        \Q(\sqrt{d_1}, \sqrt{d_2}, \sqrt{d_3} )
$
admits a $G$-extension (for $G=Q_8$ or $D_4$). In this section we develop explicit formulas for $\ind_G(d_1,d_2,d_3)$ that are divisor sums over quadratic characters.

First we introduce some notation. For
    $
    1\le i\le3$, $1\le j\le2
    $,
the six variables
$
D_{ij}
$
will always denote positive, odd, squarefree integers. In every case below, we
will always have
$
D_i=D_{i1}D_{i2}
$
for each
$
1\le i\le 3
$
(even though the $D_i$ will be defined differently in terms of the $d_i$ in different cases). When referring to these variables together in a tuple, we write
\[
\vec{D}\;:=\;(D_{11},D_{12},D_{21},D_{22},D_{31},D_{32}).
\]
We will use the notation
\begin{equation}
\label{omega-of-d}
\omega(\vec{D})\;:=\;\omega(D_{11}D_{12}D_{21}D_{22}D_{31}D_{32}),
\end{equation}
where $\omega(n)$ denotes the number of distinct prime divisors of $n$, and we define the function
\begin{equation}
\label{g-of-d}
g(\vec{D})\;:=\;\Big(\frac{D_{22}D_{32}}{D_{11}}\Big)\Big(\frac{D_{12}D_{32}}{D_{21}}\Big)\Big(\frac{D_{12}D_{22}}{D_{31}}\Big).
\end{equation}
We will use the notation
$
\sum_{D_i=D_{i1}D_{i2}}
$
to denote a sum running over all positive divisors
$
D_{i1}\mid D_i
$
for each $i=1,2,3$ (again, note that $D_{i2}$ is the complementary divisor, $D_i=D_{i1}D_{i2}$). In every case below, we will write $\eps_i=\sign(d_i)$ for $i=1,2$. For notational convenience, for odd integers $a, b, c$ we define the function
\[
      \Phi(a,b,c)
      \;:=\;
      \eta(a,b+2)
      \eta(b,c+2)
      \eta(c,a+2),
\]
and for any non-zero integer $n$, we define
\[
\psi(n)\;:=\;
\Bigl\{
\begin{array}{llllll}
  1 & \text{if every prime divisor of $n$ is $\equiv\mymod{1}{4}$,}
  \\
  0 & \text{otherwise}.
\end{array}
\]

\subsection{$Q_8$-extension}


\begin{lem}
\label{lem:q8-embedding-function}
Let
$
(d_1,d_2,d_3)
$
be an admissible triple as defined in Section \ref{sec:parametrize}.  If
$
(\eps_1,\eps_2)\ne(1,1)
$,
then
$
\ind_{Q_8}(d_1,d_2,d_3)=0
$.
Otherwise
$
d_1,d_2>0
$,
and we have two cases:
\begin{itemize}
\item
  If
  $
  2\nmid d_1 d_2
  $,
  put $D_3=\gcd(d_1,d_2)>0$,
  $
  D_1 = d_1/D_3
  $,
  and
  $
  D_2 = d_2/D_3
  $.
  In this case,
\[
\ind_{Q_8}(d_1,d_2,d_3)\;=\;\ind_{Q_8}(D_1D_3,D_2D_3,D_1D_2)\;=\sum_{D_i=D_{i1}D_{i2}} F_{\odd}(\vec{D};Q_8)2^{-\omega(\vec{D})}g(\vec{D}),
\]
where we have put
\[
F_{\odd}(\vec{D};Q_8)\;:=\;
\frac{1}
       {2}
  \Big(
      1
      +
      \eta(D_{1}D_{3},D_2D_3)
      \Big(
         \frac{-1}
              {D_1D_2}
      \Big)
  \Big)
       \Phi(D_{11},D_{21},D_{31}).
\]

\item
  If
  $
  2\mid \gcd(d_1,d_2)
  $,
  put
  $
  D_3=\gcd(d_1,d_2)/2>0
  $,
  $
  D_1 = d_1/(2D_3)
  $,
  and
  $
  D_2 = d_2/(2D_3)
  $.
In this case we have
\[
\ind_{Q_8}(d_1,d_2,d_3)\;=\;\ind_{Q_8}(2D_1D_3,2D_2D_3,D_1D_2)
\;=\sum_{D_i=D_{i1}D_{i2}}F_{\even}(\vec{D};Q_8)2^{-\omega(\vec{D})}g(\vec{D}),
\]
where
\[
F_{\even}(\vec{D};Q_8)\;:=\;\frac{1}{2}\Big(1+\eta(D_{1}D_{3},D_2D_3)\Big(\frac{-2}{D_1D_2}\Big)\Big)\Big(\frac{2}{D_{11}D_{21}}\Big)\Phi(D_{11},D_{21},D_{31}).
\]
\end{itemize}
\end{lem}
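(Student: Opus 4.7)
My plan is to directly translate the local criteria of Lemma \ref{lem:q8} into the claimed divisor-sum identity. The sign case is immediate: since Lemma \ref{lem:q8} requires $\sign(d_1,d_2)=(+,+)$, we have $\ind_{Q_8}(d_1,d_2,d_3)=0$ whenever $(\eps_1,\eps_2)\ne(1,1)$. Assume from now on $d_1,d_2>0$ and write $d_1=2^\alpha D_1 D_3$, $d_2=2^\alpha D_2 D_3$, $d_3=D_1 D_2$ with $\alpha=0$ (odd case) or $\alpha=1$ (even case). The key observation is that the odd prime divisors of $d_1d_2$ fall into three disjoint classes according to whether $p \mid D_1$, $p \mid D_2$, or $p \mid D_3$, and these correspond precisely to the three non-trivial rows in the odd-$p$ column of Lemma \ref{lem:q8}. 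Reading off each row, the condition at an odd $p$ is $\chi_p=1$ for an explicit Kronecker symbol: in the odd case, $(-D_2 D_3/p)$, $(-D_1 D_3/p)$, $(-D_1 D_2/p)$ respectively (the $(D_3/p)^2$ from the last row drops out as a square), while in the even case the first two acquire an extra factor $(2/p)$. The $p=2$ condition yields a single global factor $\eta(D_1 D_3, D_2 D_3)(-1/D_1 D_2)$ in the odd case and $\eta(D_1 D_3, D_2 D_3)(-2/D_1 D_2)$ in the even case.

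Next, I would encode each local condition as $\frac{1}{2}(1+\chi_p)$ and take the product over all primes. Expanding $\prod_{p\mid D_i}\frac{1}{2}(1+\chi_p)$ into a sum over divisors $D_{i1}\mid D_i$ with complementary divisor $D_{i2}$ produces the factor $2^{-\omega(\vec{D})}$ together with the triple product of Kronecker symbols
\[
  \Big(\frac{-D_2D_3}{D_{11}}\Big)\Big(\frac{-D_1D_3}{D_{21}}\Big)\Big(\frac{-D_1D_2}{D_{31}}\Big)
\]
in the odd case (times an extra factor $(2/D_{11}D_{21})$ in the even case), with the $p=2$ factor pulled outside the divisor sum.

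The heart of the proof is verifying the algebraic identity
\[
  \Big(\frac{-D_2D_3}{D_{11}}\Big)\Big(\frac{-D_1D_3}{D_{21}}\Big)\Big(\frac{-D_1D_2}{D_{31}}\Big)=\Phi(D_{11},D_{21},D_{31})\,g(\vec{D}).
\]
I would expand each $(D_jD_k/D_{i1})$ multiplicatively using $D_j=D_{j1}D_{j2}$ and sort the resulting factors into three categories: (a) the three $(-1/D_{i1})$ factors arising from the $-1$'s in the numerators; (b) three reciprocal pairs $(D_{j1}/D_{i1})(D_{i1}/D_{j1})$ which, by quadratic reciprocity $(a/b)(b/a)=\eta(a,b)$, collapse into $\eta(D_{11},D_{21})\eta(D_{11},D_{31})\eta(D_{21},D_{31})$; and (c) six ``mixed'' factors of the form $(D_{j2}/D_{i1})$ which reassemble directly into $g(\vec{D})$ as defined in \eqref{g-of-d}. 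Combining (a) and (b) via the identities $\eta(a,b)(-1/a)=\eta(a,b+2)$ and $\eta(a,b)=\eta(b,a)$ then yields precisely $\Phi(D_{11},D_{21},D_{31})$.

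The main obstacle is the bookkeeping in this final rearrangement: keeping straight which of the fifteen factors belong to each of (a)--(c), tracking the signs introduced by reciprocity, and correctly substituting $\eta(a,b)\mapsto\eta(a,b+2)$. Once organized, the formulas for $F_{\odd}$ and $F_{\even}$ drop out from a common calculation, the even case differing only by the extra factor $(2/D_{11}D_{21})$ that is absorbed into the definition of $F_{\even}$.
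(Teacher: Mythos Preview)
Your proposal is correct and follows essentially the same approach as the paper: encode each local condition from Lemma~\ref{lem:q8} as $\tfrac{1}{2}(1+\chi_p)$, expand the product into a divisor sum over the $D_{i1}\mid D_i$, and then use quadratic reciprocity to rewrite the resulting triple Kronecker product as $\Phi(D_{11},D_{21},D_{31})\,g(\vec{D})$. Your (a)--(c) bookkeeping is exactly the rearrangement the paper carries out (somewhat more implicitly), and your handling of the extra $(2/p)$ factors in the even case is correct.
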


\begin{remm}
Note that one can have
$
2\mid d_1
$
and
$
2\nmid d_2
$
(or vice versa) in an admissible triple. However, in our parameterization
  of 
biquadratic fields via the sets
$\mathcal{S}_h$,
we have arranged for $d_1,d_2$ to always have the same parity, so we need not consider these other cases.
\end{remm}

\begin{proof}
Lemma \ref{lem:q8} shows that
\[
\ind_{Q_8}(d_1,d_2,d_3)=0\qquad\text{when }\sign(d_1,d_2)\ne(1,1).
\]
So from now on suppose $d_1,d_2>0$. First, further suppose that
$
2 \nmid d_1 d_2
$.
We take a product over all of the local conditions in the table in Lemma
\ref{lem:q8}, which gives us
\begin{align*}
  \ind_{Q_8}(d_1,d_2,d_3)
  \;=\;
  \frac{1}{2}\Big(1+&\eta(d_1,d_2)\Big(\frac{-1}{d_1d_2}\Big)\Big)
  \\
  &
  \times \prod_{\substack{p\mid d_1 \\ p\nmid d_2}}
  \frac{1}
       {2}
  \Big(
       1 + \Big( \frac{-d_2}{p} \Big)
  \Big)
  \prod_{\substack{p\nmid d_1 \\ p\mid d_2}}
  \frac{1}
       {2}
  \Big(
        1 + \Big( \frac{-d_1}{p} \Big)
  \Big)
  \prod_{\substack{p\mid d_1 \\ p\mid d_2}}
  \frac{1}
       {2}
  \Big(
       1 + \Big( \frac{-d_{1,p}d_{2,p}}{p} \Big)
  \Big).
\nonumber
\end{align*}
Writing
$
D_3=(d_1,d_2)>0
$,
$
D_1 = d_1/D_3,
$
$
D_2 = d_2/D_3
$,
the triple product above is equal to
  \begin{align}
    \frac{1}
         {2^{\omega(D_1)+\omega(D_2)+\omega(D_3)}}
         &
    \underset{{p | D_1}}{\prod}
\Big(1+\Big(\frac{-D_2D_3}{p}\Big)\Big)
  \underset{{p | D_2}}{\prod}
\Big(1+\Big(\frac{-D_1D_3}{p}\Big)\Big) 
  \underset{{p | D_3}}{\prod}   
    \Bigl(
        1 + \Big( \frac{-D_1\frac{D_3}{p} D_2\frac{D_3}{p}}{p} \Big)
    \Bigr)        
\nonumber
\\    
    &=
    \:
    2^{-\omega(D_1 D_2 D_3)}
    \sum_{D_{11} | D_1}
      \Bigl( \frac{-D_2 D_3}{D_{11}} \Bigl)
    \sum_{D_{21} | D_2}
      \Bigl( \frac{-D_1 D_3}{D_{21}} \Bigl)
    \sum_{D_{31} | D_3}
      \Bigl( \frac{-D_1 D_2}{D_{31}} \Bigl).
\label{original}
  \end{align}
Writing $D_i=D_{i1}D_{i2}$ for $i=1,2,3$, the above expression becomes
  \begin{align*}
    &
    2^{-\omega(D_1 D_2 D_3)}    \sum_{D_{i1}\mid D_i}
      \Bigl(
        \frac{-D_{21} D_{22} D_{31} D_{32} }{ D_{11} }
      \Bigl)
      \Bigl(
        \frac{-D_{11} D_{12} D_{31} D_{32} }{ D_{21} }
      \Bigl)
      \Bigl(
        \frac{-D_{11} D_{12} D_{21} D_{22} }{ D_{31} }
      \Bigl),
  \end{align*}
where the sum runs through all ordered triples
$
(D_{11}, D_{21}, D_{31})
$
of positive divisors of
$
(D_1, D_2, D_3)
$.
Since the $D_{ij}$ are odd, we can apply quadratic reciprocity to rewrite the
summand as
  \begin{align*}
      \eta(D_{11},D_{21}+2)
      &\eta(D_{21},D_{31}+2)
      \eta(D_{31},D_{11}+2)
      \Bigl(
        \frac{ D_{22} D_{32} }{ D_{11} }
      \Bigl)
      \Bigl(
        \frac{ D_{12} D_{32} }{ D_{21} }
      \Bigl)
      \Bigl(
        \frac{ D_{12} D_{22} }{ D_{31} }
      \Bigl) \\
      &=\;\Phi(D_{11},D_{21},D_{31})g(\vec{D}).
  \end{align*}
Putting everything together completes the proof in the case
$
2\nmid d_1 d_2
$.

Next, suppose
  $
  2 | \gcd(d_1, d_2)
  $.
  Proceeding as above, we get
\begin{align*}
\ind_{Q_8}(d_1,d_2,d_3)\;=\;\frac{1}{2}\Big(1+&\eta(d_{1,2},d_{2,2})\Bigl( \frac{-2}{d_{1, 2} d_{2,2}} \Bigr)\Big) \\
&\times\prod_{\substack{p\mid d_1 \\ p\nmid d_2 \\ p>2}}\frac{1}{2}\Big(1+\Big(\frac{-d_2}{p}\Big)\Big)\prod_{\substack{p\nmid d_1 \\ p\mid d_2 \\p>2}}\frac{1}{2}\Big(1+\Big(\frac{-d_1}{p}\Big)\Big)\prod_{\substack{p\mid d_1 \\ p\mid d_2 \\ p>2}}\frac{1}{2}\Big(1+\Big(\frac{-d_{1,p}d_{2,p}}{p}\Big)\Big)
\end{align*}
using the results of Lemma \ref{lem:q8}. In this case we define
$
D_1, D_2, D_3
$
differently:  we set
$
D_3 = \gcd(d_1, d_2)/2
$,
$
D_1 = d_1/(2D_3),
$,
and $D_2 = d_2/(2D_3)$.
Then the triple product is equal to
  \begin{align}
         2^{-\omega(D_1D_2D_3)}
         &
    \underset{{p | D_1}}{\prod}
\Big(1+\Big(\frac{-2D_2D_3}{p}\Big)\Big)
  \underset{{p | D_2}}{\prod}
\Big(1+\Big(\frac{-2D_1D_3}{p}\Big)\Big) 
  \underset{{p | D_3}}{\prod}   
    \Bigl(
        1 + \Big( \frac{-D_1\frac{2D_3}{p} D_2\frac{2D_3}{p}}{p} \Big)
    \Bigr)        
\nonumber
\\    
    &=
    \:
    2^{-\omega(D_1 D_2 D_3)}
    \sum_{D_{11} | D_1}
      \Bigl( \frac{-2D_2 D_3}{D_{11}} \Bigl)
    \sum_{D_{21} | D_2}
      \Bigl( \frac{-2D_1 D_3}{D_{21}} \Bigl)
    \sum_{D_{31} | D_3}
      \Bigl( \frac{-D_1 D_2}{D_{31}} \Bigl)
\nonumber
\\
&=\;2^{-\omega(D_1 D_2 D_3)}\sum_{D_{11},D_{21},D_{31}}\Big(\frac{2}{D_{11}D_{21}}\Big)\Phi(D_{11},D_{21},D_{31})\Big(\frac{D_{22}D_{32}}{D_{11}}\Big)\Big(\frac{D_{12}D_{32}}{D_{21}}\Big)\Big(\frac{D_{12}D_{22}}{D_{31}}\Big),
\end{align}
  where again we have put $D_i=D_{i1}D_{i2}$ for each $i$. This completes the
  proof of
  Lemma \ref{lem:q8-embedding-function}.
\end{proof}

\subsection{$D_4$-extension}
In
  this section we prove an analog of Lemma \ref{lem:q8-embedding-function} for
$
D_4
$-extensions.  The basic idea is the same as for the $Q_8$ case, but with an
important complication:  we noted in Remark \ref{rem:d4-unique-cyclic} that
a biquadratic field can be extended to a $D_4$-extension up to three
different ways, but each extendable biquadratic field should be counted only once.  Thus we have an
analytic expression for each of the
$
2^3-1
$
possible extensions, and then we apply an inclusion-exclusion argument to
arrive at the final expression. These considerations are what are responsible for the more complicated expressions in the following lemma.

\begin{lem}
\label{lem:d4-embedding-function}
Let $(d_1,d_2,d_3)$ be an admissible triple; we have two cases:
\begin{itemize}
\item If $2\nmid d_1d_2$, put $D_3=\gcd(d_1,d_2)>0$, $D_1=d_1/\eps_1D_3$, and $D_2=d_2/\eps_2D_3$. Then
\[
\ind_{D_4}(d_1,d_2,d_3)\;=\;\ind_{D_4}(\eps_1D_1D_3,\eps_2D_3D_3,\eps_1\eps_2D_1D_2)\;=\sum_{D_i=D_{i1}D_{i2}} F_{\odd}(\vec{D};D_4)2^{-\omega(\vec{D})}g(\vec{D}),
\]
where we have put
\begin{align*}
F_{\odd}(\vec{D};D_4)\;:&=\;\bigg\{\ind_{(\eps_1,\eps_2)\ne(+1,-1)}\cdot\frac{1}{2}\Big(1+\eta(\eps_1D_1D_3+2,\eps_2D_2D_3)\Big)\Big(\frac{-1}{D_{11}D_{31}}\Big) \\
&\phantom{blahblah}+\ind_{(\eps_1,\eps_2)\ne(-1,+1)}\cdot\frac{1}{2}\Big(1+\eta(\eps_1D_1D_3,\eps_2D_2D_3+2)\Big)\Big(\frac{-1}{D_{21}D_{31}}\Big) \\
&\phantom{blahblah}+\ind_{(\eps_1,\eps_2)\ne(-1,-1)}\cdot\frac{1}{2}\Big(1+\eta(\eps_1D_1D_3,\eps_2D_2D_3)\Big)\Big(\frac{-1}{D_{11}D_{21}}\Big)\bigg\} \\
&\phantom{blahblahblahblah}\times\Big(\frac{\eps_1}{D_{21}D_{31}}\Big)\Big(\frac{\eps_2}{D_{11}D_{31}}\Big)\Phi(D_{11},D_{21},D_{31}) \\
&\phantom{blah}+\bigg\{-\ind_{(\eps_1,\eps_2)\ne(\pm1,\mp1)}\psi(D_1D_2)-\ind_{(\eps_1,\eps_2)\ne(\pm1,-1)}\psi(D_2D_3) \\
&\phantom{blahblah}-\ind_{(\eps_1,\eps_2)\ne(-1,\pm1)}\psi(D_1D_3)+\ind_{(\eps_1,\eps_2)=(+1,+1)}\psi(D_1D_2D_3)\bigg\}.
\end{align*}
\item If $2\mid \gcd(d_1,d_2)$, put $D_3=\gcd(d_1,d_2)/2>0$, $D_1=d_1/\eps_12D_3$, $D_2=d_2/\eps_22D_3$. Then
\[
\ind_{D_4}(d_1,d_2,d_3)\;=\;\ind_{D_4}(\eps_12D_1D_3,\eps_22D_3D_3,\eps_1\eps_2D_1D_2)\;=\sum_{D_i=D_{i1}D_{i2}} F_{\even}(\vec{D};D_4)2^{-\omega(\vec{D})}g(\vec{D}),
\]
where we have put
\begin{align*}
F_{\even}(\vec{D};D_4)\;:&=\;\bigg\{\ind_{(\eps_1,\eps_2)\ne(+1,-1)}\cdot\frac{1}{2}\Big(1+\eta(\eps_1D_1D_3+2,\eps_2D_2D_3)\Big(\frac{2}{D_1D_2}\Big)\Big)\Big(\frac{-1}{D_{11}D_{31}}\Big) \\
&\phantom{blahblah}+\ind_{(\eps_1,\eps_2)\ne(-1,+1)}\cdot\frac{1}{2}\Big(1+\eta(\eps_1D_1D_3,\eps_2D_2D_3+2)\Big(\frac{2}{D_1D_2}\Big)\Big)\Big(\frac{-1}{D_{21}D_{31}}\Big) \\
&\phantom{blahblah}+\ind_{(\eps_1,\eps_2)\ne(-1,-1)}\cdot\frac{1}{2}\Big(1+\eta(\eps_1D_1D_3,\eps_2D_2D_3)\Big(\frac{2}{D_1D_2}\Big)\Big)\Big(\frac{-1}{D_{11}D_{21}}\Big)\bigg\} \\
&\phantom{blahblahblahblah}\times\Big(\frac{\eps_1}{D_{21}D_{31}}\Big)\Big(\frac{\eps_2}{D_{11}D_{31}}\Big)\Big(\frac{2}{D_{11}D_{21}}\Big)\Phi(D_{11},D_{21},D_{31}) \\
&\phantom{blah}+\bigg\{-\ind_{(\eps_1,\eps_2)\ne(\pm1,\mp1)}\psi(D_1D_2)-\ind_{(\eps_1,\eps_2)\ne(\pm1,-1)}\psi(D_2D_3) \\
&\phantom{blahblah}-\ind_{(\eps_1,\eps_2)\ne(-1,\pm1)}\psi(D_1D_3)+\ind_{(\eps_1,\eps_2)=(+1,+1)}\psi(D_1D_2D_3)\bigg\} \\
&\phantom{blahblahblahblah}\times\frac{1}{2}\Big(1+\Big(\frac{2}{D_1D_2}\Big)\Big).
\end{align*}
\end{itemize}
\end{lem}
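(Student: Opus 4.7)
The plan is to follow the proof of Lemma \ref{lem:q8-embedding-function} but add an inclusion-exclusion step reflecting the fact (Remark \ref{rem:d4-unique-cyclic}) that a biquadratic field may admit up to three distinct $D_4$-extensions, each cyclic over a different quadratic subfield. For $i=1,2,3$, let $A_i$ denote the set of admissible triples for which $\Q(\sqrt{d_1},\sqrt{d_2})$ admits a $D_4$-extension cyclic over $\Q(\sqrt{d_i})$ (setting $d_3 = d_1d_2/\gcd(d_1,d_2)^2$ and interpreting $\Q(\sqrt{d_3}) = \Q(\sqrt{d_1d_2})$). Since each $D_4$-extension is cyclic over exactly one of the three quadratic subfields, inclusion-exclusion gives
\[
\ind_{D_4}(d_1,d_2,d_3) \;=\; \sum_{i=1}^3 \ind_{A_i} \;-\; \sum_{i<j}\ind_{A_i\cap A_j} \;+\; \ind_{A_1\cap A_2\cap A_3}.
\]

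For each single indicator $\ind_{A_i}$, I would apply the corresponding case of Lemma \ref{lem:d4}: take the product of the local conditions across all primes, separate the $p=2$ condition as a global prefactor, and expand the product over odd primes as a divisor sum over $\vec D$ using $\frac{1}{2}(1+(a/p)) = 2^{-1}\sum_{d\in\{1,p\}}(a/d)$. Applying quadratic reciprocity exactly as in Lemma \ref{lem:q8-embedding-function} produces the common factor $\Phi(D_{11},D_{21},D_{31})\,g(\vec D)$, together with case-specific inner Legendre symbols of the form $(-1/\cdot)$ and $(\eps_j/\cdot)$ distinguishing the three cases, and the sign indicators $\ind_{(\eps_1,\eps_2)\ne(\cdots)}$ inherited from Lemma \ref{lem:d4}. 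The three resulting expressions then combine into the first bracketed three-term piece of $F_{\odd}(\vec D;D_4)$ (or $F_{\even}$ in the dyadic case).

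The key observation for the intersection terms is that at an odd prime $p$ either dividing $\gcd(d_1,d_2)$ or dividing exactly one of $d_1,d_2$, combining local conditions from two different cases of Lemma \ref{lem:d4} forces $(-1/p)=1$: for instance, at $p\mid d_1, p\nmid d_2$, case (ii) demands $(-d_2/p)=1$ while case (iii) demands $(d_2/p)=1$, so jointly $p\equiv 1\pmod 4$. Conceptually, via Theorem \ref{thm:dihedral}, the intersection $A_i\cap A_j$ forces two independent elements of $\Q^*$ to be norms from $\Q(\sqrt{d_k})$ (where $\{i,j,k\}=\{1,2,3\}$), which implies in particular that $-1$ is a norm, i.e.\ $d_k$ is a sum of two rational squares; this requires $d_k > 0$ and every odd prime divisor of $d_k$ to be $\equiv 1\pmod 4$. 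After carrying out the divisor-sum manipulation that handles the residual character conditions on the remaining $d$'s, the contribution $-\ind_{A_i\cap A_j}$ collapses to $-\ind_{d_k>0}\cdot\psi(|d_k|)$, and the triple intersection analogously yields $+\ind_{(\eps_1,\eps_2)=(+1,+1)}\cdot\psi(D_1D_2D_3)$. With the signs $(-,-,-,+)$, these assemble into the second bracketed four-term $\psi$-piece of $F_{\odd}$.

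The main obstacle is the careful $p=2$ bookkeeping, which behaves differently in the odd and dyadic cases. In the odd case ($2\nmid d_1d_2$), the $p=2$ contributions from the three cases assemble into the $\eta$-type prefactors displayed in the first bracket of $F_{\odd}$, and the pairwise intersections at $p=2$ impose no additional constraints on the $\psi$-piece. In the dyadic case ($2\mid\gcd(d_1,d_2)$), the $p=2$ conditions introduce the extra character factor $(2/D_{11}D_{21})$ inside the first bracket and, upon combining two of the three cases, produce the global prefactor $\frac{1}{2}(1+(2/D_1D_2))$ multiplying the $\psi$-terms in the second bracket (this latter factor encoding the $p=2$ portion of the joint Hasse-norm condition). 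Once the 2-adic analysis is carried out explicitly in each subcase, the remaining algebraic assembly into the stated form of $F_{\odd}$ (respectively $F_{\even}$) follows the pattern of Lemma \ref{lem:q8-embedding-function}.
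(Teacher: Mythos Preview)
Your proposal is correct and follows essentially the same route as the paper: inclusion--exclusion over the three cyclic-quadratic subcases, expansion of each $\ind_{A_i}$ as a divisor sum via the local conditions of Lemma~\ref{lem:d4} and quadratic reciprocity (producing the $\Phi\cdot g$ structure), and the observation that pairwise intersections of the local conditions force $p\equiv 1\pmod 4$ on the relevant prime divisors, collapsing the cross terms to the $\psi$-pieces. Your norm-theoretic interpretation of the intersection terms (that $A_i\cap A_j$ forces $-1$ to be a norm from $\Q(\sqrt{d_k})$, hence $d_k>0$ with all odd prime factors $\equiv 1\pmod 4$) is a nice conceptual gloss the paper does not make explicit, but the underlying computation is the same.
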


\begin{proof}
We proceed as in the proof of Lemma \ref{lem:q8-embedding-function}, though now we must separately consider the cases where $\Q(\sqrt{d_1},\sqrt{d_2},\sqrt{d_3})$ is cyclic over $M_1=\Q(\sqrt{d_1})$, $M_2=\Q(\sqrt{d_2})$, and $M_3=\Q(\sqrt{d_3})=\Q(\sqrt{d_1d_2})$, and then we combine them. Specifically, we define
\[
\ind_{M_i}(d_1,d_2,d_3)\;:=\;
\begin{cases}
1 & \text{if $\Q(\sqrt{d_1},\sqrt{d_2},\sqrt{d_3})$ admits a $D_4$-extension that is cyclic over $M_i$,} \\
0 & \text{otherwise},
\end{cases}
\]
and we write $\ind_{M_1}\ind_{M_2}$, etc. for products of the indicator functions. By inclusion-exclusion, we have the identity
\begin{equation}
  \ind_{D_4}
  \;=\;
  \ind_{M_1}
  +
  \ind_{M_2}
  +
  \ind_{M_3}
  -
  \ind_{M_1}\ind_{M_2}
  -
  \ind_{M_1}\ind_{M_3}
  -
  \ind_{M_2}\ind_{M_3}
  +
  \ind_{M_1}\ind_{M_2}\ind_{M_3}
                         \label{d4-to-mi}
\end{equation}
of functions of admissible triples.
This identity holds because each $D_4$-extension is cyclic over exactly one of the $M_i$, though a given biquadratic field $K$ may admit different $D_4$-extensions that are cyclic over different quadratic subfields $M_i$; see Remark \ref{rem:d4-unique-cyclic}.

First we assume $2\nmid d_1d_2$. In each case of being cyclic over a given $M_i$, we take a product over the local conditions in the relevant table in Lemma \ref{lem:d4}, which gives
\begin{align*}
\ind_{M_1}(d_1,d_2,d_3)\;&=\;\ind_{\sign(d_1,d_2)\ne(+1,-1)}\cdot\frac{1}{2}\Big(1+\eta(d_1+2,d_2)\Big) \\
&\phantom{blahblah}\times \prod_{\substack{p\mid d_1 \\ p\nmid d_2}}\frac{1}{2}\Big(1+\Big(\frac{d_2}{p}\Big)\Big)\prod_{\substack{p\nmid d_1 \\ p\mid d_2}}\frac{1}{2}\Big(1+\Big(\frac{-d_1}{p}\Big)\Big)\prod_{\substack{p\mid d_1 \\ p\mid d_2}}\frac{1}{2}\Big(1+\Big(\frac{d_{1,p}d_{2,p}}{p}\Big)\Big), \\
\ind_{M_2}(d_1,d_2,d_3)\;&=\;\ind_{\sign(d_1,d_2)\ne(-1,+1)}\cdot\frac{1}{2}\Big(1+\eta(d_1,d_2+2)\Big) \\
&\phantom{blahblah}\times \prod_{\substack{p\mid d_1 \\ p\nmid d_2}}\frac{1}{2}\Big(1+\Big(\frac{-d_2}{p}\Big)\Big)\prod_{\substack{p\nmid d_1 \\ p\mid d_2}}\frac{1}{2}\Big(1+\Big(\frac{d_1}{p}\Big)\Big)\prod_{\substack{p\mid d_1 \\ p\mid d_2}}\frac{1}{2}\Big(1+\Big(\frac{d_{1,p}d_{2,p}}{p}\Big)\Big),
\end{align*}
\begin{align*}
\ind_{M_3}(d_1,d_2,d_3)\;&=\;\ind_{\sign(d_1,d_2)\ne(-1,-1)}\cdot\frac{1}{2}\Big(1+\eta(d_1,d_2)\Big) \\
&\phantom{blahblah}\times \prod_{\substack{p\mid d_1 \\ p\nmid d_2}}\frac{1}{2}\Big(1+\Big(\frac{d_2}{p}\Big)\Big)\prod_{\substack{p\nmid d_1 \\ p\mid d_2}}\frac{1}{2}\Big(1+\Big(\frac{d_1}{p}\Big)\Big)\prod_{\substack{p\mid d_1 \\ p\mid d_2}}\frac{1}{2}\Big(1+\Big(\frac{-d_{1,p}d_{2,p}}{p}\Big)\Big).
\end{align*}
In each case put $d_1=\eps_1D_1D_3$ and $d_2=\eps_2D_2D_3$, where $D_3=\gcd(d_1,d_2)>0$ and $\eps_i=\sign(d_i)$, so the $D_i$ are odd, positive, squarefree, and pairwise coprime. We follow the same steps as in the proof of Lemma \ref{lem:q8-embedding-function} to express each of the above products as a sum, which gives
\begin{align*}
\ind_{M_1}(\eps_1D_1D_3,&\eps_2D_2D_3,\eps_1\eps_2D_1D_2)\;=\;\ind_{(\eps_1,\eps_2)\ne(+1,-1)}\cdot\frac{1}{2}\Big(1+\eta(\eps_1D_1D_3+2,\eps_2D_2D_3)\Big) \\
&\phantom{blahblah}\times\sum_{D_i=D_{i1}D_{i2}}\Big(\frac{\eps_1}{D_{21}D_{31}}\Big)\Big(\frac{\eps_2}{D_{11}D_{31}}\Big)\Big(\frac{-1}{D_{11}D_{31}}\Big)\Phi(D_{11},D_{21},D_{31})2^{-\omega(\vec{D})}g(\vec{D}), \\
\ind_{M_2}(\eps_1D_1D_3,&\eps_2D_2D_3,\eps_1\eps_2D_1D_2)\;=\;\ind_{(\eps_1,\eps_2)\ne(-1,+1)}\cdot\frac{1}{2}\Big(1+\eta(\eps_1D_1D_3,\eps_2D_2D_3+2)\Big) \\
&\phantom{blahblah}\times\sum_{D_i=D_{i1}D_{i2}}\Big(\frac{\eps_1}{D_{21}D_{31}}\Big)\Big(\frac{\eps_2}{D_{11}D_{31}}\Big)\Big(\frac{-1}{D_{21}D_{31}}\Big)\Phi(D_{11},D_{21},D_{31})2^{-\omega(\vec{D})}g(\vec{D}), \\
\ind_{M_3}(\eps_1D_1D_3,&\eps_2D_2D_3,\eps_1\eps_2D_1D_2)\;=\;\ind_{(\eps_1,\eps_2)\ne(-1,-1)}\cdot\frac{1}{2}\Big(1+\eta(\eps_1D_1D_3,\eps_2D_2D_3)\Big) \\
&\phantom{blahblah}\times\sum_{D_i=D_{i1}D_{i2}}\Big(\frac{\eps_1}{D_{21}D_{31}}\Big)\Big(\frac{\eps_2}{D_{11}D_{31}}\Big)\Big(\frac{-1}{D_{11}D_{21}}\Big)\Phi(D_{11},D_{21},D_{31})2^{-\omega(\vec{D})}g(\vec{D}).
\end{align*}
Next we evaluate the products $\ind_{M_i}\ind_{M_j}$. As we did above, we take a product over local conditions, which gives us
\begin{align*}
&\ind_{M_1}\ind_{M_2}(d_1,d_2,d_3)\;=\;\ind_{\sign(d_1,d_2)\ne(\pm1,\mp1)}\cdot\frac{1}{2}\Big(1+\eta(d_1+2,d_2)\Big)\frac{1}{2}\Big(1+\eta(d_1,d_2+2)\Big) \\
&\times \prod_{\substack{p\mid d_1 \\ p\nmid d_2}}\frac{1}{2}\Big(1+\Big(\frac{d_2}{p}\Big)\Big)\frac{1}{2}\Big(1+\Big(\frac{-d_2}{p}\Big)\Big)\prod_{\substack{p\nmid d_1 \\ p\mid d_2}}\frac{1}{2}\Big(1+\Big(\frac{d_1}{p}\Big)\Big)\frac{1}{2}\Big(1+\Big(\frac{-d_1}{p}\Big)\Big)\prod_{\substack{p\mid d_1 \\ p\mid d_2}}\frac{1}{2}\Big(1+\Big(\frac{d_{1,p}d_{2,p}}{p}\Big)\Big)^2, \\
&\phantom{\ind_{M_1}\ind_{M_2}(d_1,d_2,d_3)\;}=\;\ind_{\sign(d_1,d_2)\ne(\pm1,\mp1)}\cdot\frac{1}{2}\Big(1+\eta(d_1+2,d_2)\Big)\frac{1}{2}\Big(1+\eta(d_1,d_2+2)\Big) \\
&\times \prod_{\substack{p\mid d_1 \\ p\nmid d_2}}\frac{1}{2}\Big(1+\Big(\frac{d_2}{p}\Big)\Big)\frac{1}{2}\Big(1+\Big(\frac{-1}{p}\Big)\Big)\prod_{\substack{p\nmid d_1 \\ p\mid d_2}}\frac{1}{2}\Big(1+\Big(\frac{d_1}{p}\Big)\Big)\frac{1}{2}\Big(1+\Big(\frac{-1}{p}\Big)\Big)\prod_{\substack{p\mid d_1 \\ p\mid d_2}}\frac{1}{2}\Big(1+\Big(\frac{d_{1,p}d_{2,p}}{p}\Big)\Big).
\end{align*}
Just as above we put $d_1=\eps_1D_1D_3$ and $d_2=\eps_2D_2D_3$ with $D_3=\gcd(d_1,d_2)>0$ and $\sign(d_i)=\eps_i$. Following the same process as above, we end up with
\begin{align*}
\ind_{M_1}\ind_{M_2}(\eps_1D_1D_3,&\eps_2D_2D_3,\eps_1\eps_2D_1D_2)\;=\;\ind_{(\eps_1,\eps_2)\ne(\pm1,\mp1)}\cdot\frac{1}{2}\Big(1+\eta(\eps_1D_1D_3+2,\eps_2D_2D_3)\Big) \\
&\phantom{blah}\times\frac{1}{2}\Big(1+\eta(\eps_1D_1D_3,\eps_2D_2D_3+2)\Big)\sum_{D_i=D_{i1}D_{i2}}\psi(D_1D_2)\Big(\frac{\eps_1\eps_2}{D_{31}}\Big)2^{-\omega(\vec{D})}g(\vec{D}).
\end{align*}
Note that in deriving the above, we have used the fact that
\[
\Big(\frac{\eps_2}{D_{11}D_{31}}\Big)\Big(\frac{\eps_1}{D_{21}D_{31}}\Big)\Big(\frac{-1}{D_{11}D_{21}D_{31}}\Big)\Phi(D_{11},D_{21},D_{31})\;=\;\Big(\frac{\eps_1\eps_2}{D_{31}}\Big)
\]
because $D_{11}\equiv D_{21}\equiv \mymod{1}{4}$ in this case. The only nonzero terms in the above have $\eps_1=\eps_2$, and hence $(\eps_1\eps_2/D_{31})=1$ for these terms. Furthermore, using the conditions $\eps_1=\eps_2$ together with the conditions $D_1\equiv D_2\equiv \mymod{1}{4}$, we have
\[
\frac{1}{2}\Big(1+\eta(\eps_1D_1D_3+2,\eps_2D_2D_3)\Big)\frac{1}{2}\Big(1+\eta(\eps_1D_1D_3,\eps_2D_2D_3+2)\Big)\;=\;1.
\]
Thus we have
\[
\ind_{M_1}\ind_{M_2}(\eps_1D_1D_3,\eps_2D_2D_3,\eps_1\eps_2D_1D_2)\;=\;\ind_{(\eps_1,\eps_2)\ne(\pm1,\mp1)}\sum_{D_i=D_{i1}D_{i2}}\psi(D_1D_2)2^{-\omega(\vec{D})}g(\vec{D}).
\]
We follow the same steps for the terms $\ind_{M_1}\ind_{M_3}$ and $\ind_{M_2}\ind_{M_3}$, and we derive
\begin{align*}
\ind_{M_1}\ind_{M_3}(\eps_1D_1D_3,\eps_2D_2D_3,\eps_1\eps_2D_1D_2)\;&=\;\ind_{(\eps_1,\eps_2)\ne(\pm1,-1)}\sum_{D_i=D_{i1}D_{i2}}\psi(D_2D_3)2^{-\omega(\vec{D})}g(\vec{D}), \\
\ind_{M_2}\ind_{M_3}(\eps_1D_1D_3,\eps_2D_2D_3,\eps_1\eps_2D_1D_2)\;&=\;\ind_{(\eps_1,\eps_2)\ne(-1,\pm1)}\sum_{D_i=D_{i1}D_{i2}}\psi(D_1D_3)2^{-\omega(\vec{D})}g(\vec{D}),
\end{align*}
and for $\ind_{M_1}\ind_{M_2}\ind_{M_3}$ we get
\[
\ind_{M_1}\ind_{M_2}\ind_{M_3}(\eps_1D_1D_3,\eps_2D_2D_3,\eps_1\eps_2D_1D_2)\;=\;\ind_{(\eps_1,\eps_2)=(+1,+1)}\sum_{D_i=D_{i1}D_{i2}}\psi(D_1D_2D_3)2^{-\omega(\vec{D})}g(\vec{D}).
\]
Putting everything together using the identity \eqref{d4-to-mi} gives the result.

Now we have the case $2\mid\gcd(d_1,d_2)$. We proceed as before, taking a product over the local conditions from Lemma \ref{lem:d4}, which gives us
\begin{align*}
\ind_{M_1}(d_1,d_2,d_3)\;&=\;\ind_{\sign(d_1,d_2)\ne(+1,-1)}\cdot\frac{1}{2}\Big(1+\eta(d_{1,1}+2,d_{2,2})\Big(\frac{2}{d_{1,2}d_{2,2}}\Big)\Big) \\
&\phantom{blahblah}\times \prod_{\substack{p\mid d_1 \\ p\nmid d_2 \\ p>2}}\frac{1}{2}\Big(1+\Big(\frac{d_2}{p}\Big)\Big)\prod_{\substack{p\nmid d_1 \\ p\mid d_2 \\ p>2}}\frac{1}{2}\Big(1+\Big(\frac{-d_1}{p}\Big)\Big)\prod_{\substack{p\mid d_1 \\ p\mid d_2 \\ p>2}}\frac{1}{2}\Big(1+\Big(\frac{d_{1,p}d_{2,p}}{p}\Big)\Big),
\\
\ind_{M_2}(d_1,d_2,d_3)\;&=\;\ind_{\sign(d_1,d_2)\ne(-1,+1)}\cdot\frac{1}{2}\Big(1+\eta(d_{1,2},d_{2,2}+2)\Big(\frac{2}{d_{1,2}d_{2,2}}\Big)\Big) \\
&\phantom{blahblah}\times \prod_{\substack{p\mid d_1 \\ p\nmid d_2 \\ p>2}}\frac{1}{2}\Big(1+\Big(\frac{-d_2}{p}\Big)\Big)\prod_{\substack{p\nmid d_1 \\ p\mid d_2 \\ p>2}}\frac{1}{2}\Big(1+\Big(\frac{d_1}{p}\Big)\Big)\prod_{\substack{p\mid d_1 \\ p\mid d_2 \\ p>2}}\frac{1}{2}\Big(1+\Big(\frac{d_{1,p}d_{2,p}}{p}\Big)\Big),
\end{align*}
\begin{align*}
\ind_{M_3}(d_1,d_2,d_3)\;&=\;\ind_{\sign(d_1,d_2)\ne(-1,-1)}\cdot\frac{1}{2}\Big(1+\eta(d_{1,2},d_{2,2})\Big(\frac{2}{d_{1,2}d_{2,2}}\Big)\Big) \\
&\phantom{blahblah}\times \prod_{\substack{p\mid d_1 \\ p\nmid d_2 \\ p>2}}\frac{1}{2}\Big(1+\Big(\frac{d_2}{p}\Big)\Big)\prod_{\substack{p\nmid d_1 \\ p\mid d_2 \\ p>2}}\frac{1}{2}\Big(1+\Big(\frac{d_1}{p}\Big)\Big)\prod_{\substack{p\mid d_1 \\ p\mid d_2 \\ p>2}}\frac{1}{2}\Big(1+\Big(\frac{-d_{1,p}d_{2,p}}{p}\Big)\Big).
\end{align*}
In each case we put $d_1=\eps_12D_1D_3$ and $d_2=\eps_22D_2D_3$, where $D_3=\gcd(d_1,d_2)/2>0$ and $\eps_i=\sign(d_i)$, so that the $D_i$ are odd, positive, squarefree, and pairwise coprime. Following the same steps as above, we get
\begin{align*}
\ind_{M_1}(\eps_12D_1D_3,&\eps_22D_2D_3,\eps_1\eps_2D_1D_2)\;=\;\ind_{(\eps_1,\eps_2)\ne(+1,-1)}\cdot\frac{1}{2}\Big(1+\eta(\eps_1D_1D_3+2,\eps_2D_2D_3)\Big(\frac{2}{D_1D_2}\Big)\Big) \\
&\phantom{bl}\times\sum_{D_i=D_{i1}D_{i2}}\Big(\frac{\eps_1}{D_{21}D_{31}}\Big)\Big(\frac{\eps_2}{D_{11}D_{31}}\Big)\Big(\frac{-1}{D_{11}D_{31}}\Big)\Big(\frac{2}{D_{11}D_{21}}\Big)\Phi(D_{11},D_{21},D_{31})2^{-\omega(\vec{D})}g(\vec{D}), \\
\ind_{M_2}(\eps_12D_1D_3,&\eps_22D_2D_3,\eps_1\eps_2D_1D_2)\;=\;\ind_{(\eps_1,\eps_2)\ne(-1,+1)}\cdot\frac{1}{2}\Big(1+\eta(\eps_1D_1D_3,\eps_2D_2D_3+2)\Big)\Big(\frac{2}{D_1D_2}\Big)\Big) \\
&\phantom{bl}\times\sum_{D_i=D_{i1}D_{i2}}\Big(\frac{\eps_1}{D_{21}D_{31}}\Big)\Big(\frac{\eps_2}{D_{11}D_{31}}\Big)\Big(\frac{-1}{D_{21}D_{31}}\Big)\Big(\frac{2}{D_{11}D_{21}}\Big)\Phi(D_{11},D_{21},D_{31})2^{-\omega(\vec{D})}g(\vec{D}), \\
\ind_{M_3}(\eps_12D_1D_3,&\eps_22D_2D_3,\eps_1\eps_2D_1D_2)\;=\;\ind_{(\eps_1,\eps_2)\ne(-1,-1)}\cdot\frac{1}{2}\Big(1+\eta(\eps_1D_1D_3,\eps_2D_2D_3)\Big)\Big(\frac{2}{D_1D_2}\Big)\Big) \\
&\phantom{bl}\times\sum_{D_i=D_{i1}D_{i2}}\Big(\frac{\eps_1}{D_{21}D_{31}}\Big)\Big(\frac{\eps_2}{D_{11}D_{31}}\Big)\Big(\frac{-1}{D_{11}D_{21}}\Big)\Big(\frac{2}{D_{11}D_{21}}\Big)\Phi(D_{11},D_{21},D_{31})2^{-\omega(\vec{D})}g(\vec{D}).
\end{align*}
We follow the same steps as we did above to evaluate the products of the $\ind_{M_i}$, and we get
\begin{align*}
\ind_{M_1}\ind_{M_2}(\eps_12D_1D_3,\eps_22D_2D_3,\eps_1\eps_2D_1D_2)\;&=\;\ind_{(\eps_1,\eps_2)\ne(\pm1,-1)}\frac{1}{2}\Big(1+\Big(\frac{2}{D_1D_2}\Big)\Big) \\
&\phantom{blahblah}\times\sum_{D_i=D_{i1}D_{i2}}\psi(D_2D_3)2^{-\omega(\vec{D})}g(\vec{D}), \\
\ind_{M_1}\ind_{M_3}(\eps_12D_1D_3,\eps_22D_2D_3,\eps_1\eps_2D_1D_2)\;&=\;\ind_{(\eps_1,\eps_2)\ne(\pm1,-1)}\frac{1}{2}\Big(1+\Big(\frac{2}{D_1D_2}\Big)\Big) \\
&\phantom{blahblah}\times\sum_{D_i=D_{i1}D_{i2}}\psi(D_2D_3)2^{-\omega(\vec{D})}g(\vec{D}), \\
\ind_{M_2}\ind_{M_3}(\eps_12D_1D_3,\eps_22D_2D_3,\eps_1\eps_2D_1D_2)\;&=\;\ind_{(\eps_1,\eps_2)\ne(-1,\pm1)}\frac{1}{2}\Big(1+\Big(\frac{2}{D_1D_2}\Big)\Big) \\
&\phantom{blahblah}\times\sum_{D_i=D_{i1}D_{i2}}\psi(D_1D_3)2^{-\omega(\vec{D})}g(\vec{D}), \\
\ind_{M_1}\ind_{M_2}\ind_{M_3}(\eps_12D_1D_3,\eps_22D_2D_3,\eps_1\eps_2D_1D_2)\;&=\;\ind_{(\eps_1,\eps_2)=(+1,+1)}\frac{1}{2}\Big(1+\Big(\frac{2}{D_1D_2}\Big)\Big) \\
&\phantom{blahblah}\times\sum_{D_i=D_{i1}D_{i2}}\psi(D_1D_2D_3)2^{-\omega(\vec{D})}g(\vec{D}).
\end{align*}
Once again, putting everything together gives the result.
\end{proof}

\section{Averaging over discriminants}
  \label{sec:average}

In this section we use our results from the previous section to derive
expressions for the quantities
$
B_h^\sigma(X;G)
$
(defined in \eqref{bhxG}) that will be amenable to our subsequent analysis. As
we have seen above, there are many cases and sub-cases to consider, and it
would be tedious and convoluted to treat their analyses individually.
Therefore our goal is to uniformize our expressions as much as possible and
give essentially one common form of the expressions for the different functions
$
B_h^\sigma(X;G)
$.

We will use our notation from the previous section,
\[
\vec{D}\;=\;(D_{11},D_{12},D_{21},D_{22},D_{31},D_{32}),
\]
and recall that the variables
$
D_{ij}
$
always denote positive, odd, squarefree integers. Moreover, the variables
$
D_{ij}
$
are always pairwise coprime, regardless of how they are defined in terms of the
$
d_i
$
variables. We use the notations $\omega(\vec{D})$ and $g(\vec{D})$ as defined
in \eqref{omega-of-d} and \eqref{g-of-d}, and now we also put
\[
\Delta(\vec{D})\;:=\;(D_{11}D_{12}D_{21}D_{22}D_{31}D_{32})^2.
\]
We will want to consider the tuple $\vec{D}$ modulo 8, so we put $H=(\Z/8\Z)^\times$; for $\vec{w}\in H^6$, we will write $\vec{D}\equiv\vec{w}\;(8)$ to mean $D_{ij}\equiv\mymod{w_{ij}}{8}$ for all $1\le i\le3$, $1\le j\le 2$, where
\[
\vec{w}\;=\;(w_{11},w_{12},w_{21},w_{22},w_{31},w_{32}).
\]
Lastly, recall the quantities $m_h$ and $\tau_h$ defined in Section \ref{sec:parametrize}, which are defined for 
$
1\le h\le 4
$:
 $m_1=6$, $m_2=m_3=m_4=2$, and $\tau_1=1$, $\tau_2=\tau_3=16$, and $\tau_4=64$. Here we now also put
\[
T_h\;=\;
\begin{cases}
  \tau_h & \text{if }h=1\text{ or }3,
  \\
  4\tau_h & \text{if }h=2\text{ or }4;
\end{cases}
\]
explicitly we have $T_1=1$, $T_2=64$, $T_3=16$, and $T_4=256$.

In principle we are just summing over admissible triples
$
(d_1,d_2,d_3)
$,
and we use the notation $\flatsum$ to denote this condition on the triples
$
(d_1,d_2,d_3)
$.
However, we will then have to factor the variables
$
d_i
$
for
$
1\le i\le3
$,
first into variables
$
D_i, 1\le i\le 3
$,
and then further into variables
$
D_{ij}
$
where
$
1\le i\le 3,  1\le j\le 2
$.
The actual factorization depends on the particular case we are treating
(i.e.~depending on $G=Q_8$ or $D_4$, $1\le h\le 4$, and the signs $\sigma$).
However, as we factor variables and rearrange the summations, we will continue to use the same notation $\flatsum$ to refer to the conditions that, in any particular case, ensure that the variables of summation run through values such that $(d_1,d_2,d_3)$ is admissible. We hope that this abuse of notation is not confusing but in fact helpful to the reader, conveying the manipulations in a readable manner that is not overcrowded with notation.

In many cases, the implicit conditions are the usual ones (that we sum over positive, odd, squarefree integers), along with pairwise coprimality of the variables (after factoring the $d_i$), and that certain products of variables not be equal to $1$. As we go through the proof, we remind the reader of these conditions so as to help them not lose their bearing. 

At the end of our calculations (and thus in the statement of the below
propositions) we use the notation
$
\natsum_{\vec{D}}
$
to denote a sum over tuples of integers that are positive, odd, squarefree, and pairwise coprime---that is, we drop any additional conditions that guarantee that the underlying triples $(d_1,d_2,d_3)$ are admissible. This is for uniformity in our expressions and for simplicity in the analytic treatments to follow.

\begin{prop}[$Q_8$ case]
    \label{prop:fhq8}
If
$
\sigma\ne(+1,+1)
$,
we have
$
{B}_h^\sigma(X;Q_8)=0
$
for every
$
1\le h\le 4
$.
Otherwise, for
$
\sigma=(+1,+1)
$ we have
\begin{equation}
\label{q8case}
  {B}_h^\sigma(X;Q_8)
  \;=\;
  \frac{1}
       {m_h}
  \sum_{\vec{w}\in H^6}
       f_h(\vec{w};Q_8)
       \natsum_{\substack{
                            \Delta(\vec{D})\le X/T_h
                            \\
                            \vec{D}\;\equiv\;\vec{w}\;(8)
                         }
               }
       2^{-\omega(\vec{D})}
       g(\vec{D})
  \;+\;
    O(\sqrt{X}),
\end{equation}
where the functions
$
f_h(-;Q_8):H^6\to\{+1,0,-1\}
$
are given explicitly by
\begin{align*}
  f_1(\vec{w};Q_8)
  \;&:=\;
  \frac{1}{16}
  \Big(
      1+\Big(\frac{-1}{w_{11}w_{12}w_{31}w_{32}}\Big)
  \Big)
  \Big(
      1+\Big(\frac{-1}{w_{21}w_{22}w_{31}w_{32}}\Big)
  \Big)
      \Big(1+\Big(\frac{-1}{w_{11}w_{12}w_{21}w_{22}}\Big)
  \Big)
  \\
  &
  \phantom{blahblah}\times\Big(1+\eta(w_{11}w_{12}w_{31}w_{32},w_{21}w_{22}w_{31}w_{32})\Big(\frac{-1}{w_{11}w_{12}w_{21}w_{22}}\Big)\Big)\Phi( w_{11}, w_{21}, w_{31} ),
\end{align*}
\begin{align*}
  f_2(\vec{w};Q_8)
  \;&:=\;
  \frac{1}
       {4}
  \Big(
    1
    +
    \Big(
       \frac{-1}
            {w_{11}w_{12}w_{21}w_{22}}
    \Big)
  \Big)
  \Big(
     \frac{2}
          {w_{11}w_{21}}
  \Big)
  \\
  &
  \phantom{blahblah}
  \times
  \Big(
    1
    +
    \eta(w_{11}w_{12}w_{31}w_{32},w_{21}w_{22}w_{31}w_{32})
    \Big(
       \frac{-2}
            {w_{11}w_{12}w_{21}w_{22}}
    \Big)
  \Big)
  \Phi( w_{11}, w_{21}, w_{31} ),
  \\
  f_3(\vec{w};Q_8)\;&:=\;\frac{1}{16}\Big(1-\Big(\frac{-1}{w_{11}w_{12}w_{31}w_{32}}\Big)\Big)\Big(1-\Big(\frac{-1}{w_{21}w_{22}w_{31}w_{32}}\Big)\Big)\Big(1+\Big(\frac{-1}{w_{11}w_{12}w_{21}w_{22}}\Big)\Big)
  \\
  &\phantom{blahblah}\times\Big(1+\eta(w_{11}w_{12}w_{31}w_{32},w_{21}w_{22}w_{31}w_{32})\Big(\frac{-1}{w_{11}w_{12}w_{21}w_{22}}\Big)\Big)\Phi( w_{11}, w_{21}, w_{31} ),
  \\
  f_4(\vec{w};Q_8)\;&:=\;\frac{1}{4}\Big(1-\Big(\frac{-1}{w_{11}w_{12}w_{21}w_{22}}\Big)\Big)\Big(\frac{2}{w_{11}w_{21}}\Big)
  \\
  &
  \phantom{blahblah}\times\Big(1+\eta(w_{11}w_{12}w_{31}w_{32},w_{21}w_{22}w_{31}w_{32})\Big(\frac{-2}{w_{11}w_{12}w_{21}w_{22}}\Big)\Big)\Phi( w_{11}, w_{21}, w_{31} ).
\end{align*}
\end{prop}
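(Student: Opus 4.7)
The plan is to unfold the definition of $B_h^\sigma(X;Q_8)$, substitute the explicit formula for $\ind_{Q_8}$ from Lemma \ref{lem:q8-embedding-function}, change variables from the $d_i$ to the $D_{ij}$, and then reorganize the sum by residue classes modulo $8$.

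For the vanishing claim, Lemma \ref{lem:q8-embedding-function} gives $\ind_{Q_8}(d_1,d_2,d_3)=0$ whenever $(\eps_1,\eps_2)\ne(+1,+1)$. Since every admissible triple summed in the definition \eqref{bhxG} of $B_h^\sigma(X;Q_8)$ has $(\eps_1,\eps_2)=\sigma$, all summands vanish when $\sigma\ne(+1,+1)$.

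For $\sigma=(+1,+1)$, I would perform the substitution case by case. For $h=1,3$ (odd types), set $D_3=\gcd(d_1,d_2)>0$, $D_1=d_1/D_3$, $D_2=d_2/D_3$, so that $d_3=D_1 D_2$ and $d_1 d_2 d_3=(D_1D_2D_3)^2$. For $h=2,4$ (even types), set $D_3=\gcd(d_1,d_2)/2$, $D_1=d_1/(2D_3)$, $D_2=d_2/(2D_3)$, so that $d_1 d_2 d_3=4(D_1D_2D_3)^2$. In both situations the $D_i$ are positive, odd, squarefree, and pairwise coprime, so the further decomposition $D_i=D_{i1}D_{i2}$ in Lemma \ref{lem:q8-embedding-function} turns the condition $\tau_h d_1 d_2 d_3\le X$ into $T_h\Delta(\vec{D})\le X$ by the definition of $T_h$. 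The integrand becomes $F_h(\vec{D};Q_8)\cdot 2^{-\omega(\vec{D})}g(\vec{D})$, with $F_h=F_{\odd}$ or $F_h=F_{\even}$ according to the parity of the type.

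Next, I would observe that the factor $F_h(\vec{D};Q_8)$ depends on $\vec{D}$ only through residues modulo $8$: the Kronecker symbol $\eta(a,b)=(-1)^{\frac{a-1}{2}\frac{b-1}{2}}$ is determined by $a,b\pmod 4$, while the symbols $(-1/\cdot)$, $(\pm 2/\cdot)$, $(2/\cdot)$ depend on their argument modulo $8$. In addition, the Type-$h$ congruence conditions on $d_1,d_2,d_3$ modulo $4$ translate into constraints on the products $D_1D_3,\,D_2D_3,\,D_1D_2\pmod 4$, and each such constraint can be imposed via an indicator of the form $\tfrac12(1\pm(-1/w_{i_1j_1}w_{i_2j_2}w_{i_3j_3}w_{i_4j_4}))$. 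Splitting the outer sum as $\sum_{\vec w\in H^6}$ and restricting the inner sum to $\vec D\equiv\vec w\pmod 8$ pulls the mod-$8$-periodic portion outside. A direct comparison with the formulas for $F_{\odd}$ and $F_{\even}$ then produces exactly the function $f_h(\vec w;Q_8)$ written in the statement. The case $h=1$ yields $F_{\odd}$ times the three indicators $D_1D_3\equiv D_2D_3\equiv D_1D_2\equiv 1\pmod 4$; the case $h=3$ replaces the first two indicators by their $\equiv 3$ counterparts; the cases $h=2,4$ use $F_{\even}$ and the single residue condition $D_1D_2\equiv 1$ or $3\pmod 4$ (since $d_1,d_2\equiv 2\pmod 4$ is automatic from $d_i=2D_iD_3$), which explains the simpler shape of $f_2,f_4$.

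The main bookkeeping obstacle is verifying that this case analysis produces precisely the four stated formulas, and in particular that the Witt factor $\tfrac12(1+\eta(D_1D_3,D_2D_3)(-1/D_1D_2))$ (resp.~its $-2$ analogue) and the trilinear factor $\Phi(D_{11},D_{21},D_{31})$ combine cleanly with the indicators for the type conditions. Finally, to pass from the sum over admissible triples (implicit in the constraint that $(d_1,d_2,d_3)\in\mathcal S_h^\sigma(X)$) to the unrestricted sum $\natsum_{\vec D}$, I drop the conditions $d_i\ne 1$ and that the three $d_i$ be distinct. The discarded contributions correspond to tuples where some $D_i=1$; in each such case only two of the six variables $D_{ij}$ are free and their product is bounded by $(X/T_h)^{1/2}$, so a standard divisor estimate bounds the error by $O(\sqrt X)$, which is absorbed into the stated error term.
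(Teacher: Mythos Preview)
Your approach is essentially identical to the paper's: both unfold the definition of $B_h^\sigma(X;Q_8)$, insert the divisor-sum formula from Lemma~\ref{lem:q8-embedding-function}, change variables to the $D_{ij}$, impose the type-$h$ congruences via factors $\tfrac12(1\pm(-1/\cdot))$, split the sum over residue classes $\vec w\in H^6$, and then relax $\flatsum$ to $\natsum$ at cost $O(\sqrt X)$.

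One small imprecision worth tightening: the discarded terms when passing from $\flatsum$ to $\natsum$ are not those with ``some $D_i=1$'' but those with \emph{at least two} $D_i=1$ (equivalently, at least four of the $D_{ij}$ equal to~$1$), since admissibility forbids only that two of the $d_i$ coincide or equal~$1$, and each such failure forces two of the $D_i$ to be~$1$; tuples with exactly one $D_i=1$ are still admissible. Your follow-up clause ``only two of the six variables $D_{ij}$ are free'' and the $O(\sqrt X)$ bound are consistent with the correct set, so the argument goes through once this is stated precisely (and the paper invokes Lemma~\ref{sqfree-divisor-bound} for exactly this estimate).
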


\begin{proof}
For
$
{B}_1(X;Q_8)
$,
the admissible triples satisfy $d_i\equiv\mymod{1}{4}$, and so we have
\begin{align*}
  {B}_1(X;Q_8)
  \;&=\;
  \frac{1}
       {m_1}
  \sum_{(d_1,d_2,d_3)\in \mathcal{S}_1(X)}  \ind_{Q_8}(d_1,d_2,d_3)
  \\
  &=\;
  \frac{1}
       {m_1}
  \flatsum_{\tau_1d_1d_2d_3\le X}
  \frac{1}
       {2}
  \Big(
       1+\Big(\frac{-1}{d_1}\Big)
  \Big)
  \frac{1}
       {2}
  \Big(
       1+\Big(\frac{-1}{d_2}\Big)
  \Big)
  \frac{1}
       {2}
  \Big(
       1+\Big(\frac{-1}{d_3}\Big)
  \Big)
  \ind_{Q_8}(d_1,d_2,d_3),
\end{align*}
where we write
  $
  \sum^\flat
  $
  to indicate that the sum is taken over admissible triples $(d_1,d_2,d_3)$.
Using
the result of Lemma \ref{lem:q8-embedding-function} (in the case when $2\nmid d_1d_2$), we have
\begin{align*}
  {B}_1(X;Q_8)
  \;=\;
  \frac{1}{m_1}
  \flatsum_{\tau_1(D_1D_2D_3)^2\le X}
  &
  \frac{1}{8}\Big(1+\Big(\frac{-1}{D_1D_3}\Big)\Big)\Big(1+\Big(\frac{-1}{D_2D_3}\Big)\Big)\Big(1+\Big(\frac{-1}{D_1D_2}\Big)\Big)
  \\
  &
  \times \sum_{D_i=D_{i1}D_{i2}} F_{\odd}(\vec{D};Q_8)2^{-\omega(\vec{D})}g(\vec{D}), 
\end{align*}
where again
$
\sum^\flat
$
denotes that the sum runs over positive, odd, squarefree integers $D_1,D_2,D_3$ such that
$
(D_1D_3,D_2D_3,D_1D_2)
$
is an admissible triple---explicitly, this means that the $D_i$ are pairwise coprime, and no two of them are simultaneously equal to $1$. We now interchange the order of summation. The above then becomes
\[
   {B}_1(X;Q_8)
   \;=\;
   \frac{1}
        {m_1}
   \flatsum_{\tau_1\Delta(\vec{D})\le X}
        2^{-\omega(\vec{D})}
        \widetilde{f}(\vec{D})
        g(\vec{D}),
\]
where the sum is taken over tuples
$
\vec{D}=(D_{11},D_{12},D_{21},D_{22},D_{31},D_{32})
$
such that
\[
  (D_{11}D_{12}D_{31}D_{32},D_{21}D_{22}D_{31}D_{32},D_{11}D_{12}D_{21}D_{22})
\] is an admissible triple, and where we have put
\begin{align*}
\widetilde{f}(\vec{D})\;:&=\;\frac{1}{8}\Big(1+\Big(\frac{-1}{D_1D_3}\Big)\Big)\Big(1+\Big(\frac{-1}{D_2D_3}\Big)\Big)\Big(1+\Big(\frac{-1}{D_1D_2}\Big)\Big)F_{\odd}(\vec{D};Q_8) \\
&=\;\frac{1}{16}\Big(1+\Big(\frac{-1}{D_1D_3}\Big)\Big)\Big(1+\Big(\frac{-1}{D_2D_3}\Big)\Big)\Big(1+\Big(\frac{-1}{D_1D_2}\Big)\Big) \\
&\phantom{blahblah}\times\Big(1+\eta(D_{1}D_{3},D_2D_3)\Big(\frac{-1}{D_1D_2}\Big)\Big)\Phi( D_{11}, D_{21}, D_{31} ).
\end{align*}
Importantly, the function $\widetilde{f}$ above only depends on the tuple $\vec{D}$ modulo 8. Moreover, the variables $D_{ij}$ are odd. Therefore there exists a function $f_1(-;Q_8):H^6\to\{+1,0,-1\}$ such that $f_1(\vec{w};Q_8)=\widetilde{f}(\vec{D})$ if $\vec{D}\equiv\mymod{\vec{w}}{8}$; indeed, this is the function $f_1(\vec{w};Q_8)$ given in the statement of the proposition. This allows us to write
\[
{B}_1(X;Q_8)\;=\;\frac{1}{m_1}\sum_{\vec{w}\in H^6}f_1(\vec{w};Q_8)\flatsum_{\substack{\tau_1\Delta(\vec{D})\le X \\ \vec{D}\;\equiv\;\vec{w}\;(8) }}2^{-\omega(\vec{D})}g(\vec{D}).
\]
Finally, we adjust the conditions of summation for convenience in the analytic arguments to follow.
We use the notation
$
\sum^\natural
$
to indicate a sum taken over tuples $\vec{D}$ whose entries are odd, positive, squarefree integers that are pairwise coprime---that is, we drop the conditions in the $\sum^\flat$ summations above that certain products cannot be equal to 1. Terms that appear in such a $\sum^\natural$ summation but not in a $\sum^\flat$ one must have at least four
$
D_{ij}=1
$,
and by Lemma~\ref{sqfree-divisor-bound} in Section \ref{sec:lemma}, the total contribution of such terms is
$
O(\sqrt{X})
$. Therefore we have
\[
{B}_1(X;Q_8)\;=\;\frac{1}{m_1}\sum_{\vec{w}\in H^6}f_1(\vec{w};Q_8)\natsum_{\substack{\tau_1\Delta(\vec{D})\le X \\ \vec{D}\;\equiv\;\vec{w}\;(8) }}2^{-\omega(\vec{D})}g(\vec{D})\;+\;O(\sqrt{X}).
\]
Next, we treat ${B}_2$ similarly: we have
\[
{B}_2(X;Q_8)\;=\;\frac{1}{m_2}\flatsum_{\substack{\tau_2d_1d_2d_3\le X \\ d_1\equiv d_2\equiv2\;(4)}}\frac{1}{2}\Big(1+\Big(\frac{-1}{d_3}\Big)\Big)\ind_{Q_8}(d_1,d_2,d_3),
\]
where again $\sum^\flat$ denotes summation over admissible triples. As above, we
apply Lemma \ref{lem:q8-embedding-function} in the case when
$
2\mid\gcd(d_1,d_2)
$
and put $d_1=2D_1D_3$, $d_2=2D_2D_3$; that gives
\begin{align*}
{B}_2(X;Q_8)\;=\;\frac{1}{m_2}&\flatsum_{4\tau_2(D_1D_2D_3)^2\le X}\frac{1}{4}\Big(1+\Big(\frac{-1}{D_1D_2}\Big)\Big) 2^{-\omega(D_1 D_2 D_3)}
     \Big(1+\eta(D_{1}D_{3},D_2D_3)\Big(\frac{-2}{D_1D_2}\Big)\Big)
\\
   &\phantom{blahbl}\times \sum_{D_{i1}\mid D_i}
   \Big(\frac{2}{D_{11}D_{21}}\Big)
      \Phi( D_{11}, D_{21}, D_{31} )
      \Bigl(
        \frac{ D_{22} D_{32} }{ D_{11} }
      \Bigl)
      \Bigl(
        \frac{ D_{12} D_{32} }{ D_{21} }
      \Bigl)
      \Bigl(
        \frac{ D_{12} D_{22} }{ D_{31} }
      \Bigl).
\end{align*}
Following the same lines as in the above, we deduce that
\[
{B}_2(X;Q_8)\;=\;\frac{1}{m_2}\sum_{\vec{w}\in H^6}f_2(\vec{w};Q_8)\natsum_{\substack{4\tau_2\Delta(\vec{D})\le X \\ \vec{D}\;\equiv\;\vec{w}\;(8) }}2^{-\omega(\vec{D})}g(\vec{D})\;+\;O(\sqrt{X}),
\]
where again $f_2(\vec{w};Q_8)$ is defined in the statement of the proposition. We handle ${B}_3$ and ${B}_4$ in the same way, which completes the proof.
\end{proof}

\begin{remm}
  Of course, we know already from Corollary \ref{cor:b3q8} that
  $
  {B}_3(X;Q_8)=0
  $,
  so we have no need for the formula for it in Proposition \ref{prop:fhq8}.
  Nevertheless we keep the formula there for completeness' sake.
  The fact that ${B}_3(X;Q_8)=0$ is later reflected ``analytically'' in the proof of Theorem \ref{thm:main-thm}, when (in the notation there) we find that $S_3^\sigma(Q_8)=0$.
\end{remm}

Next we give the corresponding proposition for the $D_4$ case. While the expressions that follow (especially the explicit formulas for the functions $f_h^\sigma$ and $r_h^\sigma$) look more complicated, these intricacies are  superficial. The essential feature is that equations \eqref{q8case} and \eqref{d4case} are of a nearly identical shape analytically speaking, and this allows for a uniform analytic treatment in the sections to follow. The only significant difference is the presence of the summation over the function $r_h^\sigma$ in \eqref{d4case}, which has no analogue in \eqref{q8case}. However, we will show in Section \ref{sec:asymptotic_evaluation_of_b_h_sigma_x_g_} that these terms contribute negligibly to the quantity $B_h^\sigma(X;D_4)$.

\begin{prop}[$D_4$ case]
  \label{prop:fhd4}
  Write
  $
  \sigma=(\eps_1,\eps_2)
  $,
  where each
  $
  \eps_i\in\{\pm1\}
  $.
  We have
\begin{align}
\label{d4case}
  B_h^\sigma(X;D_4)
  \;&=\;
  \frac{1}{m_h}\sum_{\vec{w}\in H^6}f_h^\sigma(\vec{w};D_4)\natsum_{\substack{\Delta(\vec{D})\le X/T_h \\ \vec{D}\;\equiv\;\vec{w}\;(8) }}2^{-\omega(\vec{D})}g(\vec{D})\\
&\phantom{blahblah}+\natsum_{\Delta(\vec{D})\le X/T_h}r_h^\sigma(\vec{D};D_4)2^{-\omega(\vec{D})}g(\vec{D})\;+\;O(\sqrt{X}), \nonumber
\end{align}
where the functions $f_h^\sigma$ are bounded, integer-valued functions on $H^6$ (we have $|f_h^\sigma(\vec{w};D_4)|\le3$ for all $\vec{w}\in H^6$), and they are given explicitly by
\begin{align*}
  &
  f_1^\sigma(\vec{w};D_4)\;:=\;\frac{1}{8}\Big(1+\eps_1\Big(\frac{-1}{w_{11}w_{12}w_{31}w_{32}}\Big)\Big)\Big(1+\eps_2\Big(\frac{-1}{w_{21}w_{22}w_{31}w_{32}}\Big)\Big)
  \\
  &
  \phantom{f_1^\sigma(\vec{w};D_4)blah}\times\;\Big(1+\eps_1\eps_2\Big(\frac{-1}{w_{11}w_{12}w_{21}w_{22}}\Big)\Big)\Big(\frac{\eps_1}{w_{21}w_{31}}\Big)\Big(\frac{\eps_2}{w_{11}w_{31}}\Big)\Phi(w_{11},w_{21},w_{31})
  \\
  &
  \phantom{f_1^\sigma(\vec{w};D_4)blah}\times\;\bigg\{\ind_{\sigma\ne(+1,-1)}\cdot\frac{1}{2}\Big(1+\eta(\eps_1w_{11}w_{12}w_{31}w_{32}+2,\eps_2w_{21}w_{22}w_{31}w_{32})\Big)\Big(\frac{-1}{w_{11}w_{31}}\Big)
  \\
  &
  \phantom{f_1^\sigma(\vec{w};D_4)blahblah}+\ind_{\sigma\ne(-1,+1)}\cdot\frac{1}{2}\Big(1+\eta(\eps_1w_{11}w_{12}w_{31}w_{32},\eps_2w_{21}w_{22}w_{31}w_{32}+2)\Big)\Big(\frac{-1}{w_{21}w_{31}}\Big)
  \\
  &
  \phantom{f_1^\sigma(\vec{w};D_4)blahblah}+\ind_{\sigma\ne(-1,-1)}\cdot\frac{1}{2}\Big(1+\eta(\eps_1w_{11}w_{12}w_{31}w_{32},\eps_2w_{21}w_{22}w_{31}w_{32})\Big)\Big(\frac{-1}{w_{11}w_{21}}\Big)\bigg\},
\end{align*}
\begin{align*}
  &
  f_2^\sigma(\vec{w};D_4)\;:=\;\frac{1}{2}\Big(1+\eps_1\eps_2\Big(\frac{-1}{w_{11}w_{12}w_{21}w_{22}}\Big)\Big)\Big(\frac{\eps_1}{w_{21}w_{31}}\Big)\Big(\frac{\eps_2}{w_{11}w_{31}}\Big)\Big(\frac{2}{w_{11}w_{21}}\Big)\Phi(w_{11},w_{21},w_{31})
  \\
  &\phantom{blah}\times\;\bigg\{\ind_{\sigma\ne(+1,-1)}\cdot\frac{1}{2}\Big(1+\eta(\eps_1w_{11}w_{12}w_{31}w_{32}+2,\eps_2w_{21}w_{22}w_{31}w_{32})\Big(\frac{2}{w_{11}w_{12}w_{21}w_{22}}\Big)\Big)\Big(\frac{-1}{w_{11}w_{31}}\Big)
  \\
  &
  \phantom{blahblah}+\ind_{\sigma\ne(-1,+1)}\cdot\frac{1}{2}\Big(1+\eta(\eps_1w_{11}w_{12}w_{31}w_{32},\eps_2w_{21}w_{22}w_{31}w_{32}+2)(\frac{2}{w_{11}w_{12}w_{21}w_{22}}\Big)\Big)\Big(\frac{-1}{w_{21}w_{31}}\Big)
  \\
  &
  \phantom{blahblah}+\ind_{\sigma\ne(-1,-1)}\cdot\frac{1}{2}\Big(1+\eta(\eps_1w_{11}w_{12}w_{31}w_{32},\eps_2w_{21}w_{22}w_{31}w_{32})(\frac{2}{w_{11}w_{12}w_{21}w_{22}}\Big)\Big)\Big(\frac{-1}{w_{11}w_{21}}\Big)\bigg\},
\end{align*}
\begin{align*}
  &
  f_3^\sigma(\vec{w};D_4)\;:=\;\frac{1}{8}\Big(1-\eps_1\Big(\frac{-1}{w_{11}w_{12}w_{31}w_{32}}\Big)\Big)\Big(1-\eps_2\Big(\frac{-1}{w_{21}w_{22}w_{31}w_{32}}\Big)\Big)
  \\
  &\phantom{f_3^\sigma(\vec{w};D_4)blah}\times\;\Big(1+\eps_1\eps_2\Big(\frac{-1}{w_{11}w_{12}w_{21}w_{22}}\Big)\Big)\Big(\frac{\eps_1}{w_{21}w_{31}}\Big)\Big(\frac{\eps_2}{w_{11}w_{31}}\Big)\Phi(w_{11},w_{21},w_{31})
  \\
  &\phantom{f_3^\sigma(\vec{w};D_4)blah}\times\;\bigg\{\ind_{\sigma\ne(+1,-1)}\cdot\frac{1}{2}\Big(1+\eta(\eps_1w_{11}w_{12}w_{31}w_{32}+2,\eps_2w_{21}w_{22}w_{31}w_{32})\Big)\Big(\frac{-1}{w_{11}w_{31}}\Big)
  \\
  &
  \phantom{f_3^\sigma(\vec{w};D_4)blahblah}+\ind_{\sigma\ne(-1,+1)}\cdot\frac{1}{2}\Big(1+\eta(\eps_1w_{11}w_{12}w_{31}w_{32},\eps_2w_{21}w_{22}w_{31}w_{32}+2)\Big)\Big(\frac{-1}{w_{21}w_{31}}\Big)
  \\
  &\phantom{f_3^\sigma(\vec{w};D_4)blahblah}+\ind_{\sigma\ne(-1,-1)}\cdot\frac{1}{2}\Big(1+\eta(\eps_1w_{11}w_{12}w_{31}w_{32},\eps_2w_{21}w_{22}w_{31}w_{32})\Big)\Big(\frac{-1}{w_{11}w_{21}}\Big)\bigg\},
\end{align*}
\begin{align*}
  &
  f_4^\sigma(\vec{w};D_4)\;:=\;\frac{1}{2}\Big(1-\eps_1\eps_2\Big(\frac{-1}{w_{11}w_{12}w_{21}w_{22}}\Big)\Big)\Big(\frac{\eps_1}{w_{21}w_{31}}\Big)\Big(\frac{\eps_2}{w_{11}w_{31}}\Big)\Big(\frac{2}{w_{11}w_{21}}\Big)\Phi(w_{11},w_{21},w_{31}) \\
&\phantom{blah}\times\;\bigg\{\ind_{\sigma\ne(+1,-1)}\cdot\frac{1}{2}\Big(1+\eta(\eps_1w_{11}w_{12}w_{31}w_{32}+2,\eps_2w_{21}w_{22}w_{31}w_{32})\Big(\frac{2}{w_{11}w_{12}w_{21}w_{22}}\Big)\Big)\Big(\frac{-1}{w_{11}w_{31}}\Big) \\
&\phantom{blahblah}+\ind_{\sigma\ne(-1,+1)}\cdot\frac{1}{2}\Big(1+\eta(\eps_1w_{11}w_{12}w_{31}w_{32},\eps_2w_{21}w_{22}w_{31}w_{32}+2)(\frac{2}{w_{11}w_{12}w_{21}w_{22}}\Big)\Big)\Big(\frac{-1}{w_{21}w_{31}}\Big) \\
&\phantom{blahblah}+\ind_{\sigma\ne(-1,-1)}\cdot\frac{1}{2}\Big(1+\eta(\eps_1w_{11}w_{12}w_{31}w_{32},\eps_2w_{21}w_{22}w_{31}w_{32})(\frac{2}{w_{11}w_{12}w_{21}w_{22}}\Big)\Big)\Big(\frac{-1}{w_{11}w_{21}}\Big)\bigg\},
\end{align*}
and the functions $r_h^\sigma(\vec{D};D_4)$ are bounded, integer-valued functions (with $|r_h^\sigma(\vec{D};D_4)|\le4$ for all tuples $\vec{D}$), and they are given by
\begin{align*}
r_1^\sigma(\vec{D};D_4)\;&:=\;\frac{1}{8}\Big(1+\eps_1\Big(\frac{-1}{D_{11}D_{12}D_{31}D_{32}}\Big)\Big)\Big(1+\eps_2\Big(\frac{-1}{D_{21}D_{22}D_{31}D_{32}}\Big)\Big) \\
&\phantom{blahblah}\times\Big(1+\eps_1\eps_2\Big(\frac{-1}{D_{11}D_{12}D_{21}D_{22}}\Big)\Big)\bigg\{-\ind_{\sigma\ne(\pm1,\mp1)}\psi(D_{11}D_{12}D_{21}D_{22}) \\
&\phantom{blahblahblah}-\ind_{\sigma\ne(\pm1,-1)}\psi(D_{21}D_{22}D_{31}D_{32})-\ind_{\sigma\ne(-1,\pm1)}\psi(D_{11}D_{12}D_{31}D_{32}) \\
&\phantom{blahblahblahblah}+\ind_{\sigma=(+1,+1)}\psi(D_{11}D_{12}D_{21}D_{22}D_{31}D_{32})\bigg\}, \\
r_2^\sigma(\vec{D};D_4)\;&:=\;\frac{1}{4}\Big(1+\eps_1\eps_2\Big(\frac{-1}{D_{11}D_{12}D_{21}D_{22}}\Big)\Big)\Big(1+\Big(\frac{2}{D_{11}D_{12}D_{21}D_{22}}\Big)\Big) \\
&\phantom{blahblah}\times\bigg\{-\ind_{\sigma\ne(\pm1,\mp1)}\psi(D_{11}D_{12}D_{21}D_{22})-\ind_{\sigma\ne(\pm1,-1)}\psi(D_{21}D_{22}D_{31}D_{32}) \\
&\phantom{blahblahblah}-\ind_{\sigma\ne(-1,\pm1)}\psi(D_{11}D_{12}D_{31}D_{32})+\ind_{\sigma=(+1,+1)}\psi(D_{11}D_{12}D_{21}D_{22}D_{31}D_{32})\bigg\},
\end{align*}
\begin{align*}
r_3^\sigma(\vec{D};D_4)\;&:=\;\frac{1}{8}\Big(1-\eps_1\Big(\frac{-1}{D_{11}D_{12}D_{31}D_{32}}\Big)\Big)\Big(1-\eps_2\Big(\frac{-1}{D_{21}D_{22}D_{31}D_{32}}\Big)\Big) \\
&\phantom{blahblah}\times\Big(1+\eps_1\eps_2\Big(\frac{-1}{D_{11}D_{12}D_{21}D_{22}}\Big)\Big)\bigg\{-\ind_{\sigma\ne(\pm1,\mp1)}\psi(D_{11}D_{12}D_{21}D_{22}) \\
&\phantom{blahblahblah}-\ind_{\sigma\ne(\pm1,-1)}\psi(D_{21}D_{22}D_{31}D_{32})-\ind_{\sigma\ne(-1,\pm1)}\psi(D_{11}D_{12}D_{31}D_{32}) \\
&\phantom{blahblahblahblah}+\ind_{\sigma=(+1,+1)}\psi(D_{11}D_{12}D_{21}D_{22}D_{31}D_{32})\bigg\}, \\
r_4^\sigma(\vec{D};D_4)\;&:=\;\frac{1}{4}\Big(1-\eps_1\eps_2\Big(\frac{-1}{D_{11}D_{12}D_{21}D_{22}}\Big)\Big)\Big(1+\Big(\frac{2}{D_{11}D_{12}D_{21}D_{22}}\Big)\Big) \\
&\phantom{blahblah}\times\bigg\{-\ind_{\sigma\ne(\pm1,\mp1)}\psi(D_{11}D_{12}D_{21}D_{22})-\ind_{\sigma\ne(\pm1,-1)}\psi(D_{21}D_{22}D_{31}D_{32}) \\
&\phantom{blahblahblah}-\ind_{\sigma\ne(-1,\pm1)}\psi(D_{11}D_{12}D_{31}D_{32})+\ind_{\sigma=(+1,+1)}\psi(D_{11}D_{12}D_{21}D_{22}D_{31}D_{32})\bigg\}.
\end{align*}

\end{prop}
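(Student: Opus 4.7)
The plan is to mirror the proof of Proposition \ref{prop:fhq8}, adding the bookkeeping needed to handle the inclusion-exclusion identity \eqref{d4-to-mi} as it manifests in Lemma \ref{lem:d4-embedding-function}. First, for each $h \in \{1,2,3,4\}$ and each sign pair $\sigma = (\eps_1,\eps_2)$, I would start from
\[
B_h^\sigma(X;D_4) \;=\; \frac{1}{m_h} \flatsum_{(d_1,d_2,d_3) \in \mathcal{S}_h^\sigma(X)} \ind_{D_4}(d_1,d_2,d_3),
\]
and encode the type-$h$ congruence restrictions on the $d_i$ modulo $4$ by inserting factors of the form $\frac{1}{2}(1 \pm (-1/d_i))$ inside the sum, just as in the $Q_8$ case. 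The signature $\sigma$ would be tracked by writing $d_1 = \eps_1 D_1 D_3$ and $d_2 = \eps_2 D_2 D_3$ with $D_3 = \gcd(d_1,d_2)$ (or $\gcd(d_1,d_2)/2$ in the even subcase), so that the $D_i$ are odd, positive, and pairwise coprime.

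Next, I would substitute the formula for $\ind_{D_4}$ from Lemma \ref{lem:d4-embedding-function} and split the summand into two pieces: a \emph{character piece} (the first curly bracket in $F_{\odd}(\vec{D};D_4)$ or $F_{\even}(\vec{D};D_4)$) involving $\Phi(D_{11},D_{21},D_{31})$, the Kronecker factors $(\eps_i/*)$, $(-1/*)$, $(2/*)$, the $\eta$-terms, and the $\sigma$-indicator functions; and a \emph{signature piece} consisting of the final curly bracket of signed $\psi$-terms that arise from the inclusion-exclusion over which quadratic subfield $M_i$ the $D_4$-extension is cyclic over. For the character piece, I would follow exactly the steps of the $Q_8$ proof: factor each $D_i = D_{i1}D_{i2}$, interchange the inner divisor sum with the outer sum over $D_1,D_2,D_3$, pull out the factor $g(\vec{D}) 2^{-\omega(\vec{D})}$, and note that everything remaining depends only on $\vec{D} \bmod 8$ together with the fixed $\eps_i$. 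Bundling this residue-class-valued coefficient would produce the function $f_h^\sigma(\vec{w};D_4)$; the bound $|f_h^\sigma| \le 3$ follows from its definition as a sum of at most three sign-indicator blocks, each a $\{0,\pm 1\}$-valued product.

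For the signature piece, the functions $\psi(\cdot)$ depend on every prime divisor of their arguments, not merely on residues modulo $8$, so they cannot be folded into a function on $H^6$. I would therefore leave the $\psi$-block as is, multiplied by the type-$h$ congruence factors, and absorb the result into the function $r_h^\sigma(\vec{D};D_4)$; the bound $|r_h^\sigma| \le 4$ is immediate by inspection, since there are four signed $\psi$-terms, each of absolute value at most $1$, attenuated by $\sigma$-indicators. Combining the two pieces would yield the body of \eqref{d4case}, after which the passage from $\flatsum$ to $\natsum$ is handled identically to the $Q_8$ case: the admissibility condition $\flatsum$ forbids two or more of the relevant products from being $1$, and dropping it to obtain $\natsum$ introduces only configurations in which at least four of the six $D_{ij}$ equal $1$, whose total contribution is $O(\sqrt{X})$ by Lemma~\ref{sqfree-divisor-bound} from Section \ref{sec:lemma}.

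The main obstacle is combinatorial rather than analytic: keeping the three inclusion-exclusion cases from Lemma \ref{lem:d4} (indexed by the intermediate quadratic subfield $M_i$), the signature $\sigma = (\eps_1,\eps_2)$, the parity subcase of $h$, and the factorizations $D_i = D_{i1}D_{i2}$ all consistent, without sign errors when applying quadratic reciprocity and the supplementary laws for $(-1/n)$ and $(2/n)$. That bookkeeping is what forces the explicit formulas for $f_h^\sigma$ and $r_h^\sigma$ to look so intricate, even though analytically they play the same role as the simpler $f_h(\vec{w};Q_8)$ of Proposition \ref{prop:fhq8}.
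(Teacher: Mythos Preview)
Your proposal is correct and follows essentially the same approach as the paper's own proof: start from the definition of $B_h^\sigma(X;D_4)$, insert the type-$h$ congruence factors, apply Lemma~\ref{lem:d4-embedding-function}, interchange the divisor sum with the outer sum, and then split the summand into the part depending only on $\vec{D}\bmod 8$ (yielding $f_h^\sigma$) and the $\psi$-block that does not (yielding $r_h^\sigma$), before relaxing $\flatsum$ to $\natsum$ via Lemma~\ref{sqfree-divisor-bound}. Your identification of the key distinction from the $Q_8$ case---that $F_{\odd}(\vec{D};D_4)$ is not purely a function of $\vec{D}\bmod 8$, forcing the secondary $r_h^\sigma$ summation---matches the paper's reasoning exactly.
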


\begin{proof}
  Fix $\sigma=(\eps_1,\eps_2)$, where $\eps_i=\sign d_i$. Proceeding as in the
  $Q_8$
  case above, we have
\begin{align*}
  B_1^\sigma(X;D_4)\;&=\;\frac{1}{m_1}\sum_{(d_1,d_2,d_3)\in \mathcal{S}_1^\sigma(X)}\ind_{D_4}(d_1,d_2,d_3)
  \\
&=\;\frac{1}{m_1}\flatsum_{\substack{\tau_1d_1d_2d_3\le X \\ \sign(d_1,d_2)=\sigma}}\frac{1}{2}\Big(1+\Big(\frac{-1}{d_1}\Big)\Big)\frac{1}{2}\Big(1+\Big(\frac{-1}{d_2}\Big)\Big)\frac{1}{2}\Big(1+\Big(\frac{-1}{d_3}\Big)\Big)\ind_{D_4}(d_1,d_2,d_3),
\end{align*}
We use the result of Lemma \ref{lem:d4-embedding-function} above (in the case where $2\nmid d_1d_2$), which gives
\begin{align*}
  B_1^\sigma(X;D_4)
  \;=\;
  \frac{1}{m_1}\flatsum_{\substack{\tau_1(D_1D_2D_3)^2\le X \\ \sigma=(\eps_1,\eps_2)}}&\frac{1}{8}\Big(1+\eps_1\Big(\frac{-1}{D_1D_3}\Big)\Big)\Big(1+\eps_2\Big(\frac{-1}{D_2D_3}\Big)\Big)\Big(1+\eps_1\eps_2\Big(\frac{-1}{D_1D_2}\Big)\Big) \\
&\times \sum_{D_i=D_{i1}D_{i2}} F_{\odd}(\vec{D};D_4)2^{-\omega(\vec{D})}g(\vec{D}). 
\end{align*}
As before, we interchange the order of summation. Unlike before, the function $F_{\odd}(\vec{D};D_4)$ does not only depend on the tuple $\vec{D}$ modulo 8. So we split up those terms from $F_{\odd}(\vec{D};D_4)$ that only depend on $\vec{D}$ modulo 8, and put the rest in a secondary summation. This gives
\[
B_1^\sigma(X;D_4)
\;=\;
\frac{1}{m_1}\sum_{\vec{w}\in H^6}f_1^\sigma(\vec{w};D_4)\flatsum_{\substack{\tau_1\Delta(\vec{D})\le X \\ \vec{D}\;\equiv\;\vec{w}\;(8) }}2^{-\omega(\vec{D})}g(\vec{D})\;+\flatsum_{\tau_1\Delta(\vec{D})\le X}r_1^\sigma(\vec{D};D_4)2^{-\omega(\vec{D})}g(\vec{D}),
\]
where $f_1^\sigma(\vec{w};D_4)$ and $r_1^\sigma(\vec{w};D_4)$ are the functions given in the statement of the proposition. Finally, as in the $Q_8$ case, we drop the conditions in the $\flatsum$ summations that require certain products of the $D_{ij}$ to not be 1: this makes them into $\natsum$ summations, at the cost of the $O(\sqrt{X})$ term. We handle the other
$
B_h^\sigma(X;D_4)
$
the same way, which completes the proof.
\end{proof}

\section{Character sums over linked variables}
\label{sec:linkvars}

Our results in the last section give uniform expressions for the quantities
$
B_h^\sigma(X;G)
$:
each one is a sum over a function of tuples $\vec{w}\in H^6$ times an inner character sum of the form
\begin{equation}
  G_{\vec{w}}(x;\Psi)
  \;:=\;
  \natsum_{
    \substack{
      \Delta(\vec{D})\le x
      \\
      \vec{D}\;\equiv\;\vec{w}\;(8)
    }
  }
  2^{-\omega(\vec{D})}
  \Psi(\vec{D})
  g(\vec{D}),
             \label{gw}
\end{equation}
whose dependence on $\vec{w}$ is much more mild. Here we are using the notation
\[
\Psi(\vec{D})\;:=\;\prod_{ij}\psi(D_{ij})^{\delta_{ij}}
\]
to denote a product of the functions $\psi(D_{ij})$ (for $1\le i\le3$, $1\le j\le 2$) where each $\delta_{ij}=0$ or
$1$
(and is fixed for the entire sum
    $
    G_w(x; \Psi)
    $).
    Specifically, $\Psi$ is the characteristic function for the set of tuples
    $
    \mathbf D
    $
    where those variables $D_{ij}$ with
    $
    \delta_{ij}=1
    $
    are divisible only by primes
    $
    \equiv\mymod{1}{4}
    $.

In the $Q_8$ case, no such restrictions appear---that is, $\delta_{ij}=0$ for all $i,j$, and the function $\Psi$ plays no role. The same is true for the first
terms of the expressions in the $D_4$ case. For the second terms in the $D_4$ case---those involving the functions $r_h^\sigma(\vec{D};D_4)$---one splits the summation among the four terms of the function $r_h^\sigma(\vec{D};D_4)$, and then each of those terms can be easily expressed in terms of $G_{\vec{w}}(x;\Psi)$ for different fixed choices of the $\delta_{ij}$. Therefore, after asymptotically evaluating the sum $G_{\vec{w}}(x;\Psi)$, all that is left to do is to evaluate the remaining finite sum over $\vec{w}\in H^6$; this is carried out in Section \ref{sec:asymptotic_evaluation_of_b_h_sigma_x_g_}.

Thus in this section we evaluate the sum $G_{\vec{w}}(x;\Psi)$; our result is the following.

\begin{prop}
\label{prop:gwx}
We have
\[
G_{\vec{w}}(x;\Psi)\;=\;\frac{\kappa_0}{4^3}\sqrt{x}(\log\sqrt{x})^{1/2}Y(\vec{w};\Psi)\;+\;O(\sqrt{x}(\log x)^{1/4}),
\]
where $\kappa_0>0$ is an absolute constant defined in {\rm (\ref{kappa-zero})}, and 
\[
Y(\vec{w};\Psi)\;=\;\ind_{w_{11}=w_{21}=w_{31}=1}\ind_{\delta_{12}=\delta_{22}=\delta_{32}=0}+\ind_{w_{12}=w_{22}=w_{32}=1}\ind_{\delta_{11}=\delta_{21}=\delta_{31}=0}.
\]
\end{prop}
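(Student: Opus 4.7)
The plan is to decompose $G_{\vec{w}}(x;\Psi)$ according to which of the variables $D_{ij}$ are equal to $1$, identify two ``diagonal'' configurations that give the main term, and bound the remaining contributions via bilinear estimates for Jacobi symbols.

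First, write $g(\vec{D}) = \prod_{i \ne j}\bigl(\frac{D_{j2}}{D_{i1}}\bigr)$, and isolate two degenerate configurations in which $g \equiv 1$: Case~(A) where $D_{11}=D_{21}=D_{31}=1$, which forces $w_{11}=w_{21}=w_{31}=1$; and Case~(B) where $D_{12}=D_{22}=D_{32}=1$, which forces $w_{12}=w_{22}=w_{32}=1$. In Case~(A) the conditions $\psi(D_{i1})^{\delta_{i1}}=1$ are automatic since $\psi(1)=1$, while the factors $\psi(D_{i2})^{\delta_{i2}}$ impose nontrivial restrictions unless $\delta_{i2}=0$. The inner sum then reduces to
\[
    \natsum_{\substack{(D_{12}D_{22}D_{32})^2 \le x \\ D_{i2}\equiv w_{i2}\,(8) \\ \Psi \text{ holds}}}
    2^{-\omega(D_{12}D_{22}D_{32})},
\]
which I would evaluate by the Landau--Selberg--Delange method. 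The associated Dirichlet series factors (up to a regular factor) as $\zeta(s)^{\alpha}$ with $\alpha=3/2$ when every $\delta_{i2}=0$ (each odd squarefree $n$ admits $3^{\omega(n)}$ ordered coprime factorizations into three parts, balanced against the weight $2^{-\omega(n)}$), and $\alpha \le 5/4$ otherwise, because each $\delta_{i2}=1$ restricts the corresponding part to primes $\equiv 1\,(4)$ and lowers the effective exponent at the branch point. Only $\alpha=3/2$ produces the advertised main term of order $\sqrt{x}(\log\sqrt{x})^{1/2}$; every restricted case yields at most $O(\sqrt{x}(\log x)^{1/4})$ and is absorbed in the error. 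Case~(B) contributes symmetrically, producing the second summand of $Y(\vec{w};\Psi)$. The constant $\kappa_0$ packages the resulting Euler products, and the factor $4^{-3}$ accounts for the density of each prescribed residue mod~$8$ among odd integers.

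Next, I would bound the contribution from all configurations in which at least one variable from each column exceeds $1$. In such configurations, at least one Jacobi symbol $\bigl(\frac{D_{j2}D_{k2}}{D_{i1}}\bigr)$, viewed as a Dirichlet character in a designated ``long'' variable, is non-principal (for example, if $D_{22}$ and $D_{32}$ are not both $1$, then $a\mapsto (D_{22}D_{32}/a)$ is non-principal, because $D_{22}D_{32}$ is a non-square squarefree integer). After a dyadic decomposition of the six ranges, I would apply Heath-Brown's bilinear estimate for the Jacobi symbol, cf.~\cite{heath}, to the two longest variables---one from each column---obtaining essentially square-root cancellation in the long ranges. Summing these bounds over the remaining short variables, and using the Siegel--Walfisz theorem to handle the mod-$8$ conditions uniformly, yields the error term $O(\sqrt{x}(\log x)^{1/4})$.

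I expect the principal obstacle to lie in this last step: controlling a six-variable sum with six intertwined Jacobi symbols, while maintaining uniformity in $\vec{w}$ and the $\Psi$-restrictions. The dyadic decomposition must, in every case, locate a pair of variables to which the bilinear estimate can be applied with sufficient cancellation, and one must verify that the ``medium-sized'' configurations lose a power of $\log x$ uniformly across all boxes. This is precisely where the adaptation of Heath-Brown's framework from \cite{heath} becomes delicate, and where the analogy with the treatment of the Selmer group character sums in that paper is most useful.
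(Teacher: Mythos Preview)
Your overall plan matches the paper's: isolate the two ``diagonal'' configurations (one column of variables equal to $1$) and evaluate them by Landau--Selberg--Delange, then bound everything else using Heath-Brown's machinery. Your treatment of the main term is correct, including the reduction of the exponent when some $\delta_{i2}=1$.

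However, there is a real gap in your error analysis. You propose to handle every non-diagonal configuration by applying the bilinear Jacobi estimate to ``the two longest variables---one from each column.'' This does not work when one column has only small variables. For a concrete bad box, take $D_{11},D_{21},D_{31}$ each of size roughly $x^{1/6}$, $D_{12}=3$, and $D_{22}=D_{32}=1$. The only nontrivial characters in $g(\vec{D})$ are $(3/D_{21})$ and $(3/D_{31})$; applying the bilinear estimate to the pair $(D_{21},D_{12})$ saves only $\min(A_{21},A_{12})^{-1/32}=3^{-1/32}$, which is useless. Configurations like this are not exotic: they account for a volume of order $\sqrt{x}(\log x)^{1/2}$ before cancellation, so they must be dealt with.

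The paper closes this gap with a three-tier argument that your outline misses. After dyadic decomposition: (i)~if two \emph{linked} variables are both $\ge(\log x)^{192}$, apply the bilinear estimate; (ii)~if some variable $D_{ij}$ exceeds a larger threshold $z=\exp\bigl((\log x)^{\theta}\bigr)$ while the product of the variables linked to it is $>1$ but $\ll(\log x)^{O(1)}$, sum $D_{ij}$ against the resulting nonprincipal character of small modulus and invoke Siegel--Walfisz---this, and not ``handling the mod-$8$ conditions uniformly,'' is the essential role of Siegel--Walfisz in the proof; (iii)~if at most two variables exceed $z$, bound trivially by counting, which costs $O\bigl(\sqrt{x}(\log z)^{3}\bigr)=O\bigl(\sqrt{x}(\log x)^{3\theta}\bigr)$. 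Taking $\theta=1/12$ makes tier~(iii) land at $O(\sqrt{x}(\log x)^{1/4})$. After tiers (i)--(iii) the only surviving boxes have three large variables, all in the same column, and the linked column identically $1$; this is how the diagonal configurations emerge, rather than being singled out at the start. Your proposal conflates tiers~(i) and~(ii) and omits~(iii) entirely, so as written the argument does not close.
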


We prove this proposition using the methods of Heath-Brown in \cite{heath}. This is an analysis involving ``linked'' variables $D_{ij},D_{k\ell}$, which are ones where the quadratic character $(D_{ij}/D_{k\ell})$ appears without its reciprocal in the summand under consideration, so that one expects its oscillations to manifest cancellations.
The precise statements of the intermediate lemmas involve a fair
        amount of notation; to help orient the reader, here is an outline
        of the argument.
First,
      after breaking the summation into dyadic intervals,
we
consider terms where there are two linked variables, each of which
must
be
(relatively) large (see~(\ref{whenever})).
These terms are shown to contribute negligibly to
$
G_{\vec{w}}(x,\Psi)
$
by an estimate (Lemma~\ref{lem:bilinear}) for bilinear forms involving the Kronecker symbol; this is the content of
Lemma~\ref{lem:linked}.
Next we consider the case of when there is one large variable, and all of the variables to which it is linked are (relatively) small.
These terms are also shown to contribute negligibly, this time by the Siegel-Walfisz theorem (Lemma~\ref{lem:charsum}); this is the content of Lemma~\ref{lem:onebig}.
Lastly, we show that the contribution of terms where many (i.e. four or more) variables are small is also negligible; this is the content of Lemma~\ref{lem:linked-var-summary}.
         It is worth mentioning that these lemmas are cumulative, i.e. each lemma builds on the previous ones and subsumes them.
After proving Lemma~\ref{lem:linked-var-summary}, we
         use it to finish the proof of Proposition~\ref{prop:gwx}---this also involves using Lemma~\ref{lem:triple-char}, which is itself an application of the Landau-Selberg-Delange method.
To keep the flow of the exposition here, we defer the proofs of
                  some of these technical lemmas (Lemmas~\ref{lem:bilinear}, \ref{lem:charsum}, and \ref{lem:triple-char}) to Section \ref{sec:lemma}.

Now we begin the proof of
Proposition~\ref{prop:gwx}.
Throughout the following $\vec{w}\in H^6$ and all $\delta_{ij}$ are fixed. First, we divide the range of each variable $D_{ij}$ into dyadic intervals $(A_{ij},2A_{ij}]$, where the $A_{ij}$ run through powers of 2 less than $\sqrt{x}$. This gives us $O(\log^6 x)$ nonempty subsums
\begin{equation}
\label{dyadic}
S(\vec{A})\;:=\;\natsum_{\substack{\Delta(\vec{D})\le x \\ \vec{D}\;\equiv\;\vec{w}\;(8) \\ A_{ij}<D_{ij}\le 2A_{ij} }}2^{-\omega(\vec{D})}\Psi(\vec{D})g(\vec{D}),\qquad \text{with}\qquad G_{\vec{w}}(x;\Psi)\;=\;\sum_{\vec{A}}S(\vec{A}),
\end{equation}
where we are writing $\vec{A}:=(A_{11},A_{12},A_{21},A_{22},A_{31},A_{32})$. In each subsum $S(\vec{A})$ we have $1\ll\prod_{ij}A_{ij}\ll \sqrt{x}$. Now recall that
\[
g(\vec{D})\;=\;\Big(\frac{D_{22}}{D_{11}}\Big)\Big(\frac{D_{32}}{D_{11}}\Big)\Big(\frac{D_{12}}{D_{21}}\Big)\Big(\frac{D_{32}}{D_{21}}\Big)\Big(\frac{D_{12}}{D_{31}}\Big)\Big(\frac{D_{22}}{D_{31}}\Big).
\]
We call the variables $D_{ij},D_{k\ell}$ ``linked'' if one of $(D_{ij}/D_{k\ell})$ or $(D_{k\ell}/D_{ij})$ appears as a factor of $g(\vec{D})$. Say that the latter appears in $g(\vec{D})$; we write $2^{-\omega(\vec{D})}\Psi(\vec{D})g(\vec{D})$ in the form
\[
2^{-\omega(\vec{D})}\Psi(\vec{D})g(\vec{D})\;=\;\Big(\frac{D_{k\ell}}{D_{ij}}\Big)a(D_{ij})b(D_{k\ell}),
\]
where the function $a(D_{ij})$ depends on all other variables $D_{uv}$ (including $D_{ij}$), but it is independent of $D_{k\ell}$, and similarly for the function $b(D_{k\ell})$. Note that $|a(D_{ij})|,|b(D_{k\ell})|\le1$. We can now write
\[
|S(\vec{A})|\;\le\;\sum_{1\le D_{uv}\le A_{uv}}\Big|\mathop{\sum_{D_{ij}}\sum_{D_{k\ell}}}_{D_{ij}D_{k\ell}\le \sqrt{x}}\Big(\frac{D_{k\ell}}{D_{ij}}\Big)a(D_{ij})b(D_{k\ell})\Big|,
\] 
and
we
apply
Lemma \ref{lem:bilinear} to the double summation over $D_{ij}$ and $D_{k\ell}$, which gives
\[
S(\vec{A})\;\ll\;\Big(\prod_{uv}A_{uv}\Big)A_{ij}A_{k\ell}\{\min(A_{ij},A_{k\ell})\}^{-1/32}\;\ll\;\sqrt{x}\;\{\min(A_{ij},A_{k\ell})\}^{-1/32}.
\]
Combining this estimate with the fact (stated just before \eqref{dyadic}) that there are $O(\log^6 x)$ subsums $S(\vec{A})$, we deduce the following.

\begin{lem}
\label{lem:linked}
We have
\[
S(\vec{A})\;\ll\;\sqrt{x}(\log x)^{-6}
\]
whenever there is a pair of linked variables with
\begin{equation}
A_{ij},A_{k\ell}\;\ge\;(\log x)^{192}.        \label{whenever}
\end{equation}
Therefore the total contribution of such subsums $S(\vec{A})$ to $G_\vec{w}(x;\Psi)$ is $\ll\sqrt{x}$.
\end{lem}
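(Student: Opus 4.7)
The lemma is essentially a bookkeeping consequence of the bilinear estimate already derived just above its statement, so the plan is to extract it cleanly and then sum over dyadic intervals.

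The plan is to start from the bound
\[
S(\vec A) \;\ll\; \sqrt{x}\,\bigl\{\min(A_{ij},A_{k\ell})\bigr\}^{-1/32}
\]
which is already produced in the paragraph preceding the lemma (by factoring the summand around the linked pair, extracting the Jacobi symbol $(D_{k\ell}/D_{ij})$, and invoking Lemma~\ref{lem:bilinear} on the inner double sum over $D_{ij},D_{k\ell}$ while summing trivially over the remaining four variables $D_{uv}$ in boxes of side $A_{uv}$, using $\prod_{uv} A_{uv} \ll \sqrt{x}$). Under the hypothesis $A_{ij},A_{k\ell} \ge (\log x)^{192}$, we have
\[
\bigl\{\min(A_{ij},A_{k\ell})\bigr\}^{-1/32} \;\le\; (\log x)^{-192/32} \;=\; (\log x)^{-6},
\]
which immediately yields $S(\vec A)\ll \sqrt{x}(\log x)^{-6}$, as claimed.

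For the second assertion, I would recall from the paragraph around \eqref{dyadic} that the dyadic decomposition produces only $O((\log x)^6)$ nonempty tuples $\vec A$, since each $A_{ij}$ ranges over powers of $2$ that are at most $\sqrt{x}$. Summing the per-box bound over all tuples $\vec A$ satisfying (\ref{whenever}) for at least one linked pair therefore contributes
\[
\sum_{\vec A} S(\vec A) \;\ll\; (\log x)^{6}\cdot \sqrt{x}\,(\log x)^{-6} \;=\; \sqrt{x},
\]
which is the stated contribution to $G_{\vec w}(x;\Psi)$.

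There is essentially no technical obstacle here, since the analytic heavy lifting is in Lemma~\ref{lem:bilinear} (deferred to Section~\ref{sec:lemma}); the only point requiring care is making sure one applies the bilinear estimate along an \emph{actual} linked pair appearing in $g(\vec D)$, and correctly treating the other four ``bystander'' variables as bounded coefficients absorbed into the $a(D_{ij})$ and $b(D_{k\ell})$ side of the factorization. The exponent $192 = 6\cdot 32$ in the hypothesis is precisely calibrated so that the $1/32$ saving from Lemma~\ref{lem:bilinear} exactly cancels the $(\log x)^6$ loss from the dyadic decomposition; this is the reason for the specific numerical threshold in (\ref{whenever}).
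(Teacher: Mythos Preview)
Your proposal is correct and follows essentially the same approach as the paper: you use the previously derived bound $S(\vec A)\ll\sqrt{x}\{\min(A_{ij},A_{k\ell})\}^{-1/32}$, plug in the threshold $(\log x)^{192}$ to get the $(\log x)^{-6}$ saving, and then absorb the $O((\log x)^6)$ dyadic boxes. This is exactly what the paper does.
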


Now we consider those subsums
$
S(\vec{A})
$
where some
$
A_{ij}\ge(\log x)^{192}
$,
but
every variable $D_{uv}$ to which $D_{ij}$ is linked has
$
A_{uv}<(\log x)^{192}
$.
Letting $D$ be the product of these $D_{uv}$, we apply quadratic reciprocity as necessary to write $2^{-\omega(\vec{D})}\Psi(\vec{D})g(\vec{D})$ in the form
\[
2^{-\omega(\vec{D})}\Psi(\vec{D})g(\vec{D})\;=\;2^{-\omega(D_{ij})}\Big(\frac{D_{ij}}{D}\Big)\chi(D_{ij})\psi(D_{ij})^{\delta_{ij}} R,
\]
where $\chi$ is a Dirichlet character modulo 4, which may depend on the variables $D_{uv}$ other than $D_{ij}$, and the remaining factor $R$ is independent of $D_{ij}$ and satisfies $|R|\le1$. Recall that $\delta_{ij}=1$ or 0 depending on whether or not the term $\psi(D_{ij})$ appears in the product $\Psi(\vec{D})$.
Thus we have a bound for $S(\vec{A})$ of the form
\[
|S(\vec{A})|\;\le\;\sum_{D_{uv}}\Big|\sum_{D_{ij}}2^{-\omega(D_{ij})}\Big(\frac{D_{ij}}{D}\Big)\chi(D_{ij})\psi(D_{ij})^{\delta_{ij}}\Big|,
\]
where the inner summation runs over $D_{ij}$ that are odd, positive, squarefree, and coprime to all the other variables $D_{uv}$, and such that
\[
D_{ij}\equiv\mymod{w_{ij}}{8},\qquad A_{ij}<D_{ij}\le \min(2A_{ij},x'),
\]
where $x'$ depends on the variables $D_{uv}$ other than $D_{ij}$. We remove the condition $D_{ij}\equiv w_{ij}\;(8)$ using characters modulo
$8$,
and then we would like to apply Lemma \ref{lem:charsum} to bound the inner sum. The condition
  $
  A_{ij}\ge(\log x)^{192}
  $
  is not quite strong enough for Lemma \ref{lem:charsum} to be applied, however, and so we now instead require that
\[
    A_{ij}\;\ge\;z\;:=\;\exp\{(\log x)^\theta\},
\]
where
$
0<\theta<1
$
is a parameter to be
determined
   later.\footnote{Note that the parameter corresponding to $z$ in
   Heath-Brown's work \cite{heath} is
        ``$C=\exp\{\kappa(\log\log X)^2\}$.''
   Fouvry and Kl\"{u}ners \cite{fouvry-kluners} (who use the techniques of
   Heath-Brown to study the 4-rank of class groups of quadratic fields) have
   pointed out (see \cite[p.~460]{fouvry-kluners}) that this choice of
   parameter leads to an erroneous
   application of Heath-Brown's Lemma 6 (the analogue of our Lemma
   \ref{lem:charsum}) in \cite{heath}. This explains why we choose the
   parameter $z$ differently than in \cite{heath}. Fouvry--Kl\"{u}ners adapt
   their argument accordingly; our adaption is closer to the one in the work of
   Xiong--Zaharescu \cite{xiong}, who also use Heath-Brown's
   techniques.}
Additionally
   we define the parameter $N>0$ and take it large enough so that $N\theta>5\cdot192=960$. Now we apply Lemma \ref{lem:charsum}, taking (in the notation of that lemma) $r=\prod_{uv\ne ij}D_{uv}$, and taking
\begin{equation}
\label{sw-modulus-condition}
q\;=\;8D\;\ll\;(\log^{192}x)^5\;\ll\;(\log z)^{N},
\end{equation}
and we deduce that
\[
S(\vec{A})\;\ll\;A_{ij}\exp(-c\sqrt{\log A_{ij}})\sum_{r=\prod D_{uv}}\tau(r),
\]
provided that $D>1$. Here $\tau(r)$ denotes the usual divisor function, and $c=c_N$ is an absolute constant that depends on the parameter $N$.
Since the variables $D_{uv}$ are coprime in pairs, we have $\tau(r)=\prod_{uv}\tau(D_{uv})$. For a single variable $D_{uv}$ we have
\[
\sum_{D_{uv}}\tau(D_{uv})\;\ll\;A_{uv}\log A_{uv}\;\ll\;A_{uv}\log x,
\]
and hence we deduce
\[
S(\vec{A})\;\ll\;\sqrt{x}(\log x)^5\exp(-c\sqrt{\log z}).
\]
For $x$ sufficiently large, we have
\[
  \exp(-c\sqrt{\log z})\;=\;\exp(-c(\log x)^{\theta/2})\;\ll\;(\log x)^{-11}.
\]
With the following, we summarize the above result together with the result of Lemma \ref{lem:linked}.

\begin{lem}
\label{lem:onebig}
We have
\[
S(\vec{A})\;\ll\;\sqrt{x}(\log x)^{-6}
\]
whenever there are linked variables $D_{ij},D_{k\ell}$ for which
\[
A_{ij}\;\ge\;z\;=\;\exp\{(\log x)^\theta\}
\]
and
$
D_{k\ell}>1
$.
Therefore the total contribution of such subsums $S(\vec{A})$ to $G_\vec{w}(x;\Psi)$ is $\ll\sqrt{x}$.
\end{lem}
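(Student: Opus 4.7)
The plan is to leverage Siegel--Walfisz-type cancellation in a long character sum over the single big variable $D_{ij}$, sidestepping the bilinear estimate that was used in Lemma~\ref{lem:linked} (which required \emph{two} large linked variables). First I would collect the $D_{ij}$-dependence of the summand by rewriting
\[
  2^{-\omega(\vec{D})}\Psi(\vec{D})g(\vec{D})
  \;=\;
  2^{-\omega(D_{ij})}\Big(\frac{D_{ij}}{D}\Big)\chi(D_{ij})\psi(D_{ij})^{\delta_{ij}}\,R,
\]
where $D$ is the product of the variables $D_{uv}$ linked to $D_{ij}$, the factor $\chi$ is a Dirichlet character modulo $4$ coming from the reciprocity flips needed to bring every $(D_{ij}/D_{uv})$ or $(D_{uv}/D_{ij})$ into the common orientation $(D_{ij}/D)$, and $R$ is independent of $D_{ij}$ with $|R|\le 1$. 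Then the residue constraint $D_{ij}\equiv w_{ij}\pmod{8}$ would be detached from the summation by expanding it in Dirichlet characters modulo $8$, absorbing them into $\chi$ at the cost of replacing the modulus by $8D$.

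Next I would apply Lemma~\ref{lem:charsum} to the resulting sum over $D_{ij}\in(A_{ij},\min(2A_{ij},x')]$ coprime to the other variables. Since $D_{k\ell}>1$ for some linked variable, the combined character is nontrivial, and the modulus $q=8D$ satisfies $q\ll (\log x)^{5\cdot 192}\ll(\log z)^N$ for any $N$ with $N\theta>960$, so the Siegel--Walfisz hypothesis \eqref{sw-modulus-condition} holds. This produces the bound
\[
   \sum_{D_{ij}}2^{-\omega(D_{ij})}\Big(\frac{D_{ij}}{D}\Big)\chi(D_{ij})\psi(D_{ij})^{\delta_{ij}}
   \;\ll\; A_{ij}\exp(-c\sqrt{\log A_{ij}})
\]
with $c=c_N$, and since $A_{ij}\ge z$ we have $\sqrt{\log A_{ij}}\ge (\log x)^{\theta/2}$.

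Finally I would sum trivially over the remaining (short) variables. Because the $D_{uv}$ are pairwise coprime, $\tau(\prod_{uv\ne ij}D_{uv})=\prod_{uv\ne ij}\tau(D_{uv})$, and each dyadic sum $\sum_{D_{uv}}\tau(D_{uv})\ll A_{uv}\log x$ contributes one extra factor of $\log x$. With at most five such short variables and $\prod_{uv}A_{uv}\ll\sqrt{x}$, I arrive at
\[
  S(\vec{A})\;\ll\;\sqrt{x}(\log x)^5\exp\bigl(-c(\log x)^{\theta/2}\bigr)\;\ll\;\sqrt{x}(\log x)^{-6}
\]
for $x$ sufficiently large, yielding the first assertion. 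The total contribution to $G_{\vec{w}}(x;\Psi)$ then follows by summing over the $O(\log^6 x)$ admissible dyadic tuples $\vec{A}$, giving $O(\sqrt{x})$.

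The delicate point is the calibration of parameters rather than any single analytic step: $z$ must be chosen \emph{small} enough that the combined modulus $q=8D$, built from variables of size up to $(\log x)^{192}$, still satisfies $q\ll(\log z)^N$ as demanded by Lemma~\ref{lem:charsum}, while simultaneously being \emph{large} enough that $\exp(-c\sqrt{\log z})$ absorbs the $(\log x)^{5}$ loss from the trivial short-variable sums and still beats $(\log x)^{-6}$. This dual requirement forces $\theta<1$ and $N\theta>960$, and once such a choice is fixed the rest of the argument is straightforward bookkeeping.
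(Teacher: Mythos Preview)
Your approach matches the paper's almost step for step, but there is one genuine gap: the assertion that the modulus $q=8D$ satisfies $q\ll(\log x)^{5\cdot 192}$ is unjustified as written. Here $D$ is the product of the variables linked to $D_{ij}$, and nothing in the hypothesis of the lemma bounds their size---the statement only says $A_{ij}\ge z$ and that \emph{some} linked $D_{k\ell}>1$. A linked variable could in principle have $A_{uv}$ as large as $\sqrt{x}$, in which case $q$ is far too big for Lemma~\ref{lem:charsum} to apply. Your phrase ``sidestepping the bilinear estimate'' suggests you think Lemma~\ref{lem:linked} is not needed here, but in fact it is essential: you must first split off the case where some linked $A_{uv}\ge(\log x)^{192}$, observe that then both $A_{ij}\ge z\ge(\log x)^{192}$ and $A_{uv}\ge(\log x)^{192}$, and invoke Lemma~\ref{lem:linked} directly. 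Only in the complementary case---all linked variables with $A_{uv}<(\log x)^{192}$---does your bound on $D$ hold and the Siegel--Walfisz argument go through. The paper makes this dichotomy explicit, stating that Lemma~\ref{lem:onebig} ``summarize[s] the above result together with the result of Lemma~\ref{lem:linked}.''

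Once this case split is inserted, the rest of your argument (the reciprocity rearrangement, expansion of the mod-$8$ congruence in characters, application of Lemma~\ref{lem:charsum} with $r=\prod_{uv\ne ij}D_{uv}$, the divisor-sum bound over the short variables, and the final parameter calibration) is correct and coincides with the paper's proof.
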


Next we estimate subsums $S(\vec{A})$ where at most two of the $D_{ij}$ have $A_{ij}\ge z$. Note that by taking $\theta$ slightly larger, we may assume that $z$ is a power of 2. If $\sum'$ denotes summation with the condition that at most two of the $A_{ij}\ge z$, then we have
\begin{equation}
\label{eq:few-large}
\primesum_{A_{ij}}|S(\vec{A})|\;\le\;\sum_{n_1\cdots n_6\le\sqrt{x}}2^{-\omega(n_1)}\cdots2^{-\omega(n_6)},
\end{equation}
where the $n_i$ are squarefree and pairwise coprime, and at most 2 of them are $\ge 2z$. Put
\[
m=\prod_{n_i<2z}n_i\qquad\text{and}\qquad n=\prod_{n_i\ge2z}n_i,
\]
so that $m<(2z)^6$ and $n\le \sqrt{x}/m$. Moreover, each value of $m$ can arise at most $6^{\omega(m)}$ times in \eqref{eq:few-large}, and each value of $n$ at most $\binom{6}{2}2^{\omega(n)}$ times. Therefore we have
\[
\primesum_{A_{ij}}|S(\vec{A})|\;\ll\;\sum_m\frac{6^{\omega(m)}}{2^{\omega(m)}}\sum_n\frac{2^{\omega(n)}}{2^{\omega(n)}}\;\ll\;\sqrt{x}\sum_{m\le(2z)^6}\frac{3^{\omega(m)}}{m}.
\]
For any fixed $\gamma>0$, we have the bound
\[
\sum_{m\le M}\gamma^{\omega(m)}\;\ll\;M(\log M)^{\gamma-1},
\]
which together with partial summation gives
\[
\sum_{m\le M}\frac{3^{\omega(m)}}{m}\;\ll\;(\log M)^3,
\]
and thus we have
\[
\primesum_{A_{ij}}|S(\vec{A})|\;\ll\;\sqrt{x}(\log x)^{3\theta}.
\]
Now choose the value $\theta=1/12$ (which also determines $N=12\cdot 960=11520$).
Combining the above estimate with the result of Lemma \ref{lem:onebig}, we may summarize as follows.

\begin{lem}
\label{lem:linked-var-summary}
We have
\[
\sum_{\vec{A}}|S(\vec{A})|\;\ll\;\sqrt{x}(\log x)^{1/4},
\]
where the sum is taken over all tuples $\vec{A}$ where there are at most two entries $A_{ij}\ge z$, or there are linked variables $D_{ij},D_{k\ell}$ with $A_{ij}\ge z$ and $D_{k\ell}>1$.
\end{lem}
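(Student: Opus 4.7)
The plan is to present this lemma as a direct consolidation of the two estimates that have been developed in the paragraphs immediately preceding the statement. The set of tuples $\vec{A}$ singled out by the hypothesis is a union of two (possibly overlapping) subfamilies, one per clause, and I would bound each separately and then add.

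For the subfamily consisting of tuples $\vec{A}$ with at most two entries $A_{ij}\ge z$, I would reproduce the elementary divisor argument already sketched above: trivially bound $|S(\vec{A})|$ by $\sum 2^{-\omega(n_1)}\cdots 2^{-\omega(n_6)}$ with the $n_k$ squarefree, pairwise coprime, and at most two of them $\ge 2z$; split $n_1\cdots n_6=m\cdot n$ with $m<(2z)^6$ and $n\le \sqrt{x}/m$; then invoke the standard estimate $\sum_{m\le M}3^{\omega(m)}/m\ll(\log M)^3$ to obtain $\sqrt{x}(\log z)^3 \ll \sqrt{x}(\log x)^{3\theta}$. With the choice $\theta=1/12$ fixed in the preceding discussion, this gives $\sqrt{x}(\log x)^{1/4}$ as required.

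For the remaining subfamily---tuples admitting a linked pair $D_{ij},D_{k\ell}$ with $A_{ij}\ge z$ and $D_{k\ell}>1$---I would apply Lemma \ref{lem:onebig} directly, yielding $|S(\vec{A})|\ll\sqrt{x}(\log x)^{-6}$ pointwise. Since there are only $O((\log x)^6)$ nonempty dyadic tuples $\vec{A}$, the total contribution of this subfamily is $\ll\sqrt{x}$, which is comfortably absorbed into $\sqrt{x}(\log x)^{1/4}$.

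I do not expect any genuine obstacle here: all the analytic content has already been packaged into Lemmas \ref{lem:bilinear}, \ref{lem:charsum}, \ref{lem:linked}, and \ref{lem:onebig}, so the present step is essentially combinatorial bookkeeping. The only small point to verify is that the position-counting factors $\binom{6}{2}$ (for the at-most-two large variables) and $6^{\omega(m)}$ (for the small ones) are successfully cancelled by the $2^{-\omega}$ weights in the summand, as in the computation above, so that the final exponent on $\log x$ is indeed $3\theta=1/4$ rather than something larger.
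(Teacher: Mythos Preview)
Your proposal is correct and matches the paper's approach exactly: the lemma is simply a consolidation of the two estimates developed in the preceding paragraphs (the elementary divisor count for tuples with at most two large entries, yielding $\sqrt{x}(\log x)^{3\theta}$ with $\theta=1/12$, together with the pointwise bound from Lemma~\ref{lem:onebig} summed over $O((\log x)^6)$ dyadic boxes). The paper likewise presents Lemma~\ref{lem:linked-var-summary} as a summary rather than giving a separate proof.
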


Now with this lemma, we finish the proof of Proposition \ref{prop:gwx}.

\begin{proof}[Proof of Proposition \ref{prop:gwx}]
For a subsum $S(\vec{A})$ not to be eliminated by Lemma \ref{lem:linked-var-summary} (i.e., for a subsum to potentially contribute non-negligibly to $G_{\vec{w}}(x;\Psi)$), there must be at least three $A_{ij}\ge z$. These three entries must be either $A_{11},A_{21},A_{31}$ or $A_{12},A_{22},A_{32}$; if not (i.e., if it is some other subset of three entries), the shape of $g(\vec{D})$ is such that two of the corresponding variables $D_{ij},D_{k\ell}$ must be linked, and then this subsum would have been eliminated by Lemma \ref{lem:linked-var-summary}.
Moreover, if $A_{11},A_{21},A_{31}\ge z$, then we must have $D_{12}=D_{22}=D_{32}=1$ (and vice versa when $A_{12},A_{22},A_{32}\ge z$), since otherwise Lemma \ref{lem:linked-var-summary} would again have eliminated such a subsum. For each of these remaining terms we have $g(\vec{D})=1$. On recombining the dyadic subdivision of our original summation, we have now shown that
\[
G_{\vec{w}}(x;\Psi)\;=\;G_{\vec{w}}^1(x;\Psi)\;+\;G_{\vec{w}}^2(x;\Psi)\;+\; O(\sqrt{x}(\log x)^{1/4}),
\]
where we have put
\[
G_{\vec{w}}^1(x;\Psi)\;:=\natsum_{\substack{\Delta(\vec{D})\le x \\ \vec{D}\;\equiv\;\vec{w}\;(8) \\ D_{11}=D_{21}=D_{31}=1  }}2^{-\omega(\vec{D})}\Psi(\vec{D})\qquad \text{and}\qquad G_{\vec{w}}^2(x;\Psi)\;:=\natsum_{\substack{\Delta(\vec{D})\le x \\ \vec{D}\;\equiv\;\vec{w}\;(8) \\ D_{12}=D_{22}=D_{32}=1  }}2^{-\omega(\vec{D})}\Psi(\vec{D}).
\]
We evaluate $G_{\vec{w}}^1(x;\Psi)$ now; the same method applies analogously to $G_{\vec{w}}^2(x;\Psi)$. We see that $G_{\vec{w}}^1(x;\Psi)=0$ unless one has $w_{11}=w_{21}=w_{31}=1$. We use three sets of characters $\chi_1,\chi_2,\chi_3$ modulo 8 to detect the other three congruence conditions in $\vec{D}\equiv\vec{w}\;(8)$, which gives us
\begin{align*}
G_{\vec{w}}^1(x;\Psi)\;&=\; \prod_{1\le i\le3}\ind_{w_{i1}=1}\cdot\frac{1}{4^3}\sum_{\chi_1}\sum_{\chi_2}\sum_{\chi_3}\overline{\chi_1}(w_{12})\overline{\chi_2}(w_{22})\overline{\chi_3}(w_{32}) \\
&\phantom{blah}\times \natsum_{(D_{12}D_{22}D_{32})^2\le x}2^{-\omega(D_{12}D_{22}D_{32})}\chi_1(D_{12})\chi_2(D_{22})\chi_3(D_{32})\psi(D_{12})^{\delta_{12}}\psi(D_{22})^{\delta_{22}}\psi(D_{32})^{\delta_{32}},
\end{align*}
where $\ind_{w_{i1}=1}=1$ if and only if $w_{i1}=1$, else it is $=0$. We now apply Lemma \ref{lem:triple-char} to evaluate the inner sum, and we get
\[
  G_{\vec{w}}^1(x;\Psi)
  \;=\;
  \frac{\kappa_0}
       {4^3}
  \sqrt{x}(\log \sqrt{x})^{J}
  \prod_{1\le i\le3}
     \ind_{w_{i1}=1}
     \cdot
     (1+\delta_{i2}\chi_{-1}(w_{i2}))
  \;+\;
     O(\sqrt{x}(\log x)^{J-1}),
\]
where $\kappa_0$ is the constant defined in \eqref{kappa-zero}, and
$
J=(2-\delta_{12}-\delta_{22}-\delta_{32})/4
$.
Observe that if at least one $\delta_{i2}=1$, then
$
J\le 1/4
$,
and hence we have
\[
G_{\vec{w}}^1(x;\Psi)\;=\;\frac{\kappa_0}{4^3}\sqrt{x}(\log \sqrt{x})^{1/2}\ind_{w_{11}=w_{21}=w_{31}=1}\ind_{\delta_{12}=\delta_{22}=\delta_{32}=0}\;+\;O(\sqrt{x}(\log x)^{1/4}).
\]
Evaluating $G_{\vec{w}}^2(x;\Psi)$ the same way and then combining the two completes the proof.
\end{proof}


\section{Asymptotic evaluation of $B_h^\sigma(X;G)$}
\label{sec:asymptotic_evaluation_of_b_h_sigma_x_g_}

In this section we use the results of Sections \ref{sec:average} and \ref{sec:linkvars} to prove the following.

\begin{thm}
\label{thm:main-thm}
Let $\kappa_0$ be the constant defined in {\rm (\ref{kappa-zero})}.
For both $G=Q_8$ and $D_4$, we have
\[
  B^\sigma(X;G)\;=\;K^\sigma(G)\sqrt{X}(\log X)^{1/2}\;+\;O(\sqrt{X}(\log X)^{1/4}),
\]
where the constants $K^\sigma(G)$ are as follow:
\begin{itemize}
  \item for $G=Q_8$, $K^\sigma(Q_8)=0$ unless $\sigma=(+1,+1)$, and
  \[
    K^{(+1,+1)}(Q_8)\;=\;\frac{25 \kappa_0}{192\sqrt{2}}
  \]
  \item for $G=D_4$ we have
\[
    K^{(+1,+1)}(D_4)=\frac{33\kappa_0}{64\sqrt{2}},
    \quad
    K^{(+1,-1)}(D_4)
    =
    K^{(-1,+1)}(D_4)=\frac{31\kappa_0}{96\sqrt{2}},
    \quad
    K^{(-1,-1)}(D_4)=\frac{37\kappa_0}{96\sqrt{2}}.
\]
\end{itemize}
\end{thm}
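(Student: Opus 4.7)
The plan is to combine the decomposition of $B_h^\sigma(X;G)$ furnished by Propositions~\ref{prop:fhq8} and~\ref{prop:fhd4} with the asymptotic evaluation of the character sum $G_{\vec{w}}(x;\Psi)$ in Proposition~\ref{prop:gwx}, and then to assemble the four values of $h$ via~\eqref{sum-over-h}. Since $\log\sqrt{X/T_h} = \tfrac12\log X + O(1)$, applying Proposition~\ref{prop:gwx} at $x = X/T_h$ gives
\[
  G_{\vec{w}}(X/T_h;\Psi) \;=\; \frac{\kappa_0}{4^3\sqrt{2T_h}}\,\sqrt{X}(\log X)^{1/2}\,Y(\vec{w};\Psi) \;+\; O\bigl(\sqrt{X}(\log X)^{1/4}\bigr),
\]
and summing over the finite set $\vec{w}\in H^6$ preserves the shape of the error. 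Hence, for $Q_8$ (and $\sigma=(+1,+1)$, the only case in which $B_h^\sigma(X;Q_8)\ne 0$), Proposition~\ref{prop:fhq8} immediately yields
\[
  B_h^{(+1,+1)}(X;Q_8) \;=\; \frac{\kappa_0\,\sqrt{X}(\log X)^{1/2}}{m_h\cdot 4^3\sqrt{2T_h}}\sum_{\vec{w}\in H^6} f_h(\vec{w};Q_8)\,Y(\vec{w};1) \;+\; O\bigl(\sqrt{X}(\log X)^{1/4}\bigr),
\]
where the symbol $\Psi=1$ refers to the trivial configuration in which all $\delta_{ij}=0$.

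For $D_4$ there is an additional ingredient, namely the secondary term in Proposition~\ref{prop:fhd4} involving $r_h^\sigma(\vec{D};D_4)$. Every summand of $r_h^\sigma$ factors as a pre-factor depending only on the residues $D_{ij}\bmod 8$ (built from Kronecker symbols $(-1/\cdot)$ and $(2/\cdot)$ together with constants) times a function $\psi(M)$ with $M$ equal to a product of four or six of the variables $D_{ij}$. After fixing $\vec{D}\equiv \vec{w}\pmod 8$, the resulting inner sum has the form $G_{\vec{w}}(X/T_h;\Psi)$ for an exponent tuple $(\delta_{ij})$ whose support has size four or six. In particular, neither of the vanishing conditions $\delta_{12}=\delta_{22}=\delta_{32}=0$ nor $\delta_{11}=\delta_{21}=\delta_{31}=0$ can hold, so $Y(\vec{w};\Psi)\equiv 0$ by Proposition~\ref{prop:gwx}, and every contribution of $r_h^\sigma$ is absorbed into $O(\sqrt{X}(\log X)^{1/4})$.

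The main term therefore reduces to evaluating, for each triple $(h,\sigma,G)$, the finite sum
\[
  \Sigma_h^\sigma(G) \;:=\; \sum_{\vec{w}\in H^6} f_h^\sigma(\vec{w};G)\,Y(\vec{w};1),
\]
after which
\[
  B^\sigma(X;G) \;=\; \sum_{h=1}^4 \frac{\kappa_0\,\Sigma_h^\sigma(G)}{m_h\cdot 4^3\sqrt{2T_h}}\,\sqrt{X}(\log X)^{1/2} \;+\; O\bigl(\sqrt{X}(\log X)^{1/4}\bigr).
\]
Because $Y(\vec{w};1)$ is supported on the union of the two ``coordinate hyperplanes'' $\{w_{11}=w_{21}=w_{31}=1\}$ and $\{w_{12}=w_{22}=w_{32}=1\}$, each $\Sigma_h^\sigma(G)$ splits into two summations, each over only $|H|^3 = 64$ tuples in $H^3$, which I would compute by expanding $\eta(\cdot,\cdot)$, $\Phi(\cdot,\cdot,\cdot)$, and the relevant Kronecker symbols as characters on $(\Z/8\Z)^\times$. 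Plugging the resulting numerical constants into the displayed formula, together with the data $(m_h,T_h) = (6,1),(2,64),(2,16),(2,256)$, produces the asserted values of $K^\sigma(G)$. I expect the principal technical obstacle to lie in the $D_4$ bookkeeping: each $f_h^\sigma(\vec{w};D_4)$ is a sum of three cyclic-over-$M_i$ sub-expressions with distinct $\eta$ and Kronecker-symbol patterns and their own sign-dependent indicators $\ind_{\sigma\ne(\pm1,\mp1)}$, and the precise numerical cancellations producing $33/(64\sqrt{2})$, $31/(96\sqrt{2})$, and $37/(96\sqrt{2})$ (in units of $\kappa_0$) will emerge only after carefully collecting contributions across the three sub-expressions, the sign cases, and the four values of $h$.
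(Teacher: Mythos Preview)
Your proposal is correct and follows essentially the same approach as the paper: apply Proposition~\ref{prop:gwx} to the inner sums from Propositions~\ref{prop:fhq8} and~\ref{prop:fhd4}, dispose of the $r_h^\sigma$ contributions by observing that their $\Psi$-support has at least four nonzero $\delta_{ij}$ (so $Y(\vec{w};\Psi)=0$), reduce to the finite sums $\Sigma_h^\sigma(G)=\sum_{\vec{w}}f_h^\sigma(\vec{w};G)Y(\vec{w};1)$, and then compute these numerically and sum over $h$. The paper does exactly this (writing $S_h^\sigma(G)$ for your $\Sigma_h^\sigma(G)$) and records the values of $S_h^\sigma(G)$ as obtained by computer, rather than by the hand-expansion you describe.
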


This theorem is essentially the same as Theorem \ref{thm:density}, the only difference being that in Theorem \ref{thm:density} we have grouped together the terms that contribute to the totally real and totally imaginary cases, and we have rewritten the constants. Note also our slight ``abuse'' of notation: here $\sigma$ denotes a pair of signs, whereas $\sigma\in\{\pm\}$ in the statement of Theorem \ref{thm:density} denotes whether the fields in question are totally real or totally imaginary. 

\begin{proof}[Proof of Theorem \ref{thm:main-thm}]
In Section \ref{sec:average} we showed that
\begin{align}
\label{bh-for-d4}
B_h^\sigma(X;D_4)\;&=\;\frac{1}{m_h}\sum_{\vec{w}\in H^6}f_h^\sigma(\vec{w};D_4)\natsum_{\substack{\Delta(\vec{D})\le X/T_h \\ \vec{D}\;\equiv\;\vec{w}\;(8) }}2^{-\omega(\vec{D})}g(\vec{D})\\
&\phantom{blahblah}+\natsum_{\Delta(\vec{D})\le X/T_h}r_h^\sigma(\vec{D};D_4)2^{-\omega(\vec{D})}g(\vec{D})\;+\;O(\sqrt{X}). \nonumber
\end{align}
Let $S$ be the last summation above, i.e., the one whose summand includes the function
$
r_h^\sigma(\vec{D};D_4)
$.
By splitting the summation over $\vec{D}$ into progressions
$
\vec{D}\equiv\vec{w}\;(8)
$,
we can write $S$ as a sum of several (four, in fact) sums of the form
\[
\sum_{\vec{w}\in H^6}s(\vec{w})\natsum_{\substack{\Delta(\vec{D})\le X/T_h \\ \vec{D}\;\equiv\;\vec{w}\;(8) }}2^{-\omega(\vec{D})}\Psi(\vec{D})g(\vec{D}),
\]
where each $s$ is function on $H^6$ with values in $\{+1,0,-1\}$, and $\Psi(\vec{D})=\prod_{ij}\psi(D_{ij})^{\delta_{ij}}$. The function $\Psi$ is different in each of the four summations, but in each case it has at least four $\delta_{ij}=1$. By Proposition \ref{prop:gwx}, then, each of these sums is $O(\sqrt{X}(\log X)^{1/4})$, and hence the contribution of $S$ to \eqref{bh-for-d4} is negligible. Thus we now have, for both $G=Q_8$ and $D_4$, 
\[
 B_h^\sigma(X;G)\;=\;\frac{1}{m_h}\sum_{\vec{w}\in H^6}f_h^\sigma(\vec{w};G)\natsum_{\substack{\Delta(\vec{D})\le X/T_h \\ \vec{D}\;\equiv\;\vec{w}\;(8) }}2^{-\omega(\vec{D})}g(\vec{D})\;+\;O(\sqrt{X}(\log X)^{1/4}).
\]
(Note that in the statement of Proposition \ref{prop:fhq8} we defined the functions $f_h(\vec{w};Q_8)$ without the superscript $\sigma$; here we write them with the superscript to be able to write a single expression for the $Q_8$ and $D_4$ cases, with the understanding that $f_h^\sigma(\vec{w};Q_8):=0$ whenever $\sigma\ne(+1,+1)$.) We apply the result of Proposition \ref{prop:gwx} to the $\natsum$ summation, which gives us
\[
B_h^\sigma(X;G)\;=\;K_h^\sigma(G)\sqrt{X}(\log X)^{1/2}\;+\;O(\sqrt{X}(\log X)^{1/4}),
\]
where
\begin{equation}
\label{c-h-sigma-g}
K_h^\sigma(G)\;:=\;\frac{\kappa_0S_h^\sigma(G)}{4^3m_h\sqrt{2T_h}},
\end{equation}
and where we have put
\[
  S_h^\sigma(G)\;:=\sum_{\vec{w}\in H^6}f_h^\sigma(\vec{w};G)Y(\vec{w}),
\]
with
\[
Y(\vec{w})\;:=\;\ind_{w_{11}=w_{21}=w_{31}=1}+\ind_{w_{12}=w_{22}=w_{32}=1}.
\]
It remains to evaluate the sums $S_h^\sigma(G)$, which is simply done with a computer, for instance. We record the results of those computations here. For $G=Q_8$, we have $f_h^\sigma(\vec{w},Q_8)=0$ unless $\sigma=(+1,+1)$, and in this case we compute the following values:
\begin{center}
  \begin{tabular}{c | c c c c}
    $
    \sigma
    $
    &
    $S_1^\sigma(Q_8)$ & $S_2^\sigma(Q_8)$ & $S_3^\sigma(Q_8)$ & $S_4^\sigma(Q_8)$
    \\
    \hline
    $(+1,+1)$
    &
    32 & 32 & 0 & 32
  \end{tabular}
\end{center}
For $G=D_4$, we compute the following values:
\begin{center}
  \begin{tabular}{c | c c c c}
    $
    \sigma
    $
    &
    $S_1^\sigma(D_4)$ & $S_2^\sigma(D_4)$ & $S_3^\sigma(D_4)$ & $S_4^\sigma(D_4)$ \\
    \hline
    $(+1,+1)$
    &
    96 & 96 & 64 & 96
    \\
    $
    (+1,-1)
    $
    & 64 & 64 & 32 & 64
    \\
    $
    (-1,+1)
    $
    & 64 & 64 & 32 & 64
    \\
    $
    (-1,-1)
    $
    &
    64 & 64 & 64 & 64 
  \end{tabular}
\end{center}
We use these values to compute
$
K_h^\sigma(G)
$
via \eqref{c-h-sigma-g}.  Recalling \eqref{sum-over-h}, we have that
$
K^\sigma(G)=\sum_{h=1}^4K_h^\sigma(G)
$;
summing over $1\le h\le 4$ then gives the result. 
\end{proof}

\section{Counting biquadratic fields}

As another application of our methods, we give a proof of the asymptotic count of biquadratic fields (up to isomorphism) of bounded discriminant. As we have mentioned in the introduction, this result is originally due to Baily \cite{baily} (and corrected by M\"{a}ki \cite{maki}). Nevertheless we provide the proof here as an alternative method to derive this asymptotic.

Recall that $B(X)$ denotes the number of biquadratic fields (up to isomorphism) of discriminant at most $X$, and that $B^+(X)$ and $B^-(X)$ denote the numbers of such fields that are totally real and totally imaginary, respectively. 

\begin{thm}
For $\sigma\in\{\pm\}$, we have
\[
    B^\sigma(X)
    \;=\;
    \frac{23     c^\sigma}
         {960}
    \sqrt{X} (\log X)^2
    \prod_p
      \Bigl(         
         \Bigl(        
           1 - \frac{1}{p}
         \Bigl)^3
         \Bigl(        
           1 + \frac{3}{p}
         \Bigl)
      \Bigl)
    \;+\;
    O(\sqrt{X} \log X),    
\]
where $c^+=1/4$ and $c^-=3/4$.
\end{thm}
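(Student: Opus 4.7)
The plan is to mirror the framework of Sections \ref{sec:parametrize}--\ref{sec:asymptotic_evaluation_of_b_h_sigma_x_g_} but with the characteristic function $\ind_G$ replaced by the constant $1$. First decompose
\[
B^\sigma(X) \;=\; \sum_{(\eps_1,\eps_2)}\sum_{h=1}^{4}\frac{|\mathcal{S}_h^{(\eps_1,\eps_2)}(X)|}{m_h},
\]
where $(\eps_1,\eps_2)$ runs over $\{(+,+)\}$ if $\sigma = +$ and over $\{(+,-),(-,+),(-,-)\}$ if $\sigma = -$. For each $h$ and sign pair, parameterize an admissible triple by $d_1 = \eps_1 \mu_h D_1 D_3$, $d_2 = \eps_2 \mu_h D_2 D_3$, $d_3 = \eps_1 \eps_2 D_1 D_2$, with $\mu_h = 1$ for $h \in \{1,3\}$, $\mu_h = 2$ for $h \in \{2,4\}$, and $D_1, D_2, D_3$ pairwise coprime positive odd squarefree integers. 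Then $\tau_h d_1 d_2 d_3 = T_h (D_1 D_2 D_3)^2$, so the discriminant bound becomes $D_1 D_2 D_3 \le Y_h := \sqrt{X/T_h}$, and the mod-$4$ congruences defining type $h$ become a sign-dependent subset $R_h(\eps_1,\eps_2) \subset ((\Z/8\Z)^\times)^3$ of admissible residues for $(D_1, D_2, D_3)$ modulo $8$.

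Next, the entire problem reduces to one asymptotic counting estimate. Since every odd squarefree $n$ has exactly $3^{\omega(n)}$ ordered factorizations into three pairwise coprime parts, the unrestricted triple count with $D_1 D_2 D_3 \le Y$ equals $\sum_{n \le Y,\,\mu^2(n)=1,\, 2\nmid n} 3^{\omega(n)}$. The associated Dirichlet series factors as $\prod_{p>2}(1+3p^{-s}) = \zeta(s)^3 E(s)$ with $E(s)$ holomorphic and nonvanishing on $\Re s \ge 1$ and $E(1) = \tfrac{2}{5}\prod_p(1-\tfrac{1}{p})^3(1+\tfrac{3}{p})$, so the Landau--Selberg--Delange method (the analytic engine behind Lemma \ref{lem:triple-char}, applied with the $k=3$ singularity of $\zeta(s)^3$) gives
\[
\sum_{\substack{n \le Y \\ 2 \nmid n,\, \mu^2(n)=1}} 3^{\omega(n)} \;=\; \frac{E(1)}{2}\,Y(\log Y)^2 \;+\; O(Y\log Y).
\]
Detecting a prescribed residue class $\vec{r} \pmod 8$ for $(D_1,D_2,D_3)$ by orthogonality of characters of $((\Z/8\Z)^\times)^3$, and bounding the non-principal contributions by the Siegel--Walfisz estimate of Lemma \ref{lem:charsum} exactly as in the proof of Proposition \ref{prop:gwx}, splits off a uniform factor $\tfrac{1}{4^3}$ from this main term.

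Summing over $\vec{r} \in R_h(\eps_1,\eps_2)$, using $Y_h(\log Y_h)^2 = \tfrac14\sqrt{X/T_h}(\log X)^2 + O(\sqrt{X}\log X)$, and dividing by $m_h$ then yields
\[
B^\sigma(X) \;=\; \frac{1}{5\cdot 2^8}\sqrt{X}(\log X)^2 \prod_p\Bigl(1-\tfrac{1}{p}\Bigr)^{\!3}\!\Bigl(1+\tfrac{3}{p}\Bigr) \sum_{(\eps_1,\eps_2)}\sum_{h=1}^{4}\frac{|R_h(\eps_1,\eps_2)|}{m_h\sqrt{T_h}} \;+\; O(\sqrt{X}\log X).
\]
A direct case analysis modulo $4$ (checking when $\eps_1 \mu_h D_1 D_3$, $\eps_2 \mu_h D_2 D_3$ and $\eps_1\eps_2 D_1 D_2$ have the congruences prescribed by type $h$) gives $|R_1(\eps_1,\eps_2)| = |R_3(\eps_1,\eps_2)| = 16$ and $|R_2(\eps_1,\eps_2)| = |R_4(\eps_1,\eps_2)| = 32$ for every sign pair, whence the inner sum equals $\tfrac{16}{6}+\tfrac{32}{16}+\tfrac{16}{8}+\tfrac{32}{32} = \tfrac{23}{3}$ independently of $(\eps_1,\eps_2)$. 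Since the outer sum contains $4c^\sigma$ terms (one for $\sigma = +$, three for $\sigma = -$), this reproduces the claimed constant $\tfrac{23c^\sigma}{960}$.

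The only analytic input beyond what the paper already sets up is the $k=3$ case of Landau--Selberg--Delange, in place of the $k=1/2$ singularity that drives Proposition \ref{prop:gwx}; this is classical. The bulk of the work is therefore the combinatorial bookkeeping of the sets $R_h(\eps_1,\eps_2)$, and the main sanity check is that the sum over $h$ comes out the same for every sign pair --- as it must, since only the trivial character contributes to the main term and the parameterization is symmetric in $(\eps_1,\eps_2)$ apart from which sign pairs lie above $\sigma = +$ versus $\sigma = -$.
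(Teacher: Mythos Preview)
Your approach is correct and essentially the same as the paper's: decompose by type $h$ and sign pair, parameterize by coprime odd squarefree $(D_1,D_2,D_3)$, detect the congruence conditions by characters, and apply Landau--Selberg--Delange to $\prod_{p>2}(1+3p^{-s})=\zeta(s)^3E(s)$. Your residue counts $|R_h|$ and the final constant match.

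Two minor points. First, the paper works modulo $4$ rather than $8$, since the type-$h$ conditions only involve residues mod $4$; your mod-$8$ counts are exactly $2^3$ times the paper's $S_h^\sigma\in\{2,4\}$, so the normalization agrees. Second, your appeal to Lemma~\ref{lem:charsum} and Proposition~\ref{prop:gwx} for the non-principal characters is misplaced: Lemma~\ref{lem:charsum} treats the summand $\mu^2(n)2^{-\omega(n)}\chi(n)\psi(n)^\delta$ and explicitly excludes characters induced by $\chi_{-1}$, neither of which fits here. What you actually need (and what the paper uses) is Lemma~\ref{lem:triple-char2}: for fixed modulus the non-principal $L(s,\chi_i)$ are entire, so LSD directly gives a lower power of $\log$. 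No Siegel--Walfisz is required.
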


\begin{proof}
  We count the fields by their type $h$; i.e., we evaluate each sum
  \[
    B_h^\sigma(X)\;=\;\frac{1}{m_h}\sum_{(d_1,d_2,d_3)\in \mathcal{S}_h^\sigma(X)}1.
  \]
  We follow the same methods as in Section \ref{sec:average}. This time we write $\vec{D}=(D_1,D_2,D_3)$ for a triple of integers. They will always be positive, odd, squarefree, and pairwise coprime, so we use the notation $\natsum_{\vec{D}}$ to denote a sum over such triples. The notation $\Delta(\vec{D})$ now denotes
  \[
    \Delta(\vec{D})\;=\;(D_1D_2D_3)^2.
  \]
  We will consider these triples modulo 4 (as opposed to modulo 8 before), so we now put $H=(\Z/4\Z)^\times$, and we write triples $\vec{w}\in H^3$ as 
  \[
    \vec{w}\;=\;(w_1,w_2,w_3).
  \]
  With this notation, we have
  \begin{equation}
  \label{bh-for-biquads}
    B_h^\sigma(X)\;=\;\frac{1}{m_h}\sum_{\vec{w}\in H^3}f_h^\sigma(\vec{w})\natsum_{\substack{\Delta(\vec{D})\le X/T_h \\ \vec{D}\;\equiv\;\vec{w}\;(4)}}1\;+\;O(\sqrt{X}),
  \end{equation}
  where
  \begin{align*}
  f_1^\sigma(\vec{w})\;&=\;\frac{1}{8}\Big(1+\eps_1\Big(\frac{-1}{w_1w_3}\Big)\Big)\Big(1+\eps_2\Big(\frac{-1}{w_2w_3}\Big)\Big)\Big(1+\eps_1\eps_2\Big(\frac{-1}{w_1w_2}\Big)\Big), \\
  f_2^\sigma(\vec{w})\;&=\;\frac{1}{2}\Big(1+\eps_1\eps_2\Big(\frac{-1}{w_1w_2}\Big)\Big), \\
  f_3^\sigma(\vec{w})\;&=\;\frac{1}{8}\Big(1-\eps_1\Big(\frac{-1}{w_1w_3}\Big)\Big)\Big(1-\eps_2\Big(\frac{-1}{w_2w_3}\Big)\Big)\Big(1+\eps_1\eps_2\Big(\frac{-1}{w_1w_2}\Big)\Big), \\
  f_4^\sigma(\vec{w})\;&=\;\frac{1}{2}\Big(1-\eps_1\eps_2\Big(\frac{-1}{w_1w_2}\Big)\Big).
  \end{align*}
  We evaluate the sum over $\vec{D}$ by using Dirichlet characters modulo 4 to detect the congruences and then applying Lemma \ref{lem:triple-char2}, which gives
  \[
    \natsum_{\substack{\Delta(\vec{D})\le X/T_h \\ \vec{D}\;\equiv\;\vec{w}\;(4)}}1\;=\;\frac{\lambda_0}{2^5}\sqrt{\frac{X}{T_h}}(\log X)^2\;+\;O(\sqrt{X}\log X),
  \]
  where $\lambda_0$ is the constant given by \eqref{lambda0}. Plugging this into \eqref{bh-for-biquads}, we get
  \[
  B_h^\sigma(X)\;=\;\frac{\lambda_0S_h^\sigma}{2^5m_h\sqrt{T_h}}\sqrt{X}(\log X)^2\;+\;O(\sqrt{X}\log X),
  \]
  where
  \[
    S_h^\sigma\;=\;\sum_{\vec{w}\in H^3}f_h^\sigma(\vec{w}).
  \]
  In fact, the sums $S_h^\sigma$ do not depend on $\sigma$: for any pair of signs $\sigma=(\eps_1,\eps_2)$, we calculate
  \[
    S_1^\sigma\;=\;S_3^\sigma\;=\;2\quad\text{and}\quad S_2^\sigma\;=\;S_4^\sigma\;=\;4.
  \]
  Summing over $1\le h\le 4$ now gives, for any $\sigma$, 
  \[
    B^\sigma(X)\;=\;\frac{23}{3840}\prod_p\Big(\Big(1-\frac{1}{p}\Big)^3\Big(1+\frac{3}{p}\Big)\Big)\cdot\sqrt{X}(\log X)^2\;+\;O(\sqrt{X}\log X).
  \]
  This asymptotic gives the result, after summing over the relevant $\sigma$. (Note again our slight abuse of notation: in the proof $\sigma$ refers to a pair of signs, whereas in the statement of the theorem, $\sigma\in\{\pm\}$ denotes whether the fields in question are totally real or totally imaginary). 
\end{proof}

\section{Lemmas on character sums}
\label{sec:lemma}

In this section we record a few estimates on certain character sums and other sums of multiplicative functions. The first of these is a bound for bilinear forms involving the Jacobi symbol. The statement is almost exactly the same as Lemma 4 in \cite{heath} (though with slightly different conditions modulo 8), and the proof given there gives this result as well.

\begin{lem}
\label{lem:bilinear}
Let $a_m,b_n$ be any complex numbers with $|a_m|,|b_n|\le1$. Let $h,k$ be given odd integers, and let $M,N,x\gg1$. Then
\[
\sum_{m}\sum_{n}\Big(\frac{n}{m}\Big)a_mb_n\;\ll\;MN\{\min(M,N)\}^{-1/32},
\]
uniformly in $x$, where the summation runs over squarefree $m,n$ satisfying $M<m\le2M$, $N<n\le2N$, $mn\le x$, $m\equiv h\;(8)$, and $n\equiv k\;(8)$. 
\end{lem}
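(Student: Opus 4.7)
My plan is to follow the classical approach for bounding bilinear sums of Jacobi symbols, essentially as in Lemma~4 of Heath-Brown~\cite{heath}. Without loss of generality I assume $M \le N$. The congruences $m \equiv h \pmod 8$ and $n \equiv k \pmod 8$ are first detected by orthogonality of Dirichlet characters modulo $8$; this introduces an absolutely bounded number of characters multiplying $a_m$ and $b_n$, which may be absorbed into the coefficients without changing their size (or the assumption $|a_m|,|b_n|\le 1$). The crossed constraint $mn \le x$ is the main nuisance: it is decoupled by partitioning the range $(M,2M]$ of $m$ into $K$ subintervals of length $\sim M/K$, where $K$ is a parameter to be optimized. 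On each subinterval the condition $mn \le x$ reduces, up to an error of $O(1)$ per subinterval in the $n$ variable, to an unconstrained cutoff $n \le n_0$ that is independent of $m$ on that subinterval. The total boundary error contributes $O(MN/K)$ to the final bound.

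On each subinterval one is then reduced to estimating a pure bilinear sum over rectangular ranges. I would apply Cauchy--Schwarz in the $m$-variable, yielding
\[
   \Big| \sum_{m}\sum_{n} \alpha_m \beta_n \Big( \frac{n}{m} \Big) \Big|^{2}
   \;\le\; \frac{M}{K} \sum_{m}^{\text{sqfree}} \Big| \sum_{n}^{\text{sqfree}} \beta_n \Big( \frac{n}{m} \Big) \Big|^{2},
\]
where the outer sum runs over the subinterval. The inner mean square can now be bounded by Heath-Brown's quadratic large sieve inequality for real characters, which gives (up to a factor $(MN)^{\varepsilon}$) an estimate of the form $(M/K + N)\,N$. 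Taking square roots and summing over the $K$ subintervals produces a bound of shape $(MN)^{1+\varepsilon} \sqrt{(M/K + N)/M}$ for the main contribution, plus the $O(MN/K)$ boundary error.

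The main obstacle is optimizing $K$: it must be large enough that the savings from the large sieve actually survive after accumulation over subintervals, yet small enough that the boundary error $MN/K$ does not swamp this saving. Balancing these competing constraints, together with the logarithmic losses inherent in the quadratic large sieve, produces a power saving $MN \cdot \min(M,N)^{-\delta}$ for an explicit $\delta > 0$. The specific exponent $\delta = 1/32$ recorded in the lemma results from the precise bookkeeping of these losses, exactly as in \cite[Lemma~4]{heath}; the move from modulus $4$ to modulus $8$ only enlarges some finite constants arising in the character-orthogonality step and does not affect the final exponent, so the argument of Heath-Brown transports verbatim.
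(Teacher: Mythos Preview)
Your proposal is correct and matches the paper's treatment: the paper does not supply a proof but simply observes that the statement is Lemma~4 of \cite{heath} with the congruence modulus shifted from $4$ to $8$, and that Heath-Brown's argument carries over unchanged. Your outline---detecting the mod~$8$ congruences by characters, subdividing $(M,2M]$ into $K$ pieces to decouple the hyperbolic constraint $mn\le x$, then applying Cauchy--Schwarz and a mean-square estimate for Jacobi symbols---is exactly Heath-Brown's proof, so there is nothing to add.
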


The
following
lemma is almost identical to Lemma 6 in \cite{heath} and the proof is completely analogous to the one given there. In the case that $\delta=0$, the proof is effectively identical after changing $4^{-\omega(n)}$ there to $2^{-\omega(n)}$ (and making the other corresponding changes as necessary). In the case that $\delta=1$, the same proof works, replacing $L(s,\chi)$ there with $L(s,\chi)L(s,\chi\chi_{-1})$ and modifying the holomorphic factor $g(s)$ as necessary.

\begin{lem}
\label{lem:charsum}
Fix $N>0$, and let $\delta=0$ or $1$. Let $q,r>0$ be positive integers, and let $\chi$ be a nonprincipal character modulo $q$ that is not induced by $\chi_{-1}$, the nonprincipal character modulo $4$. Then we have
\[
\sum_{\substack{n\le x \\ (n,r)=1}}\mu^2(n)2^{-\omega(n)}\chi(n)\psi(n)^\delta\;\ll\;x\tau(r)\exp(-c\sqrt{\log x})
\]
where $c=c_N$ is a positive constant, and this bound holds uniformly for $q\le(\log x)^N$. The implied constant above is ineffective.
\end{lem}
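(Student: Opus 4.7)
The plan is to follow the Landau--Selberg--Delange method, adapting Heath-Brown's proof of his Lemma~6 in \cite{heath} to our slightly modified setting. First, I would handle the coprimality condition $(n,r)=1$ by M\"obius inversion: writing $\mathbf{1}_{(n,r)=1}=\sum_{d\mid(n,r)}\mu(d)$ and using squarefreeness together with multiplicativity of $\mu^2(n)2^{-\omega(n)}\chi(n)\psi(n)^\delta$ to pull the contribution of $d$ outside, the problem reduces to bounding, uniformly for each $d\mid r$, a sum of the same shape (with $r$ replaced by $d$ and $x$ by $x/d$). Summing the resulting bounds over $d\mid r$ then produces the claimed factor $\tau(r)$.

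Next I would set up the Dirichlet series
\[
F(s)\;=\;\sum_{n\ge1}\frac{\mu^2(n)\,2^{-\omega(n)}\chi(n)\psi(n)^\delta}{n^s}\;=\;\prod_p\Bigl(1+\frac{c_p\chi(p)}{2p^s}\Bigr),
\]
where $c_p=1$ in the $\delta=0$ case, and $c_p=(1+\chi_{-1}(p))/2$ for odd $p$ in the $\delta=1$ case (with $c_2=0$). A term-by-term comparison of local Euler factors yields $F(s)=L(s,\chi)^{1/2}G(s)$ when $\delta=0$, and $F(s)=L(s,\chi)^{1/4}L(s,\chi\chi_{-1})^{1/4}G(s)$ when $\delta=1$, where in each case $G(s)$ is holomorphic and bounded in a half-plane $\Re(s)>\tfrac12+\varepsilon$. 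The hypotheses on $\chi$---nonprincipal and not induced by $\chi_{-1}$---guarantee that both $L(s,\chi)$ and $L(s,\chi\chi_{-1})$ are entire, so $F$ has no pole at $s=1$, only a branch-point singularity from the fractional exponent.

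The bound then follows by a truncated Perron formula together with a contour shift past $\Re(s)=1$ into a Siegel--Walfisz-type zero-free region of the form $\Re(s)>1-c'/\log(q(|t|+2))$, valid uniformly in $q\le(\log x)^N$ (this is where the hypothesis on $q$ is used). The main term vanishes thanks to the absence of a pole at $s=1$, and standard estimates on the shifted contour and the truncated horizontal sides, with contour parameter optimized against the zero-free region, yield the $x\exp(-c\sqrt{\log x})$ saving with $c=c_N>0$. The ineffectivity enters exactly as in Siegel's theorem on exceptional real zeros.

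The main obstacle is the bookkeeping around the fractional powers of $L$-functions: one must define $F(s)$ as a single-valued holomorphic function on a suitably slit region and control it on the shifted contour, which is the technical heart of the Selberg--Delange framework. Since Heath-Brown carries out the analogous argument in detail in \cite{heath} (with exponent $1/4$ in place of $1/2$, and without the $\psi$-factor), my proof would essentially amount to recording the substitutions---exponent $1/2$ on $L(s,\chi)$ in the $\delta=0$ case, and a product of two $1/4$-powers $L(s,\chi)^{1/4}L(s,\chi\chi_{-1})^{1/4}$ in the $\delta=1$ case---and verifying that neither modification disturbs the factorization $F(s)=(\text{power of }L)\cdot G(s)$ nor the subsequent contour analysis.
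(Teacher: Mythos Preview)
Your proposal is correct and follows essentially the same approach as the paper, which simply refers to Heath-Brown's Lemma~6 and records exactly the substitutions you describe: the exponent change from $4^{-\omega(n)}$ to $2^{-\omega(n)}$ in the $\delta=0$ case, and the replacement of $L(s,\chi)$ by $L(s,\chi)L(s,\chi\chi_{-1})$ (each to the $1/4$ power) in the $\delta=1$ case. One small point: your M\"obius reduction for the coprimality condition is slightly circular as written (the inner sum still carries a coprimality constraint to $d$), and it is cleaner to absorb $(n,r)=1$ directly into the Euler product, where the correction factor $\prod_{p\mid r}(1+c_p\chi(p)/2p^s)^{-1}$ is bounded by $2^{\omega(r)}\le\tau(r)$ on the shifted contour.
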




    The
  rest of the lemmas in this section have no
  direct counterpart in \cite{heath} so
we give their proofs for completeness' sake.

\begin{lem}
\label{sqfree-divisor-bound}
We have
\begin{equation}
\label{eq:sqf-div-bd}
\sum_{n\le x}\mu^2(n)2^{-\omega(n)}\tau(n)\;\ll\;x.
\end{equation}
\end{lem}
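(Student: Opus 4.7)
The plan is to observe that the summand simplifies dramatically once one exploits squarefreeness. Since $\mu^2(n)$ restricts the sum to squarefree integers, and for any squarefree $n$ one has $\tau(n) = 2^{\omega(n)}$, the factor $2^{-\omega(n)}\tau(n)$ is identically $1$ on the support of $\mu^2$. Thus the sum collapses to
\[
  \sum_{n \le x} \mu^2(n) 2^{-\omega(n)} \tau(n) \;=\; \sum_{n \le x} \mu^2(n),
\]
the classical count of squarefree integers up to $x$.

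From here, I would invoke the standard asymptotic $\sum_{n \le x} \mu^2(n) = \frac{6}{\pi^2} x + O(\sqrt{x})$ (which follows from writing $\mu^2(n) = \sum_{d^2 \mid n} \mu(d)$, interchanging summations, and estimating the resulting sum over $d$). In particular, this gives the crude bound $\sum_{n \le x} \mu^2(n) \ll x$, which is all that is needed.

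There is no real obstacle here; the content of the lemma is entirely in the combinatorial identity $\tau(n) = 2^{\omega(n)}$ on squarefree $n$, which cancels the weight $2^{-\omega(n)}$ exactly. If one wanted to avoid citing the asymptotic for squarefree integers, one could instead bound $\sum_{n \le x} \mu^2(n) \le \sum_{n \le x} 1 = \lfloor x \rfloor \le x$, giving the desired estimate directly and unconditionally.
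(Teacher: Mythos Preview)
Your proof is correct, and it is more direct than the paper's. The paper instead forms the Dirichlet series
\[
F(s)=\sum_{n\ge1}\mu^2(n)2^{-\omega(n)}\tau(n)n^{-s}=\prod_p(1+p^{-s}),
\]
writes $F(s)=\zeta(s)G(s)$ with $G(s)=\prod_p(1-p^{-2s})$ holomorphic for $\Re(s)>1/2$, and then appeals to Perron's formula to extract the bound $\ll x$. Of course the Euler product $\prod_p(1+p^{-s})$ is exactly the generating series for $\mu^2(n)$, so the two arguments are secretly the same; you have simply recognized the identity $2^{-\omega(n)}\tau(n)=1$ on squarefree $n$ at the outset rather than encoding it in an Euler factor. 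Your route avoids any complex-analytic machinery and even the trivial bound $\sum_{n\le x}\mu^2(n)\le x$ suffices, so it is the cleaner argument here.
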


\begin{proof}
We consider the Dirichlet series
\[
F(s)\;=\;\sum_{n\ge1}\mu^2(n)2^{-\omega(n)}\tau(n)n^{-s}\;=\;\prod_p(1+p^{-s}),
\]
for which we have
\[
F(s)\;=\;\zeta(s)G(s),\quad\text{where}\quad G(s)\;=\;\prod_p(1-p^{-2s}).
\]
Because $G$ is holomorphic for $\Re(s)>1/2$, a standard application of Perron's formula gives the bound \eqref{eq:sqf-div-bd}.
\end{proof}

\begin{lem}
\label{lem:triple-char}
Let $\chi_1,\chi_2,\chi_3$ be three Dirichlet characters modulo 8, and put
\[
\rho(n)\;:=\;\sum_{n=abc}\chi_1(a)\chi_2(b)\chi_3(c)\psi(a)^{\delta_1}\psi(b)^{\delta_2}\psi(c)^{\delta_3},
\]
where each $\delta_i=0$ or $1$. Define
\[
\Upsilon\;:=\;\prod_{1\le i\le3}(\ind_{\chi_i=\chi_0}+\delta_i\ind_{\chi_i=\chi_{-1}}),
\]
and put $J:=(2-\delta_1-\delta_2-\delta_3)/4$. Then we have
\begin{equation}
\label{lsd-app}
\sum_{n\le x}\mu^2(n)2^{-\omega(n)}\rho(n)\;=\;\kappa_0(\delta_1,\delta_2,\delta_3)\cdot x(\log x)^{J}\cdot\Upsilon+O(x(\log x)^{J-1}),
\end{equation}
where $\kappa_0(\delta_1,\delta_2,\delta_3)>0$ is a constant that only depends on the $\delta_i$. We will only need its value when
$
\delta_1=\delta_2=\delta_3=0
$,
which is
\begin{equation}
    \kappa_0 \;:=\;\kappa_0(0,0,0)
    \;=\;
    \frac{8}{7\sqrt{\pi}}
    \prod_p
      \Bigl(
        \Bigl(
          1 - \frac{1}{p}
        \Bigr)^{3/2}
        \Bigl(
          1 + \frac{3}{2p}
        \Bigr)
      \Bigr).                  \label{kappa-zero}
\end{equation}
Note also that in this case we have
$
\Upsilon=\prod_{i}\ind_{\chi_i=\chi_0}
$.
\end{lem}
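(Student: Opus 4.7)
The plan is to evaluate the Dirichlet series
\[
F(s)\;:=\;\sum_{n\ge1}\mu^2(n)2^{-\omega(n)}\rho(n)n^{-s}
\]
by the Landau--Selberg--Delange method. First I would note that $n\mapsto\mu^2(n)2^{-\omega(n)}\rho(n)$ is multiplicative, so $F$ factors as an Euler product. At $p=2$ the local factor is trivially $1$, since each $\chi_i$ has modulus $8$ (so $\chi_i(2)=0$) and $\psi(2)=0$. For odd $p$, only $k=1$ contributes (as $\mu^2(p^k)=0$ for $k\ge2$), and the three factorizations $p=abc$ with a single $p$ factor give
\[
F_p(s)\;=\;1+\tfrac12\, p^{-s}\bigl[\chi_1(p)\psi(p)^{\delta_1}+\chi_2(p)\psi(p)^{\delta_2}+\chi_3(p)\psi(p)^{\delta_3}\bigr].
\]
Using $\psi(p)=(1+\chi_{-1}(p))/2$ for odd $p$, I would rewrite this as $F_p(s)=1+p^{-s}\sum_{\chi\bmod 8}a_\chi\chi(p)+O(p^{-2s})$ for explicit rational coefficients $a_\chi$: each summand $\chi_i(p)\psi(p)^{\delta_i}$ contributes $1$ to the coefficient of $\chi_i$ when $\delta_i=0$, and $1/2$ each to the coefficients of $\chi_i$ and $\chi_i\chi_{-1}$ when $\delta_i=1$, with the overall $1/2$ scaling giving the $a_\chi$.

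Next I would factor $F(s)=H(s)\prod_{\chi\bmod 8}L(s,\chi)^{a_\chi}$, where $H(s)$ is the Euler product whose $p$-th factor is $F_p(s)\prod_\chi(1-\chi(p)p^{-s})^{a_\chi}=1+O(p^{-2s})$; hence $H$ converges absolutely to a holomorphic, non-vanishing function on $\Re(s)>1/2$. Among characters mod $8$, only $L(s,\chi_0)$ has a pole at $s=1$, so the order of the pole of $F$ at $s=1$ equals $\alpha:=a_{\chi_0}$. A direct calculation from the coefficient bookkeeping above gives
\[
\alpha\;=\;\sum_{i=1}^3\Bigl[\tfrac{1-\delta_i}{2}\ind_{\chi_i=\chi_0}+\tfrac{\delta_i}{4}\bigl(\ind_{\chi_i=\chi_0}+\ind_{\chi_i=\chi_{-1}}\bigr)\Bigr],
\]
which equals $(6-\sum_i\delta_i)/4=J+1$ exactly when $\Upsilon=1$, and is strictly smaller when $\Upsilon=0$.

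Invoking the Selberg--Delange theorem applied to $F(s)$ then yields
\[
\sum_{n\le x}\mu^2(n)2^{-\omega(n)}\rho(n)\;=\;\frac{C_\alpha\, x(\log x)^{\alpha-1}}{\Gamma(\alpha)}+O\bigl(x(\log x)^{\alpha-2}\bigr),
\]
with $C_\alpha=H(1)\prod_{\chi\ne\chi_0}L(1,\chi)^{a_\chi}\cdot(1/2)^\alpha$, using $\mathrm{Res}_{s=1}L(s,\chi_0\bmod 8)=1/2$. When $\Upsilon=1$ this is precisely the claimed main term with exponent $J=\alpha-1$, and the absolute value of $C_\alpha/\Gamma(\alpha)$ defines $\kappa_0(\delta_1,\delta_2,\delta_3)$. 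When $\Upsilon=0$, we have $\alpha-1<J$, so the whole asymptotic is of strictly lower order and absorbed into the stated error term.

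Finally, to verify $\kappa_0=\kappa_0(0,0,0)$ matches \eqref{kappa-zero}: here $\rho(n)=3^{\omega(n)}$ on squarefree $n$, and hence $F(s)=\prod_{p\ne 2}(1+\tfrac32 p^{-s})=L(s,\chi_0\bmod 8)^{3/2}\,G(s)$ with $G(s)=\prod_{p\ne 2}(1+\tfrac32 p^{-s})(1-p^{-s})^{3/2}$ absolutely convergent on $\Re(s)>1/2$. Combining $\Gamma(3/2)=\sqrt{\pi}/2$, the residue contribution $(1/2)^{3/2}$, and the identity $G(1)=\tfrac{8\sqrt{2}}{7}\prod_p(1-1/p)^{3/2}(1+3/(2p))$ (which inserts the missing $p=2$ Euler factor $(1+3/4)(1-1/2)^{3/2}=7/(8\sqrt{2})$) gives exactly $\kappa_0=\tfrac{8}{7\sqrt{\pi}}\prod_p(1-1/p)^{3/2}(1+3/(2p))$. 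The main obstacle is not the Selberg--Delange input, which is standard, but the careful bookkeeping of the coefficients $a_\chi$ that identifies $\Upsilon$ as the indicator for maximum pole order, together with the $p=2$ arithmetic in matching the explicit constant.
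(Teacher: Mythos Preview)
Your approach is essentially the paper's: write $F(s)=\sum \mu^2(n)2^{-\omega(n)}\rho(n)n^{-s}$ as an Euler product, peel off powers of Dirichlet $L$-functions, read the pole order at $s=1$, and invoke Landau--Selberg--Delange. The paper packages the $L$-function factors via auxiliary series $L_{\delta_i}(s,\chi_i)$ satisfying $L_{\delta_i}^2=L(s,\chi_i)L(s,\chi_i\chi_{-1}^{\delta_i})g(s)^{-\delta_i}$ and works with $F^4$, while you expand $\psi(p)=(1+\chi_{-1}(p))/2$ and collect coefficients $a_\chi$ directly; the bookkeeping is equivalent and your identification of $\alpha=a_{\chi_0}$ with $J+1$ under $\Upsilon=1$ is correct. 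Your constant computation for $\kappa_0(0,0,0)$ is also correct and matches the paper's.

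There is one genuine slip, however, and it is worth flagging even though the paper's own proof glosses over the same point. When $\Upsilon=0$ you write that the asymptotic $x(\log x)^{\alpha-1}$ is ``absorbed into the stated error term'' $O(x(\log x)^{J-1})$. That does not follow from $\alpha-1<J$: for instance with $\delta_1=\delta_2=\delta_3=0$, $\chi_1=\chi_2=\chi_0$, $\chi_3\ne\chi_0$, you get $\alpha=1$, so the sum is $\asymp x$, while $J-1=-1/2$. What is true is that when $\Upsilon=0$ the pole order drops by at least $1/4$, giving $O(x(\log x)^{J-1/4})$; this weaker bound is all that is actually needed downstream (the application in Proposition~\ref{prop:gwx} only requires an error of $O(\sqrt{x}(\log x)^{1/4})$). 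So the fix is simply to state the correct exponent in the $\Upsilon=0$ case rather than claim it fits inside $O(x(\log x)^{J-1})$.
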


\begin{proof}
  For $i=1,2,3$, define
  \[
    L_{\delta_i}(s,\chi_i)\;=\;\prod_p(1-\chi_i(p)\psi(p)^{\delta_i}p^{-s})^{-1};
  \]
  note that the product is over primes $p\ge3$ because $\chi_i$ is a character modulo 8. We have
  \[
    L_{\delta_i}(s,\chi_i)^2\;=\;L(s,\chi_i)L(s,\chi_i\chi_{-1}^{\delta_i})g(s)^{-\delta_i},
  \]
  where
  \[
    g(s)\;=\;\prod_{p\equiv3\;(4)}(1-\chi_i(p)^2p^{-2s})^{-1}\;=\;\prod_{p\equiv3\;(4)}(1-p^{-2s})^{-1}.
  \]
  Note that in this last equality we are using the fact that the $\chi_i$ are characters modulo 8, so for each $i$ we have $\chi_i^2=\chi_0$, the principal character modulo 8. We consider the series
  \[
    F(s)\;=\;\sum_{n\ge1}\mu^2(n)2^{-\omega(n)}\rho(n)n^{-s}\;=\;\prod_{p\ge3}\Big(1+\tfrac{1}{2}(\chi_1(p)\psi(p)^{\delta_1}+\chi_2(p)\psi(p)^{\delta_2}+\chi_3(p)\psi(p)^{\delta_3})p^{-s}\Big),
  \]
  and we see that
  \begin{equation}
  \label{f-g-lschi}
    F(s)^4=G(s)\prod_{1\le i\le3}L_{\delta_i}(s,\chi_i)^2,
  \end{equation}
  where $G(s)$ is holomorphic for $\Re(s)>1/2$. We now apply the Landau-Selberg-Delange method (see Chapter II.5 in \cite{tenenbaum}, for instance), which gives \eqref{lsd-app}. (Note that the term $\Upsilon$ indicates the conditions under when poles at $s=1$ appear on the right-hand side of \eqref{f-g-lschi}, and
  $
  4(J+1)
  $ is precisely the number of such poles.)

  To compute the constant $\kappa_0=\kappa_0(0,0,0)$, note that
  \[
    L_0(s,\chi_i)\;=\;L(s,\chi_0)\;=\;\prod_{p\ge3}(1-p^{-s})^{-1},
  \]
  and hence when $\delta_1=\delta_2=\delta_3=0$ we have $F(s)\;=\;(1-2^{-s})^{3/2}\zeta(s)^{3/2}G(s)^{1/4}$, where
  \[
    G(s)\;=\;\prod_{p\ge3}\Big(\Big(1-\frac{1}{p^s}\Big)^6\Big(1+\frac{3}{2p^s}\Big)^4\Big).
  \]
  It then follows from the results in Chapter II.5 in \cite{tenenbaum} that
  \[
    \kappa_0\;=\;\Gamma(\tfrac{3}{2})^{-1}\Big[\frac{(s-1)^{3/2}F(s)}{s}\Big]_{s=1}\;=\;\frac{2}{\sqrt{\pi}}\cdot\frac{G(1)^{1/4}}{2^{3/2}},
  \]
  and we see that
  $
  \kappa_0(0,0,0)
  $
  is equal to $\kappa_0$ as defined in (\ref{kappa-zero}).
\end{proof}

\begin{lem}
\label{lem:triple-char2}
Let $\chi_1,\chi_2,\chi_3$ be three Dirichlet characters modulo 4, and put
\[
\rho_1(n)\;:=\;\sum_{n=abc}\chi_1(a)\chi_2(b)\chi_3(c),
\]
and
\[
  \Upsilon_1
\;:=\;
\prod_{1\le i\le3}\ind_{\chi_i=\chi_0}.
\]
Then we have
\[
\sum_{n\le x}\mu^2(n)\rho_1(n)
\;=\;
\lambda_0x(\log x)^{2}\cdot \Upsilon_1\;+\;O(x\log x),
\]
where the constant $\lambda_0>0$ is given by
\begin{equation}
\label{lambda0}
  \lambda_0\:=\;\frac{1}{5}\prod_{p}\Big(\Big(1-\frac{1}{p}\Big)^3\Big(1+\frac{3}{p}\Big)\Big).
\end{equation}
\end{lem}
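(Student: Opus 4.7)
The plan is to follow the blueprint of Lemma \ref{lem:triple-char} almost verbatim, applying the Landau--Selberg--Delange method to the generating Dirichlet series
\[
F(s) \;=\; \sum_{n\geq 1}\mu^2(n)\rho_1(n)n^{-s}.
\]
Since $\mu^2(n)\rho_1(n)$ is multiplicative and supported on squarefree integers, and since the $\chi_i$ all vanish at $p=2$, at a prime $p\geq 3$ the only contributions come from partitions $n=abc$ in which at most one of $a,b,c$ is divisible by $p$, giving the Euler factor $1 + (\chi_1(p)+\chi_2(p)+\chi_3(p))p^{-s}$, while the $p=2$ factor is simply $1$. Hence
\[
F(s) \;=\; \prod_{p\geq 3}\Bigl(1 + \bigl(\chi_1(p) + \chi_2(p) + \chi_3(p)\bigr)p^{-s}\Bigr).
\]

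Next I would extract the singular part by comparison with $\prod_{i=1}^{3}L(s,\chi_i)$. Multiplying the Euler factor of $F(s)$ at each $p\ge 3$ by $\prod_{i=1}^{3}(1-\chi_i(p)p^{-s})$, the $p^{-s}$-coefficients cancel, leaving a factor of the shape $1 + O(p^{-2s})$. Therefore, writing
\[
F(s) \;=\; G(s)\prod_{i=1}^{3}L(s,\chi_i),
\]
the remainder $G(s)$ is given by an Euler product that converges absolutely, and is non-vanishing at $s=1$, on the half-plane $\Re(s)>1/2$.

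The main case is $\chi_1=\chi_2=\chi_3=\chi_0$, in which $L(s,\chi_0)=(1-2^{-s})\zeta(s)$ and so $F(s)=(1-2^{-s})^3\zeta(s)^3 G(s)$ has a pole of order exactly $3$ at $s=1$. The Landau--Selberg--Delange method (as in Chapter II.5 of \cite{tenenbaum}) then gives
\[
\sum_{n\le x}\mu^2(n)\rho_1(n) \;=\; \frac{1}{\Gamma(3)}\bigl[(s-1)^3 F(s)\bigr]_{s=1}\cdot x(\log x)^{2}\;+\; O\bigl(x\log x\bigr).
\]
A direct computation of the residue gives $(1/2)^3 G(1) = G(1)/8$, where $G(1)=\prod_{p\ge 3}(1-1/p)^3(1+3/p)$. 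Inserting the missing $p=2$ Euler factor $(1-1/2)^3(1+3/2)=5/16$ and dividing by $\Gamma(3)=2$ shows that the leading constant equals $\lambda_0$ as claimed. When at least one $\chi_i$ is the non-principal character $\chi_{-1}$, the corresponding $L(s,\chi_i)$ is entire and nonzero at $s=1$, so the order of the pole of $F(s)$ at $s=1$ drops to at most $2$; the Selberg--Delange estimate then bounds the sum by $O(x\log x)$, which is absorbed into the stated error term.

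The only non-routine step is verifying that the ratio $G(s)$ is holomorphic and nonvanishing on $\Re(s)>1/2$ together with keeping careful track of the $p=2$ Euler factor in converting $G(1)$ to $\lambda_0$; once those checks are in place, the Selberg--Delange machinery directly supplies both the main term and the stated error uniformly in the choice of the characters $\chi_i$.
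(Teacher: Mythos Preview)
Your proof is correct and follows essentially the same route as the paper's own argument: form the Euler product for $F(s)$, factor out $\prod_i L(s,\chi_i)$ leaving a factor $G(s)$ holomorphic for $\Re(s)>1/2$, and then apply Selberg--Delange (the paper invokes Perron's formula in the non-principal case, but this is the same mechanism). Your computation of the constant $\lambda_0$, including the bookkeeping for the $p=2$ Euler factor, matches the paper's.
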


\begin{proof}
  The proof is similar to the previous one, but simpler. We consider the Dirichlet series
  \[
  F(s)
  \;=\;
  \sum_{n\ge1}\mu^2(n) \rho_1(n) n^{-s}
  \;=\;
  \prod_{p\ge3}\Big(1+(\chi_1(p)+\chi_2(p)+\chi_3(p))p^{-s}\Big),
  \]
  and we see that
  \[
    F(s)\;=\;G(s)\prod_{1\le i\le 3}L(s,\chi_i),
  \]
  where $G(s)$ is holomorphic for $\Re(s)>1/2$. If not all of the $\chi_i=\chi_0$, then the right-hand side above has at most 2 poles at $s=1$, and an application of Perron's formula shows that
  \[
     \sum_{n\le x}\mu^{2}(n) \rho_1(n)
     \;\ll\;
     x\log x. 
  \]
  Otherwise the conditions indicated by the term $\Upsilon$ are met (in other words, all three $\chi_i=\chi_0$) and $F(s)=(1-2^{-s})^3\zeta(s)^3G(s)$, where
  \[
    G(s)\;=\;\prod_{p\ge3}\Big(\Big(1-\frac{1}{p^s}\Big)^3\Big(1+\frac{3}{p^s}\Big)\Big).
  \]
  Again, it follows from the results in Chapter II.5 of \cite{tenenbaum} that
  \[
    \sum_{n\le x}\mu^2(n) \rho_1(n)
    \;=\;
    \lambda_0x(\log x)^2\;+\;O(x\log x),
  \]
  where
  \[
    \lambda_0\;=\;\Gamma(3)^{-1}\Big[\frac{(s-1)^{3}F(s)}{s}\Big]\;=\;\frac{1}{2}\cdot\frac{G(1)}{2^3},
  \]
  which is the value of $\lambda_0$ given in \eqref{lambda0}.
\end{proof}

\section*{Appendix: Connection with Galois representations}

\renewcommand\thesubsection{A.\arabic{subsection}}

In this appendix we revisit our $Q_8$- and $D_4$-extension problems from the
point of view of Galois representations.
This discussion is not needed for and is of a different nature from the rest
of the paper, but we hope
it will help the readers better understand the motivation of our results;
it also suggests directions of further works
  and allows one to better understand the relationship between the works of Rohrlich
  \cite{rohrlich-dih-type, rohrlich-quat, rohrlich-quat2, rohrlich-dihedral}
  and this paper.

\subsection{Projective Artin representations and lifts}

Fix an algebraic closure $\ov{\mathbb Q}$ of $\Q$, and set
$
G_\Q = \gal(\ov{\mathbb Q}/\Q)
$.
Let
$
\trho: G_\Q \to PGL_2(\CC)
$
be a projective Artin representation.  Tate \cite[Theorem 4]{serre:one} proved
that any such $\tilde{\rho}$ can be lifted to a representation
$
\rho: G_\Q\to GL_n(\CC)
$,
in other words
$
\trho = \pi \, \text{\footnotesize$\circ$} \, \rho
$
where
$
\pi
$
is the quotient map
$
GL_2(\CC) \to PGL_2(\CC)
$.
Then
$
\text{image}(\rho)
$
is the central extension of
$
\text{image}(\trho)
$
by a finite cyclic group
$
C(\rho)
$:

\begin{center}
  \begin{tikzcd}
    &
    G_{\Q}
      \arrow[d, dashed,"\rho"]
      \arrow[dr, "\trho"']
    \\
    C(\rho)
      \arrow[hook, r]  \arrow[hook, d]
    &
    \text{image}(\rho)
      \arrow[r, two heads]  \arrow[hook, d]
    &
    \text{image}(\trho)  \arrow[hook, d]
    \\
    \CC^\times
      \arrow[hook, r]
    &
    GL_2(\CC)
      \arrow[r, two heads, "\pi"]
    &
    PGL_2(\CC)
  \end{tikzcd}
\end{center}

\noindent
We call
$
\#C(\rho)
$
the index of the lift
$
\rho
$.

From now on, suppose
$
\trho
$
is irreducible, i.e.~suppose it admits an irreducible lift.  This notion is
well-defined, since any two lifts differ by the twist of a linear character
$
G_\Q \to \CC^\times
$.
Then
$
\text{image}(\trho)
$
is either dihedral, $A_4$, $S_4$,  or $A_5$.
      %
      %
      %
      %
Among the irreducible
$
\trho
$,
the smallest possible image for 
$
\trho
$
is $V_4$, the Klein four-group, and for such 
$
\trho
$
the smallest possible index is $2$: the quaternion group $Q_8$ and the dihedral group
$
D_4
$
of order $8$ both admit an irreducible
representation into
$
GL_2(\CC)
$
with $V_4$ projective image.

Now let
$K$
be a biquadratic number field, viewed as the splitting field of an
irreducible
$
\tilde{\rho}_K: G_\Q\to PGL_2(\CC)
$.
Tate's theorem guarantees that
$
\tilde{\rho}_K
$
has a lift, but there need not be a lift of a given index, in particular there
need not be a lift of index
$2$.
In concrete terms, that means $K$ need not be a subfield of a $Q_8$- or $D_4$-extension of
$\Q$.
There are also biquadratic number fields that can be extended into $Q_8$- as
well as $D_4$-extensions.  When this happens, we can twist the corresponding
$Q_8$-Galois representations into $D_4$-ones, and vice
versa.
  %
Theorem \ref{thm:density} now tells us that almost all biquadratic fields, as ordered
by field discriminants, do not admit $Q_8$- or $D_4$-lifts.
%

\subsection{Future works}
    \label{sec:future}

The Galois representation point of view of our extension problem naturally
gives rise to a number of questions.  To streamline the discussion,
given a biquadratic field $K$ we define
\[
   R_K \;:=\; \{  \text{index}(\rho):   \text{$\rho$ is a lift of $\trho_K$} \},
\]
the set of indices of lifts of $\trho_K$.  We can show that  $R_K\subseteq 2\mathbb N$, the set of
positive even integers.
We also define
\[
   \iota(K)
   \;:=\;
   \min R_K
\]
which we call the \textit{minimal index} of $K$.

\begin{quest}
  \label{quest:rk}
  (density of realizable indices)
  
For a given biquadratic field $K$, is the set
$
2\mathbb N - R_K
$
finite?  In other words, does there exist a constant $c_1(K)>0$ depending only
on $K$ such that every even integer
$
\ge c_1(K)
$
is the index of a lift of $\trho_K$?
\end{quest}

\begin{quest}
  \label{quest:cup}
  (uniform version of Question \ref{quest:rk})

  As $K$ runs through all biquadratic fields, is the set
$
\bigcup_K (2\mathbb N - R_K)
$
finite?  In other words, does there exist an absolute constant
$
c_2>0
$
such that for every biquadratic field $K$, every even integer
$
\ge c_2
$
is the index of a lift of $\trho_K$?
\end{quest}

\begin{quest}
          \label{quest:cap}
          (a different uniform question)

%
%
Does there exist an absolute constant
$
c_3>0
$
such that every biquadratic field $K$ has a lift with index
$
\le c_3
$?
Equivalently, is the set
        $
        \{ \iota(K):  \text{$K$ biquadratic} \}
        $
bounded?
\end{quest}

\begin{quest}
  \label{quest:exist}
  As $K$ runs through all biquadratic fields, is the set
  $
  \bigcap_K (2\mathbb N - R_K)
  $
  empty?  In other words, given
$
m\in 2\mathbb N
$,
does there exist a biquadratic field with index $m$?
\end{quest}

Given an even integer $n$, write
$
n = n_1 2^k
$
with $n_1$ odd and $k\ge 1$.  We can show that if a biquadratic field has a
lift of index $n$, then it has a lift of index
$
2^k
$.
So the issue really comes down to the existence of lifts of $2$-power index.
Using the computer algebra system {\tt magma}
    \cite{magma} in conjunction with the
  LMFDB \cite{lmfdb},
  for each
$
m \in \{2, 4, 8\}
$
we have
found
examples of biquadratic fields with a lift of index $m$ but not of index $2$.
This raises
the possibility that the answers to the questions above could
be false.   In view of this, we can study \textit{asymptotic} forms of
these questions.

  \begin{quest}
     \label{quest:size}

(a) What upper bounds can one establish for the size of
$
\{ m\in 2\mathbb N:  m\in R_K \}  
$
and
$
\iota(K)
$
in terms of the discriminant of $K$?

(b)
As before, let
  $
  \mathcal{B}(X)
  $
denote the set of biquadratic fields of discriminant $\le X$.
What are the asymptotic behaviors of the sums
 \[
     \sum_{\substack{K\in \mathcal{B}(X)\\ \iota(K) = m }} 1
         \qquad
         \text{and}
         \qquad
     \sum_{K\in \mathcal{B}(X)} \iota(K)?
 \]
Note that our Theorem \ref{thm:density} gives an asymptotic for the first of these sums in the case $m=2$. 
\end{quest}

\begin{remm}
Let $K$ be a biquadratic field with discriminant
$
\Delta
$.
By the local analysis in \cite[p.~229]{serre:one}, the associated projective
representation
$
\trho_K
$
has a lift $\rho_K$ ramified at the same set of primes as $\trho_K$, such that
for each ramified prime 
$
\ell > 3
$,
the local character
$
\det \rho\bigr|_{D_\ell}
$
has order
$
\ll \ell^2
$.
Additionally, for $\ell\le 3$,  there are a bounded number of possibilities for
$
\det \rho\bigr|_{D_\ell}
$
for this minimal lift $\rho$.   Thus $\trho_K$ has a lift with index
$
\ll \prod_{\ell | \Delta} \ell^2 \ll \Delta
$; in other words, $\iota(K)\ll\Delta$. 
This gives an answer to the second part of Question
\ref{quest:size}(a).
Is it possible to sharpen this bound to, say, $\ll (\log \Delta)^A$ for some constant $A>0$?
\end{remm}

\if 3\
{
  , in which case we should modify question \ref{quest:three}
  and ask for an \textit{asymptotic upper} for
  $
  \iota(K)
  $
  in terms of the discriminant of $K$, and we should modify question
  \ref{quest:one} by asking for an upper bound of
  $
  \iota(K)
  $
  in terms of the discriminant of $K$.
 Last but not least, it is natural
  to ask if questions \ref{quest:one}-\ref{quest:three} are true for
  almost all biquadratic fields.
  }
\fi

So far we have been discussing \textit{quantitative} aspects of indices.
Next we turn to algebraic questions.

\begin{quest}
  \label{quest:index}

(a) Let $K$ be a biquadratic field $K$.  For every positive integer multiple
$m$ of
  $
  \iota(K)
  $,
does $\trho_K$ have a lift of index $m$?

(b) Suppose a biquadratic field $K$ has a lift of index $m$ and another lift of
index $n$. Does
$
\trho_K
$
necessarily have a lift of index $\gcd(m, n)$?
\end{quest}

  Kiming et al. have undertaken careful study of liftings of
  $2$-dimensional projective Galois representations
  \cite{kiming, kiming-2004}.
  In particular, \cite{kiming-2004} gives precise control over the
  \textit{local lifts} of projective dihedral representations.  Translating
  these local results to bounds on the \textit{global} index requires
  additional work.
We are currently investigating these questions; we will report on our results
  in another paper.

Last but not least, we can also study lifts of non-dihedral projective
Artin representations.

\begin{quest}
            \label{quest:exotic}
  (a)
  There are two possible index $2$ lifts for $S_4$-Artin representations.
  Is there an analog of Theorem \ref{thm:density} for these
  $S_4$-lifts?

  (b)
  There is only one possible index $2$ lift for $A_4$- and respectively
  $A_5$-projective Artin
  representations; what is the asymptotic density of such lifts?
\end{quest}

Let $K/\Q$ be a quartic (resp.~quintic) field with Galois group $A_4$ or
  $S_4$
(resp.~$A_5$).  Serre \cite{serre:traceform} has derived criteria for the splitting
field of $K$ to admit each of the index $2$ lifts in Question \ref{quest:exotic}.
On the other hand,
Bhargava \cite{bhargava} has proved an asymptotic formula for the number
of
$S_4$-quartic fields of bounded discriminants and with given signature.  It would be
very interesting to see if we can apply Serre's criterion in the context of Bhargava's enumerative techniques to study the
density of $S_4$-quartic fields with index $2$ lifts.

\if 3\
{
A major obstacle to studying index $2$ lifts of $A_4$- and $A_5$-quintic fields is
that at present there is no
  asymptotic formula for the number of $A_4$ or $A_5$ fields
  of bounded discriminants.
Since almost all integer polynomials of degree $n$ have $S_n$-Galois
  group, in the case of $S_4$-lifts, a possible substitute would be to
  work with degree $4$ monic integer polynomials of bounded height.
}
\fi
We will address these and other questions in another paper.


\subsection{Relationship with the works of Rohrlich}
So far we have been focusing on the index, a group theoretic invariant of lifts.
Artin representations have another important invariant called
\emph{Artin conductors}; they are defined in terms of ramification groups of the
splitting fields of the representations and play crucial roles in algebraic
number theory. In a series of
papers
   \cite{rohrlich-dih-type, rohrlich-quat, rohrlich-quat2, rohrlich-dihedral},
Rohrlich
investigates quantitative questions about 
irreducible Artin representations of bounded Artin conductor that are lifts
of dihedral projective Artin
representations.
We will now state the results of Rohrlich and explore the relationships of these results to ours.

For any integer $m\ge 2$, denote by
\[
   Q_{4m}
   =
   \myvec{ a, b:  a^{2m} = 1, a^m = b^2, bab^{-1} = a^{-1} }
\]
the generalized
quaternion
    group\footnote{The name of these groups and the notation
                   $
                   Q_{4m}
                   $
                   are taken verbatim from \cite{rohrlich-quat}.  Rohrlich also
                   notes that a more common name for these groups are
                   \textit{dicyclic groups} (notation:  
                   $
                   \text{Dic}_m
                   $),
                   and that in many group theory texts, the name
                   \textit{generalized quaternion groups} is reserved for
                   $
                   \text{Dic}_m
                   $
                   only when $m>1$ is a power of
                   $2$.
                   We stick to Rohrlich's name of the groups $Q_{4m}$ as these
                   groups arise in this paper only in the context of comparing
                   his works with ours.
         }
of
order $4m$; for $m=2$ this is the usual quaternion group of order
$8$.
It is not isomorphic to the dihedral group
$
D_{2m}
$,
since $b^2$ is the unique involution of $Q_{4m}$ \cite[p.~63]{feit}.  However,
$
D_{2m}
$
and
$
Q_{4m}
$
have the same character table \cite[p.~64]{feit}; in particular, the
irreducible, non-linear characters of
$
Q_{4m}
$
all have degree $2$.  These degree $2$ characters fall into two
types \cite{group}:
  there are the ones that factor through 
$
Q_{4m}/Z(Q_{4m})
$,
which is a dihedral group; in particular, these quotient characters are all
orthogonal.  The remaining degree $2$ characters of
$
Q_{4m}
$
are faithful and quaternionic.

We say that 
$
\rho: G_\Q \to GL_2(\mathbb C)
$
is dihedral (resp.~quanternionic) if its image is a dihedral group
(resp.~a generalized quaternion group).  Using Siegel's asymptotic class number
formulas, Rohrlich proves that
\cite{rohrlich-dih-type}
\begin{equation}
  \Bigl(
    \begin{array}{l}
      \text{number of dihedral $\rho$ with}
      \\
      \text{Artin conductor $N(\rho) \le Y$}
    \end{array}
  \Bigr)
  \;\sim\;
  \frac{\pi}{36 \zeta(3)} Y^{3/2}.
                  \label{dih}
\end{equation}
Additionally, for every
$
\varepsilon > 1/4\sqrt{e}
$,
he proves that   \cite{rohrlich-quat} 
\begin{equation}
  Y^{1-\varepsilon}
  \;\ll_{\varepsilon}\;
  \Bigl(
    \begin{array}{l}
      \text{number of quaternionic $\rho$ with}
      \\
      \text{Artin conductor $N(\rho) \le Y$}
    \end{array}
  \Bigr)
  \;\ll\;
  \frac{Y}{\log Y}.
                  \label{quat}
\end{equation}
The projective representation
$
\trho
$
associated to a dihedral or quaternionic $\rho$ is biquadratic.  Conversely,
given a biquadratic field of discriminant
$
\Delta
$,
the corresponding projective representation has a lift with Artin conductor
dividing
$
c_0 \Delta^2
$
for some absolute constant
$
c_0>0
$
\cite[p.~229]{serre:one}.  Setting
$
Y = c_0 \Delta^2
$
in (\ref{dih}) and (\ref{quat}) then gives an upper bound for the number of
biquadratic fields of bounded discriminant that admit $D_4$- and respectively
$Q_8$-extensions.  This upper bound is weaker than Theorem \ref{thm:density},
since (\ref{dih}) and (\ref{quat}) impose no restriction on the index of the
lifts. Thus Rohrlich's  results do not replace our results, and vice versa.

\if 3\
{
Rohrlich's works do have connections with the questions raised in section
\ref{sec:future}.
We noted earlier than  every biquadratic field of discriminant
$
\Delta
$
has a lift $\rho$ with Artin conductor dividing
$
c_0 \Delta^2
$.
}
\fi

\if 3\
{
If we denote
by
$
S(\trho)
$
and
$
S(\rho)
$
the set of primes that ramify in $\trho$ and $\rho$, then
$
S(\trho) \subseteq S(\rho)
$,
and the discriminant of the splitting field
$
K_\trho
$
of
$
\trho
$
is a bounded power of
$
\prod_{\ell\in S(\trho)} \ell
$.
That means this biquadratic field discriminant is bounded by a
power of
$
N(\rho)
$.
Consequently, the biquadratic fields with $D_4$- and respectively $Q_8$-lifts
in theorem \ref{thm:density} are contained in the set

To understand the connection between  Rohrlich's estimates with our theorem
\ref{thm:density}, we need to relate Artin conductor with biquadratic

Conversely, for any biquadratic
$
\trho
$,
there exists a lift
$
\rho_0: G_\Q\rarr GL_2(\mathbb C)
$
\[
   N(\rho_0) = \min\{ N(\rho):  \text{$\rho$ is a lift of $\trho$} \}.
\]
This implies that  $\rho_0$ is ramified precisely at the ramified primes
of
$
\trho
$.
This minimal conductor can be determined entirely from
$
\trho
$:
For any ramified prime $\ell >3$, the exponent of $\ell$ in
$
N(\rho)
$
is either $1$ or $2$, depending on whether the decomposition group of $\ell$
in
$
\trho
$
is cyclic or dihedral \cite[p.~229]{serre:one}. The case $\ell\le 3$ is a
finite calculation; in particular, there are finitely many possibilities.
It follows that given a biquadratic field of discriminant
$
\Delta
$,
there exists a lift with Artin conductor
$
\ll \Delta^2
$.
}
\fi

\begin{ack}
The authors are grateful to Professors Robert J. Lemke Oliver and Igor Shparlinski for their careful reading of a draft, for making the authors aware of several relevant references, and for insightful discussions.
\end{ack}

\bibliographystyle{amsalpha}

\vfill
\hrule

\end{document}